\documentclass[a4paper,11pt]{amsart}

\usepackage[utf8]{inputenc}

\usepackage{amsmath}
\usepackage{amsthm} 
\usepackage{amssymb} 
\usepackage{amsfonts}
\usepackage{enumerate,amscd,amsxtra,MnSymbol}
\usepackage{mathrsfs}
\usepackage{bbm}

\usepackage{hyperref} 
\usepackage[capitalise]{cleveref} 

\usepackage{geometry}
\usepackage{color}
\usepackage[cmtip,all]{xy}
\usepackage{tikz} 
\usepackage{tikz-cd} 
\usepackage{tikz-qtree} 
\usepackage{forest}
\usetikzlibrary{arrows,calc}

\usepackage{graphicx}
\usepackage{xparse}

\usepackage[normalem]{ulem}

\setcounter{tocdepth}{1}

\newenvironment{tz}{\begin{center}\begin{tikzpicture}}{\end{tikzpicture}\end{center}}

\tikzstyle{d}=[double distance=.3ex]
\tikzstyle{w}=[preaction={draw=white, -,line width=4pt}]
\NewDocumentCommand{\punctuation}{ m m O{5pt} }{\node at ($(#1.east)-(0,#3)$) {#2};}

\newcounter{diagram}
\renewcommand{\thediagram}{\thetheorem}

\tikzset{%
node distance=1.5cm, la/.style={scale=0.8}, rr/.style={xshift=1.5cm},
space/.style={xshift=.5cm}, over/.style={auto=false,fill=white,inner sep=1.5pt, minimum size=0, outer sep=0},
    symbol/.style={%
        draw=none,
        every to/.append style={%
            edge node={node [sloped, allow upside down, auto=false]{$#1$}}},
            
    }, pro/.style={postaction={decorate,decoration={
        markings,
        mark=at position .5 with {\node at (0,0) {$\bullet$};}
      }},
      inner sep=.9ex,
      },
  n/.style={double equal sign distance, -implies}, t/.style={double distance=2.5pt, -implies, postaction={draw,-}},
}

\usepackage{mathtools}

\theoremstyle{definition}


\newtheorem{theorem}{Theorem}[section]
\newtheorem*{theorem*}{Theorem}
\newtheorem{lemma}[theorem]{Lemma}
\newtheorem{proposition}[theorem]{Proposition}
\newtheorem*{proposition*}{Proposition}
\newtheorem{corollary}[theorem]{Corollary}
\newtheorem*{corollary*}{Corollary}

\newtheorem{construction}[theorem]{Construction}
\newtheorem{example}[theorem]{Example}

\newtheorem{definition*}{Definition}

\newtheorem{conjecture*}{Conjecture}

\newtheorem{notation}[theorem]{Notation}


\newtheorem{remark}[theorem]{Remark}
\newtheorem{remark*}{Remark}

\newtheorem{definition}[theorem]{Definition}

\newtheorem{introtheorem}{Theorem}

\newcommand{\caR}{{\mathcal R}}
\newcommand{\caC}{{\mathcal C}}
\newcommand{\caD}{{\mathcal D}}
\newcommand{\caB}{{\mathcal B}}
\newcommand{\caA}{{\mathcal A}}
\newcommand{\caE}{{\mathcal E}}

\newcommand{\caM}{{\mathcal M}}

\newcommand{\caS}{{\mathcal S}}
\newcommand{\caO}{{\mathcal O}}
\newcommand{\caH}{{\mathcal H}}

\newcommand{\caP}{{\mathcal P}}

\newcommand{\caQ}{{\mathcal Q}}

\newcommand{\caW}{{\mathcal W}}
\newcommand{\caF}{{\mathcal F}}
\newcommand{\caV}{{\mathcal V}}

\newcommand{\Mor}{{\mathrm{Mor}}}

\newcommand{\bE}{{\mathbb{E}}}

\newcommand{\fib}{\mathsf{fib}}

\newcommand{\op}{\mathsf{op}}
\newcommand{\dual}{\vee}

\newcommand{\Hom}{\mathsf{Hom}}

\newcommand{\Sp}{\mathsf{Sp}}

\newcommand{\X}{\mathrm{X}}

\newcommand{\GW}{\mathsf{GW}}

\newcommand{\Sym}{\mathsf{Sym}}

\newcommand{\id}{\mathsf{id}}

\newcommand{\Cat}{\mathsf{Cat}}

\newcommand{\Set}{\mathsf{Set}}

\newcommand{\map}{\mathrm{map}}

\newcommand{\Spc}{\mathsf{Spc}}

\newcommand{\Add}{\mathsf{Add}}
\newcommand{\St}{\mathsf{St}}
\newcommand{\Mon}{\mathsf{Mon}}

\newcommand{\Fun}{\mathsf{Fun}}

\newcommand{\Grp}{\mathsf{Grp}}

\newcommand{\Fin}{\mathsf{Fin}}

\newcommand{\Ar}{\mathsf{Ar}}

\newcommand{\Tw}{\mathsf{Tw}}

\newcommand{\Span}{\mathsf{Span}}
\newcommand{\KR}{\mathsf{KR}}

\newcommand{\Wald}{\mathsf{Wald}}

\newcommand{\gd}{\mathsf{gd}}
\newcommand{\Exact}{\mathsf{Exact}}
\renewcommand{\d}{\mathsf{d}}

\newcommand{\ev}{\mathsf{ev}}

\newcommand{\slice}[2]{{#1}_{/{#2}}}


\title{An equivalence between two frameworks for real algebraic $K$-theory}

\author[H.\ Heine]{Hadrian Heine}
\address{Max Planck Institute for Mathematics, Bonn, Germany}
\email{heine@mpim-bonn.mpg.de}

\author[M.\ Spitzweck]{Markus Spitzweck}
\address{Fakult\"at f\"ur Mathematik, Universit\"at Osnabr\"uck, Osnabr\"uck, Germany}
\email{markus.spitzweck@uni-osnabrueck.de}

\author[P.\ Verdugo]{Paula Verdugo}
\address{Max Planck Institute for Mathematics, Bonn, Germany}
\email{verdugo@mpim-bonn.mpg.de}

\begin{document}

\begin{abstract}
We prove an equivalence between the real $K$-theory genuine $C_2$-spectra of \cite{Calmes_etal2} for Poincar\'e $\infty$-categories and the one of \cite{realKthHSV} for Waldhausen $\infty$-categories with genuine duality.
\end{abstract}

\maketitle

\tableofcontents

\section{Introduction}
Hermitian $K$-theory originated from the problem of classifying hermitian forms and became a powerful invariant connecting different areas of mathematics, such as algebraic surgery theory, arithmetic geometry, and motivic homotopy theory. In the classical approaches to hermitian $K$-theory it was necessary to 
assume that 2 is invertible in the ring. This condition guarantees that hermitian and quadratic forms over the ring are equivalent, which was crucial to make hermitian $K$-theory well-behaved and amenable for computations. For example \cite{schlichting2017hermitian} achieved a nonconnective delooping of the Grothendieck-Witt space to a Grothendieck-Witt spectrum only if 2 is invertible in the ring.
Hornbostel \cite{Hornbostel} proved that hermitian $K$-theory is representable by a motivic spectrum.

This was a motivation to look for new foundational frameworks for hermitian $K$-theory that are well-behaved even when 2 is not invertible in the ring. Recently, several new such frameworks for hermitian $K$-theory have been developed in different contexts. Namely, that of Calm\`es-Dotto-Harpaz-Hebestreit-Land-Moi-Nardin-Nikolaus-Steimle
 \cite{Calmes_etal1,Calmes_etal2,Calmes_etal3}, that of Schliching \cite{schlichting1}, and that of the authors of this work \cite{realKthHSV}. 

In \cite{Calmes_etal1,Calmes_etal2,Calmes_etal3} they develop a framework in the realm of stable $\infty$-categories. Here the input objects are Poincar\'e categories, that is, stable $\infty$-categories equipped with a quadratic functor\textemdash a categorification of quadratic form proposed by Lurie \cite{lurie.L4}. A notable feature of this theory is that it builds a bridge between hermitian $K$-theory and $L$-theory
and refines work of Ranicki \cite{ranicki1980algebraic} in algebraic surgery theory.

\cite{schlichting1} builds a theory in the setting of so called
form categories with weak equivalences, exact categories with weak equivalences and duality equipped with a suitable quadratic structure.
In \cite{realKthHSV} we develop a framework in the setting of Waldhausen $\infty$-categories in the sense of Barwick \cite{barwick.wald}.
We construct real $K$-theory for Waldhausen $\infty$-categories with genuine duality\textemdash that is, Waldhausen $\infty$-categories equipped with a
duality and compatible quadratic data.

Marlowe-Schlichting compare in \cite{Schlichting2} the
framework of \cite{schlichting1} with the one of Calm\`es et al.
In this note, we provide a comparison between our framework and the one of
Calm\`es et al.

In \cite{Calmes_etal1,Calmes_etal2,Calmes_etal3} they construct hermitian $K$-theory via a hermitian $Q$-construction \cite{Calmes_etal2} that turns a Poincar\'e $\infty$-category to a complete Segal object of Poincar\'e $\infty$-categories. 
In our approach \cite{realKthHSV} we use a real $S$-construction,
a variant of the hermitian $S$-construction of Schlichting \cite[2.10]{schlichting.mv}
and of the real Segal construction of Hesselholt-Madsen \cite[Definition 5.1.]{hesselholt-madsen}. Our $S$-construction turns a Waldhausen $\infty$-category with genuine duality to a real Segal object of Waldhausen $\infty$-categories with genuine duality.

In this note we compare these two constructions; our main result is as follows.

\begin{introtheorem}[\cref{theorem_compsp}]\label{thm_spectra_intro}
For every stable $\infty$-category $\caC$ with genuine duality and associated Poincar\'e $\infty$-category $\caC'$
there is an equivalence of genuine $C_2$-spectra 
$$\KR(\caC) \to \KR'(\caC')$$
between the real $K$-theory spectrum of \cite[Definition 9.21]{realKthHSV} and the real $K$-theory spectrum of \cite[Definition 4.5.1]{Calmes_etal2}.
\end{introtheorem}

The seed for the comparison between the aforementioned frameworks was presented in \cite[Section 6.3]{realKthHSV}. There we establish an equivalence of $\infty$-categories that can be summarized in the  following sentence:

\vspace{0.5em}
\begin{center}
\emph{Stable $\infty$-categories with genuine duality are precisely quadratic functors.}
\end{center}
\vspace{0.5em}

As the first step we extend the hermitian $Q$-construction of \cite{Calmes_etal2} to the realm of Waldhausen $\infty$-categories with genuine duality (\cref{hermq}) and compare it to our real version of $S$-construction \cite{realKthHSV}. This comparison is the content of \cref{comp}, presented below.

\begin{introtheorem}[\cref{comp}]\label{theorem_SQ} 
Let $\caC$ be a Waldhausen $\infty$-category with genuine duality. The map $$\theta_\caC\colon S(\caC)\circ e \to \caQ(\caC)$$ of simplicial Waldhausen $\infty$-categories with genuine duality is an equivalence.
\end{introtheorem}

Since the Grothendieck-Witt spaces of \cite{Calmes_etal2} and \cite{realKthHSV} are constructed via the hermitian $Q$-construction and real $S$-construction, respectively, we obtain the following result.

\begin{introtheorem}[\cref{cor:Ktheory_coincide}]\label{theorem_GW}
Let $\caC$ be a stable $\infty$-category with genuine duality identified with a Poincar\'e $\infty$-category. There is a canonical equivalence
between the Grothendieck-Witt spaces of \cite{Calmes_etal2} and \cite{realKthHSV}.
\end{introtheorem}

We use \cref{theorem_GW} as a stepping stone for our main result, \cref{thm_spectra_intro}. In an earlier version of this note \cite{comparisonKth}, we had claimed an equivalence between \emph{real} $K$-theory spaces; our proof at the time was insufficient for that, but sufficient to obtain this equivalence between hermitian $K$-theory spaces as stated\textemdash the originally claimed equivalence between real $K$-theory spaces is now a direct consequence of \cref{thm_spectra_intro}. In the interim, \cref{theorem_GW} was independently proven by Calm\`es et al. and is now found in \cite[Proposition B.1.2]{Calmes_etal2}.

\subsection*{Acknowledgments}
The third named author gratefully acknowledges the support of an international Macquarie University Research Excellence Scholarship during part of the development of this work. The authors also thank the Max Planck Institute for Mathematics in Bonn for its hospitality for a visit of the first two named authors to the third.

\section{Background}

This work is not meant to be self-contained. We do, however, recall some of the definitions and constructions that are used and provide references for other results. Throughout this document, we add the word ``real'' to mean $\Spc^{C_2}$-enriched.

\subsection{Input}

In this section we will recall the definition of Waldhausen $\infty$-categories with genuine duality, which will be the input of our real K-theory functor.

\subsection*{Waldhausen $\infty$-categories with duality}\label{subsec:wald_duality}

In this subsection we make a brief recap of Waldhausen $\infty$-categories as presented by Barwick (see \cite{barwick.wald} for details), and define exact $\infty$-categories (\cref{def:exact_infty_cats}) which will easily encode (plain) duality.

\begin{definition}
A Waldhausen $\infty$-category is a pair $(\caD, \caC) $ with $\caD$ an  $\infty$-category with zero object and $\caC$ a subcategory of $\caD$, whose morphisms are called cofibrations, subject to the following conditions.

\begin{itemize}
\item Every object $X$ of $\caD$ is cofibrant, i.e. the unique map $0 \to X$ is a cofibration.
	
\item The cobase change in $\caD$ of any cofibration exists and is again a cofibration.
\end{itemize}

A map of Waldhausen $\infty$-categories $(\caD, \caC) \to (\caD', \caC') $
is a functor $\caD \to \caD'$ that preserves the initial object, cofibrations and pushouts along cofibrations.
\end{definition}

We will denote by $\Wald_\infty \subset \Fun([1], \Cat_\infty)$ the subcategory spanned by the Waldhausen $\infty$-categories and maps between them as defined above. 

\begin{remark}
The axioms of a Waldhausen $\infty$-category imply that every equivalence is a cofibration.    
\end{remark}

It will relevant to note that the notion of Waldhausen $\infty$-category is not self dual. We call a pair $(\caC,\caF)$ a co-Waldhausen $\infty$-category if $(\caC^\op,\caF^\op)$ is a Waldhausen $\infty$-category. Next we define exact $\infty$-categories, first presented in this in \cite{barwick.exact}.

\begin{definition}Let $(\caD,\caC,\caF)$ be a triple where $\caD$ is an $\infty$-category and $\caC,\caF$ are subcategories, whose morphisms we call cofibrations and fibrations respectively. A commutative square $\sigma$ in $\caD$
\[
\begin{tikzcd}
A \ar[r, "f"] \ar[d, "\alpha"] & B \ar[d, "\beta"] \\
C \ar[r, "g"]  & D
\end{tikzcd}
\]

\begin{itemize}
\item is an ambigressive pushout square if $\sigma$ is a pushout square, $f$ is a cofibration and $\alpha$ is a fibration, and 
\item an ambigressive pullback square if $\sigma$ is a pullback square, $g$ is a cofibration and $\beta$ is a fibration.
\end{itemize}

We call a square as above exact when is both an ambigressive pushout and an ambigressive pullback.
\end{definition}

\begin{notation}
Let $\mathrm{Amb}(\caD, \caC, \caF) \subset \Fun([1]\times[1], \caD)$ be the full subcategory of exact squares.
    
\end{notation}

\begin{definition}\label{def:exact_infty_cats}
An exact $\infty$-category is a triple $(\caD, \caC, \caF)$ with $\caD$ an additive $\infty$-category and $\caC, \caF$ subcategories of $\caD$, whose morphisms are called respectively cofibrations and fibrations, subject to the following conditions.

\begin{enumerate}
\item The pair $(\caD, \caC) $ is a Waldhausen $\infty$-category.
\item The pair $(\caD, \caF) $ is a co-Waldhausen $\infty$-category.
\item A commutative square in $\caD$ is an ambigressive pushout square if and only if it is an ambigressive pullback square.
\end{enumerate}
\end{definition}

In light of condition 3, in an exact $\infty$-category we call ambigressive pushout squares respectively ambigressive pullback squares simply ambigressive squares.

\begin{definition}
Given exact $\infty$-categories $(\caD, \caC, \caF), (\caD', \caC', \caF') $
a functor $F\colon \caD \to \caD'$ is called exact (with respect to the exact structures on $\caC$ and $\caD$) if $F$ preserves the zero object, cofibrations, fibrations, pushouts along cofibrations and pullbacks along fibrations.
\end{definition}

Denote by $\Exact_\infty \subset \Fun(\Lambda^2_2, \Cat_\infty)$ the subcategory spanned by the exact $\infty$-categories and exact functors.

\begin{remark} The non-trivial $C_2$-actions on $\Lambda^2_2$ and $\Cat_\infty$ induce a 
$C_2$-action on the functor $\infty$-category 
$\Fun(\Lambda^2_2, \Cat_\infty)$ that restricts to $\Exact_\infty$. 
\end{remark}

\begin{definition}
We define the $\infty$-category of small Waldhausen $\infty$-categories with duality as the homotopy fixed points of the action on $\Exact_\infty$ described above. We write $$\Wald_\infty^\d\coloneqq \Exact_\infty^{hC_2}.$$
\end{definition}

In other words, a Waldhausen $\infty$-category with duality is a Waldhausen $\infty$-category, whose underlying $\infty$-category carries a duality such that the cofibrations together with the opposites of the cofibrations form an exact $\infty$-category. 

Relevant examples of these kind of categories are given by stable $\infty$-categories with duality.

\subsection*{Waldhausen $\infty$-categories with genuine duality}

\begin{definition}\label{Waldgd}
A small Waldhausen $\infty$-category with genuine duality is 
a pair $(\caE, \phi)$, with $E$ a small Waldhausen $\infty$-category with duality
and $\phi\colon  H \to \caH(\caE)$ a right fibration enjoying the following conditions.

\begin{enumerate}
\item $(\caE, \phi)$ is an additive $\infty$-category with genuine duality.
\item For every commutative square
\begin{equation*}
\begin{tikzcd}
A \ar[r, "f"] \ar[d, "\alpha"']         &B \ar[d, "\beta"] \\
	C \ar[r, "g"']                      &D
\end{tikzcd}
\end{equation*}
in $\caH(\caE)$ lying over a pushout square in $\caE$ such that $f$ lies over a cofibration, the induced square of spaces
\[
\begin{tikzcd}
H(D) \ar[r] \ar[d]          &H(B) \ar[d] \\
H(C) \ar[r]                 &H(A)
\end{tikzcd}
\]
is a pullback square.
\end{enumerate}
\end{definition}

Denote by $\Wald_\infty^\gd \subset \Wald_\infty^\d \times_{\Add^{hC_2}} \Add^\gd$ the full subcategory spanned by the small Waldhausen $\infty$-categories with genuine duality. Here $\Add^\gd$  denotes the $\infty$-category of additive $\infty$-categories with genuine duality, see \cite[Section 5.2]{realKthHSV}.

\begin{example}
Every stable $\infty$-category with genuine duality is a Waldhausen $\infty$-category with genuine duality, where every map is a cofibration. This example is explored in more detail in \cite[Section 6]{realKthHSV}.
\end{example}

Before moving towards the construction of the real $S_\bullet$-construction, we introduce the following notation that will be useful throughout the paper. 

\begin{notation}\label{not:tilde}Let $\caC$ be a $\Spc^{C_2}$-enriched $\infty$-category that admits cotensors with $C_2$, and $Z \in \caC$, then we denote the cotensor $Z^{C_2}$ by $\widetilde{Z \times Z}$. An explanation of this choice is found after \cite[Notation 3.9]{realKthHSV}.
\end{notation}

\begin{proposition} The $\infty$-category $\Wald_\infty^\gd$ is a real $\infty$-category and admits cotensors with $C_2$.
\end{proposition}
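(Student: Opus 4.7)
The plan is to assemble the real structure and cotensors on $\Wald_\infty^\gd$ from those already available on the components of its ambient fiber product $\Wald_\infty^\d \times_{\Add^{hC_2}} \Add^\gd$, and then verify that the defining axioms of Definition \ref{Waldgd} are closed under cotensors with $C_2$.

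First I would recall that $\Wald_\infty^\d = \Exact_\infty^{hC_2}$ is a homotopy fixed point $\infty$-category of the $C_2$-action on $\Exact_\infty$ introduced in the remark preceding \cref{Waldgd}. Since this action arises from dualities on $\Lambda^2_2$ and $\Cat_\infty$, and since $\Exact_\infty$ admits finite products preserved by the action, $\Wald_\infty^\d$ inherits a canonical real structure admitting cotensors with $C_2$, given concretely by the hyperbolic construction $(\caD,\caC,\caF) \mapsto (\caD \times \caD^\op,\, \caC \times \caF^\op,\, \caF \times \caC^\op)$ equipped with the swap $C_2$-action. By \cite[Section 5.2]{realKthHSV}, the same holds for $\Add^\gd$ and the target $\Add^{hC_2}$.

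Next I would argue that the two structure maps $\Wald_\infty^\d \to \Add^{hC_2}$ and $\Add^\gd \to \Add^{hC_2}$ are compatible with the real structures and preserve cotensors with $C_2$: the former because it is $(-)^{hC_2}$ applied to a $C_2$-equivariant, product-preserving functor $\Exact_\infty \to \Add$; the latter by construction in loc.\ cit. Consequently the fiber product $\Wald_\infty^\d \times_{\Add^{hC_2}} \Add^\gd$ inherits a real structure and admits cotensors by $C_2$, computed componentwise.

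Finally, since $\Wald_\infty^\gd$ sits inside this fiber product as a full subcategory, the real enrichment restricts automatically, and it suffices to check that the subcategory is closed under cotensors with $C_2$. Given $(\caE,\phi) \in \Wald_\infty^\gd$, its cotensor is the hyperbolic pair $(\caE \times \caE^\op, \phi^{C_2})$ with the induced data. Axiom (1) of \cref{Waldgd} holds because the underlying cotensor already lives in $\Add^\gd$. For Axiom (2), a pushout square in $\caE \times \caE^\op$ whose top edge is a cofibration decomposes as a pushout square in $\caE$ with cofibration paired with an ambigressive pullback square in $\caE$ with fibration on top; the pullback of the corresponding $H$-spaces then follows from applying Axiom (2) to $(\caE,\phi)$ together with its co-Waldhausen dual, which is available precisely because $(\caE,\phi)$ is itself an object of $\Wald_\infty^\gd$. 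The main obstacle in this strategy is this last verification: carefully translating the pushout-plus-cofibration data on one factor of the hyperbolic construction, via the ambient duality, into the ambigressive pullback data needed to invoke the dual instance of Axiom (2) on the other factor.
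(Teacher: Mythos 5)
The paper's own proof is a one-line citation of \cite[Corollary 7.28]{realKthHSV}, so your attempt to reconstruct the argument from scratch is a genuinely different route. Your first two moves are sound: identifying the cotensor on $\Wald_\infty^\d = \Exact_\infty^{hC_2}$ as the hyperbolic construction, and observing that the fiber product $\Wald_\infty^\d \times_{\Add^{hC_2}} \Add^\gd$ computes cotensors componentwise provided the structure maps preserve them. The problem is the last step.

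Your description of the cotensor as the pair $(\caE\times\caE^\op,\phi^{C_2})$ ``with the induced data'' misidentifies the genuine refinement. The cotensor $Z^{C_2}$ in any real $\infty$-category is characterized, after taking genuine fixed points of mapping spaces, by $\map(W,Z^{C_2})^{C_2} \simeq \map(W,Z)^{e}$; in other words $(-)^{C_2}$ is the right adjoint to the underlying-object functor. For $\Add^\gd$ this right adjoint factors through the right adjoint of $\Add^\gd \to \Add^{hC_2}$, which sends an additive $\infty$-category with duality to itself equipped with the \emph{terminal} right fibration $\id\colon \caH^\lax \to \caH^\lax$. So the genuine refinement on the cotensor is the tautological one and does not depend on $\phi$ at all. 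In particular, all fibers $H(D)$ of that refinement are contractible, and the pullback condition in Axiom (2) of \cref{Waldgd} is satisfied vacuously---no decomposition of hyperbolic pushout squares and no appeal to a ``co-Waldhausen dual'' (a notion you do not define) is needed or meaningful here. As written, the final paragraph both over-complicates the check that should be trivial and rests on an incorrect description of the cotensor, so it does not constitute a proof; you should either cite the general result as the paper does, or correct the identification of the cotensor's genuine refinement and observe that Axiom (2) then holds automatically.
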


\begin{proof}
    This follows directly from \cite[Corollary 7.28]{realKthHSV}.
\end{proof}

\subsection{The real $S_\bullet$-construction}

This sections builds on the $S_\bullet$-construction defined by Barwick in \cite[Section 3]{barwick.wald} to include the genuine dualities presented beforehand. We originally introduced this in \cite[Section 8.2]{realKthHSV}, we refer the reader there for more details.

\begin{definition} Let $\caC$ be an exact $\infty$-category and $n \geq 0$.
We define  $$S^u(\caC)_n \subset \Fun(\Ar([n]),\caC)$$ to be the full subcategory
of functors $A\colon \Ar([n]) \to \caC$ such that for every $ 0 \leq i \leq n$
the image $A_{i,i}$ is zero and for every 
$0 \leq i \leq j \leq \ell \leq k \leq n$ the induced square

\begin{equation}\label{pullbu}
\begin{tikzcd}
A_{i,\ell} \ar[r]\ar[d] &  A_{i,k} \ar[d]  \\
A_{j,\ell} \ar[r] &  A_{j,k}
\end{tikzcd}
\end{equation}
in $\caC$ is an exact square.	
\end{definition}

The $\infty$-category $\Fun(\Ar([n]),\caC)$ is an exact $\infty$-category
with objectwise structure, however this exact structure is not inherited by $S(\caC)_n$.
In what follows we endow $S(\caC)_n$ with the structure of an exact $\infty$-category with fewer cofibrations and fibrations than what we would obtain from the objectwise one.

\begin{definition}\label{cofibrations_in_Se} Let $\caC$ be an exact $\infty$-category.
\begin{itemize}
\item We set a map $A \to B$ in $S(\caC)_n$ to be a cofibration if 
for every $1 \leq i < j \leq n $ the canonical maps
$ A_{0,1} \to B_{0,1} $ and
$ A_{0,j} \coprod_{A_{0,i}} B_{0,i} \to B_{0,j}$ are cofibrations.

\item We set a map $A \to B$ in $S(\caC)_n$ to be a fibration if for every $1 \leq i < j \leq n $ the canonical maps $A_{n-1,n} \to B_{n-1,n}$ and
$ A_{i,n} \to B_{i,n} \prod_{B_{j,n}} A_{j,n} $ are fibrations.
\end{itemize}
\end{definition}

\begin{proposition}[{\cite[Proposition 8.7]{realKthHSV}}]\label{exact_structure_on_S}
Let $\caC$ be an exact $\infty$-category and $n \geq 0$. The $\infty$-category $S(\caC)_n$ with the choice of cofibrations and fibrations as above forms an exact $\infty$-category and the embedding $S(\caC)_n \subset \Fun(\Ar([n]),\caC)$ is exact. 
\end{proposition}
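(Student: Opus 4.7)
The plan is to verify in turn the three defining conditions of an exact $\infty$-category from \cref{def:exact_infty_cats}, together with the exactness of the embedding into $\Fun(\Ar([n]),\caC)$. First, I would establish that $S(\caC)_n$ is additive: the constant zero functor is automatically in $S(\caC)_n$ since zero squares are exact in any exact $\infty$-category, and the exactness condition on the squares \eqref{pullbu} is preserved under finite direct sums (which exist objectwise in $\Fun(\Ar([n]),\caC)$), so $S(\caC)_n$ inherits additivity from $\Fun(\Ar([n]),\caC)$.

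Next, I would verify the Waldhausen axioms for $S(\caC)_n$ equipped with the cofibrations of \cref{cofibrations_in_Se}. That every object is cofibrant amounts to checking the defining conditions for the map $0\to A$: the map $0\to A_{0,1}$ is a cofibration in $\caC$ (by exactness of $\caC$), and the pushout condition reduces to $A_{0,i}\to A_{0,j}$ being a cofibration, which holds because it is the top horizontal edge of the exact square
\[
\begin{tikzcd}
A_{0,i} \ar[r] \ar[d] & A_{0,j} \ar[d] \\
0 \ar[r] & A_{i,j}
\end{tikzcd}
\]
obtained from \eqref{pullbu} with indices $(0,i,i,j)$. The heart of the argument is then to show that pushouts along cofibrations exist in $S(\caC)_n$ and agree with the objectwise pushouts in $\Fun(\Ar([n]),\caC)$. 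Given a cofibration $A\to B$ and a map $A\to A'$, I form the objectwise pushout $B'\coloneqq B\coprod_A A'$ and must show: (i) every square \eqref{pullbu} of $B'$ is exact, which follows from the gluing stability of exact squares in $\caC$ applied slotwise; and (ii) the induced map $A'\to B'$ is a cofibration in the sense of \cref{cofibrations_in_Se}, which I obtain by rewriting the relevant maps as iterated pushouts of the defining cofibrations of $A\to B$ and using closure of cofibrations in $\caC$ under cobase change.

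Once the Waldhausen structure is established, the co-Waldhausen structure on $S(\caC)_n$ follows by the entirely dual argument using the last row $A_{i,n}$ in place of the first column $A_{0,j}$. For condition (3) of \cref{def:exact_infty_cats}, I would take an ambigressive pushout square in $S(\caC)_n$, note that its evaluation at each $(i,j)$ is an ambigressive pushout in $\caC$, hence by exactness of $\caC$ also an ambigressive pullback, and then verify that the vertical fibration conditions of \cref{cofibrations_in_Se} promote the objectwise pullback to an ambigressive pullback in $S(\caC)_n$; the reverse implication is dual. Finally, the exactness of the embedding $S(\caC)_n\subset \Fun(\Ar([n]),\caC)$ is immediate from the fact established above that pushouts along cofibrations (and pullbacks along fibrations) in $S(\caC)_n$ are computed objectwise.

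The main obstacle is the bookkeeping in step (ii) above: the cofibration condition of \cref{cofibrations_in_Se} involves the telescoped maps $A_{0,j}\coprod_{A_{0,i}}B_{0,i}\to B_{0,j}$, and one must check that these remain cofibrations after cobase change and, simultaneously, that the resulting diagram satisfies the exact-square condition \eqref{pullbu} at every quadruple $(i,j,\ell,k)$. I expect this to reduce to a diagram chase in which each square of $B'$ is exhibited as a suitable pushout of exact squares of $A$, $A'$, and $B$, stable by the gluing lemma for pushout squares along cofibrations in the exact $\infty$-category $\caC$.
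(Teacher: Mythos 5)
The paper does not give a proof of this proposition --- it is recalled verbatim from \cite[Proposition~8.7]{realKthHSV} --- so your proposal can only be assessed on its own terms. Your overall strategy is the standard one and is sound: verify the Waldhausen axioms with the first-row Reedy-style cofibrations of \cref{cofibrations_in_Se}, dualize to get the co-Waldhausen axioms from the last-column fibrations, check that ambigressive pushouts and ambigressive pullbacks coincide, and observe that the relevant (co)limits are computed in $\Fun(\Ar([n]),\caC)$ so that the inclusion is exact.

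There is, however, one step that you take for granted but that is the actual pivot of the whole argument, and it should not be filed under ``bookkeeping''. When you write ``I form the objectwise pushout $B'\coloneqq B\coprod_A A'$'', you are implicitly asserting that the pushout $B_{i,j}\coprod_{A_{i,j}} A'_{i,j}$ exists in $\caC$ for \emph{every} pair $i\le j$. Since $A\to A'$ is arbitrary and $\caC$ has only pushouts along cofibrations, this requires that $A_{i,j}\to B_{i,j}$ be a cofibration for all $0\le i\le j\le n$; but \cref{cofibrations_in_Se} only directly controls the maps out of $A_{0,\bullet}$. One needs the auxiliary lemma that a cofibration in $S(\caC)_n$ is an objectwise cofibration: for $i=0$ by a telescoping induction through the factorization $A_{0,j}\to A_{0,j}\coprod_{A_{0,i}}B_{0,i}\to B_{0,j}$, and for $i>0$ by identifying $A_{i,j}\to B_{i,j}$ with the induced map on cofibers of $A_{0,i}\to A_{0,j}$ and $B_{0,i}\to B_{0,j}$, which one exhibits as a cobase change of $A_{0,j}\coprod_{A_{0,i}}B_{0,i}\to B_{0,j}$. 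The dual statement for fibrations is needed in the same way for pullbacks. The same lemma also does essential work in your verification of condition (3) in \cref{def:exact_infty_cats}: the claim that an ambigressive pushout in $S(\caC)_n$ is objectwise ambigressive uses it, and conversely promoting the resulting objectwise fibrations $B_{i,j}\to D_{i,j}$ to a Reedy fibration $B\to D$ in $S(\caC)_n$ is again a genuine argument, not a formality. Without this lemma you cannot even form the diagram whose exactness you then propose to chase by ``gluing stability''.
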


\subsection*{The real $S_\bullet$-construction}

We prove in \cite[Corollary 7.18]{realKthHSV} that the real $\infty$-category $\Wald_\infty^\gd$ is cotensored over $\Cat_\infty^\gd$\textemdash the infty-category of $\infty$-categories with genuine dualities. In fact, for a small $\infty$-category $K$ with genuine duality and a small Waldhausen $\infty$-category $\caC$ with genuine duality, the cotensor $\caC^K$ has underlying $\infty$-category $\Fun(K, \caC)$ and objectwise fibrations and cofibrations. Using this, we endow the exact $\infty$-category $S(\caC)_n$ with a genuine duality as follows.

\begin{proposition}[{\cite[Proposition 8.8]{realKthHSV}}]\label{Se_has_gd}
Let $\caC$ be a Waldhausen  $\infty$-category with genuine duality and let $n \geq 0$.
The exact subcategory $S(\caC)_n \subset \Fun(\Ar([n]),\caC)$ is closed under the genuine duality of the functor $\infty$-category.
\end{proposition}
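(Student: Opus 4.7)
The plan is to establish closure at two levels: the underlying $\infty$-categorical duality and its genuine refinement.

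First, I would make the underlying duality on $\Fun(\Ar([n]),\caC)$ explicit. Equip $\Ar([n])$ with the involution $\omega\colon \Ar([n])^\op \xrightarrow{\sim} \Ar([n])$ sending the edge $(i\leq j)$ to $(n-j\leq n-i)$. Combined with the duality $D\colon \caC^\op \xrightarrow{\sim} \caC$ on $\caC$, this yields the underlying duality $A\mapsto D\circ A^\op\circ \omega$, so that $(A^\vee)_{i,j} = D(A_{n-j,n-i})$.

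Next I would verify that this restricts to $S(\caC)_n$. The diagonal entries $(A^\vee)_{i,i} = D(A_{n-i,n-i}) = D(0) = 0$ vanish since $D$ preserves the zero object. For indices $i\leq j\leq \ell\leq k$, a direct calculation identifies the square of $A^\vee$ of the form \eqref{pullbu} with the image under $D$, arrows reversed, of the square of $A$ at $n-k\leq n-\ell\leq n-j\leq n-i$. Since $D$ exchanges the cofibration and fibration subcategories of $\caC$ by virtue of $\caC \in \Wald_\infty^\d$, and exact squares are precisely the self-dual ambigressive squares, the dual square is again exact. Hence $A^\vee\in S(\caC)_n$.

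For the genuine refinement, I would use that the cotensor $\caC^{\Ar([n])}$ in $\Wald_\infty^\gd$ (provided by \cite[Cor.~7.18]{realKthHSV}) has underlying $\infty$-category $\Fun(\Ar([n]),\caC)$ and a right fibration $\phi^{\Ar([n])}$ built functorially from $\phi$. Pulling this right fibration back along the inclusion $S(\caC)_n \hookrightarrow \Fun(\Ar([n]),\caC)$ produces a right fibration over $\caH^\lax(S(\caC)_n)$, which is well-defined by the preceding step. It remains to verify the excision axiom of \cref{Waldgd}(2) for the restricted cofibration class of \cref{cofibrations_in_Se}. By \cref{exact_structure_on_S} the embedding is exact, so pushouts in $S(\caC)_n$ along these cofibrations are computed in the ambient functor $\infty$-category; the excision condition then transfers directly from that of the cotensor, which itself inherits it objectwise from the original excision axiom for $\caC$.

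The main obstacle is describing the genuine refinement on the cotensor explicitly enough to see that the restriction along $S(\caC)_n \subset \Fun(\Ar([n]),\caC)$ still satisfies excision with respect to the \emph{smaller} class of cofibrations introduced in \cref{cofibrations_in_Se}; everything else reduces to combinatorial bookkeeping with the involution $\omega$ and the self-duality of exactness in $\caC$.
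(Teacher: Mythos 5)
Your proof is correct and follows what is almost certainly the paper's own approach: make the duality on $\Fun(\Ar([n]),\caC)$ explicit via the involution $\omega(i,j)=(n-j,n-i)$ on $\Ar([n])$, check closure by observing that $D$ sends zeros to zeros and exchanges cofibrations with fibrations (so exact squares go to exact squares), and then note that the genuine refinement restricts by pullback while the excision axiom transfers because the embedding $S(\caC)_n \hookrightarrow \Fun(\Ar([n]),\caC)$ is exact.

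Two small remarks. First, your phrase ``the image under $D$, arrows reversed'' is doing real work: the square of $A^\vee$ at $(i,j,\ell,k)$ is the \emph{transpose} of $D$ applied to the square of $A$ at $(n-k,n-\ell,n-j,n-i)$. Transposing is what restores the orientation ``top and bottom horizontal maps are cofibrations, vertical maps are fibrations,'' since $D$ alone would swap these. It is worth spelling this out, because an exact square is not preserved by transposition alone, nor by $D$ alone, but is preserved by the composite; this is exactly the ``self-duality of exactness'' you invoke. Second, your final paragraph frames the ``smaller class of cofibrations'' as the main obstacle, but it is actually the opposite of a problem: with fewer cofibrations one has fewer $(\text{pushout},\text{cofibration})$ pairs for which the $H$-square must be a pullback, and every such pair is carried by the exact embedding of \cref{exact_structure_on_S} to one in the cotensor, where excision is already known objectwise. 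So your argument in the third paragraph already closes the gap you worried about in the fourth.
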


To define the real $S_\bullet$-construction we now want to assemble the categories $S(\caC)_n$ into a real simplicial object.

\begin{notation}
Let $\Ar\colon \underline{\Delta} \to \Cat_\infty^\gd$ the real functor 
$$\underline{\Delta} \subset \Cat_\infty^\gd \xrightarrow{\Hom_{ \Cat_\infty^\gd }([1], -)}  \Cat_\infty^\gd.$$

\end{notation}

\begin{corollary}

The real functor $\underline{\Delta}^\op \times \Wald_\infty^\gd  \to \Wald_\infty^\gd$ that sends $ (K,\caC) \mapsto \caC^{\Ar(K)}$ induces a real functor 
\[
\begin{tikzcd}[row sep=tiny]
S\colon \underline{\Delta}^\op \times \Wald_\infty^\gd \ar[r, "\sigma"]     &\Wald_\infty^\gd\\
\end{tikzcd}
\]
that sends $(K,\caC) \in \underline{\Delta}^\op \times \Wald_\infty^\gd$
to the genuine duality on $S(\caC)_n$ inherited from $\caC^{\Ar([n])}$
via Proposition \ref{Se_has_gd}.
\end{corollary}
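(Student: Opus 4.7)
The plan is to obtain $S$ as a subobject of the cotensor functor pulled back along $\Ar$. Since $\Wald_\infty^\gd$ is cotensored over $\Cat_\infty^\gd$, the cotensor provides a real functor $\Cat_\infty^{\gd,\op}\times\Wald_\infty^\gd\to\Wald_\infty^\gd$, $(K,\caC)\mapsto\caC^K$; precomposing with $\Ar^\op\times\id$ already yields a real functor $\underline{\Delta}^\op\times\Wald_\infty^\gd\to\Wald_\infty^\gd$ sending $(K,\caC)$ to $\caC^{\Ar(K)}$, with the objectwise Waldhausen and genuine duality structure. The task is to refine this to land in the $S$-subcategories, equipped with the smaller cofibration and fibration families of \cref{cofibrations_in_Se}.

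For each fixed $n$ and $\caC$, \cref{exact_structure_on_S} realises $S(\caC)_n$ as an exact subcategory of $\caC^{\Ar([n])}$ and \cref{Se_has_gd} shows that this subcategory is stable under the objectwise genuine duality. Together these exhibit $S(\caC)_n$ as a Waldhausen $\infty$-category with genuine duality. The content of the corollary is then that this pointwise refinement assembles into a real functor. For the variable $\caC$, every morphism $f\colon\caC\to\caC'$ in $\Wald_\infty^\gd$ preserves zeros, cofibrations, fibrations, pushouts along cofibrations and pullbacks along fibrations, hence sends the exact-square condition and the cofibration/fibration conditions of \cref{cofibrations_in_Se} into themselves. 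For the variable $K$, a simplicial operator $\theta\colon [n]\to[m]$ induces $\Ar(\theta)^\ast\colon\caC^{\Ar([m])}\to\caC^{\Ar([n])}$; for $A\in S(\caC)_m$, the composite $A\circ\Ar(\theta)$ again has vanishing diagonal, and the squares arising from an index chain $i\leq j\leq\ell\leq k$ in $[n]$ are either the exact squares indexed by $\theta(i)\leq\theta(j)\leq\theta(\ell)\leq\theta(k)$ in $[m]$ or degenerate to ambigressive squares involving a zero object, both of which are exact.

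The main obstacle is coherence: promoting these objectwise checks to a genuinely real (i.e.\ $\Spc^{C_2}$-enriched) functor of $\infty$-categories rather than a set-theoretic assignment. Here the crucial observation is that $S(\caC)_n$ and the subcategories of its cofibrations and fibrations are carved out of $\caC^{\Ar([n])}$ by limit-type conditions (vanishing on the diagonal, exactness of designated squares, membership in the smaller cofibration/fibration subcategory), all of which are preserved by the real structure of the cotensor. Consequently the subobject assignment lifts along the forgetful functor $\Wald_\infty^\gd\to\Fun(\Lambda^2_2,\Cat_\infty)^{hC_2}$. Since this reasoning has already been carried out in detail in \cite{realKthHSV}, the proof reduces to invoking \cref{exact_structure_on_S} and \cref{Se_has_gd} together with the cotensor structure of $\Wald_\infty^\gd$ over $\Cat_\infty^\gd$ to package the data coherently.
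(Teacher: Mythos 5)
The paper does not supply an explicit proof for this corollary, treating it as a direct consequence of \cref{exact_structure_on_S}, \cref{Se_has_gd}, and the cotensor structure of $\Wald_\infty^\gd$ over $\Cat_\infty^\gd$; your proposal identifies and organizes exactly these ingredients, so it is in line with the paper's intended route. One small point to tighten: in the variable-$K$ direction you check that $A\circ\Ar(\theta)$ lies in $S(\caC)_n$, but you should also say a word about why $\Ar(\theta)^\ast$ is exact for the \emph{modified} cofibration/fibration families of \cref{cofibrations_in_Se} (not merely that it preserves the objects of $S(\caC)_n$), paralleling the verification you already do in the variable-$\caC$ direction.
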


\begin{definition}
The functor $\sigma$ as above is the transpose of a real functor $S_\bullet\colon \Wald_\infty^\gd \to \mathrm{rs}\Wald_\infty^\gd $, which we call the real Waldhausen $S_\bullet$-construction. From now on we will abuse notation by calling by $S$ both the real and usual $S_\bullet$-construction.
\end{definition}

An important feature of the real $S_\bullet$-construction is that, after taking its edgewise subdivision, we obtain a Segal object; see \cite[Proposition 8.15]{realKthHSV}. 

Before moving to recall the definition of the real $K$-theory space, we recall a fact about the real $S_\bullet$-construction that will be paramount to obtain the sought comparison.

\begin{notation}\label{notation_ambwald}
We denote by $\mathrm{Amb}(\caC) \subset \caC^{\widetilde{[1] \times [1]}}$ the full Waldhausen subcategory with genuine duality spanned by the exact squares.
\end{notation}

The embedding of $\infty$-categories with duality $\widetilde{[1] \times [1]} \subset \Ar([3])$ sending $(i,j) \mapsto (i,j+2)$ induces a map of Waldhausen $\infty$-categories with genuine duality as below $$\kappa\colon S(\caC)_3 \to \mathrm{Amb}(\caC) \subset {\caC^{\widetilde{[1] \times [1]}}}.$$

\begin{lemma}[{\cite[Corollary 8.14]{realKthHSV}}]\label{corqa}
Let $\caC$ be a Waldhausen $\infty$-category with genuine duality. The map
$$\kappa\colon S(\caC)_3 \to \mathrm{Amb}(\caC)$$ of Waldhausen $\infty$-categories with genuine duality is an equivalence.
\end{lemma}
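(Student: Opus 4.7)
The map $\kappa$ is the restriction along the $C_2$-equivariant embedding of posets with duality $\iota\colon \widetilde{[1]\times[1]} \hookrightarrow \Ar([3])$, $(i,j) \mapsto (i, j+2)$; it lands in $\mathrm{Amb}(\caC)$ by the very definition of $S(\caC)_3$ (applied to the four-tuple $(i,j,\ell,k)=(0,1,2,3)$), and is compatible with the genuine duality of \cref{Se_has_gd} because $\iota$ is $C_2$-equivariant with respect to the duality $(a,b) \mapsto (3-b, 3-a)$ on $\Ar([3])$, which preserves the image of $\iota$ and restricts to the duality on $\widetilde{[1]\times[1]}$. My plan is to build an explicit inverse $\lambda\colon \mathrm{Amb}(\caC) \to S(\caC)_3$ by iterated Kan extension from a subposet of $\Ar([3])$, and then verify both composites are equivalences.

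Let $K \subset \Ar([3])$ be the subposet obtained from the image of $\iota$ by adjoining the diagonal $\{(i,i):0\leq i\leq 3\}$. Given $\sigma \in \mathrm{Amb}(\caC)$, I first extend it to a functor $K \to \caC$ by setting the diagonal values to $0$, and then define $\lambda(\sigma)\colon \Ar([3]) \to \caC$ by right Kan extending at $(0,1)$ and left Kan extending at $(2,3)$ along the inclusion $K \hookrightarrow \Ar([3])$. A direct analysis of the relevant slice categories shows that these extensions compute $\mathrm{fib}(A_{0,2} \to A_{1,2}) \simeq \mathrm{fib}(A_{0,3} \to A_{1,3})$ and $\mathrm{cofib}(A_{0,2} \to A_{0,3}) \simeq \mathrm{cofib}(A_{1,2} \to A_{1,3})$ respectively, where the two fibers (respectively cofibers) agree by the Cartesian (respectively coCartesian) property of the central ambigressive square. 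Since $(0,1)$ and $(2,3)$ are incomparable in $\Ar([3])$, the two extensions are independent and jointly assemble into a single functor $\lambda(\sigma)$. I would then verify $\lambda(\sigma) \in S(\caC)_3$ by checking exactness of each defining square: the central one is exact by hypothesis, while every other nontrivial square involves a zero vertex and reduces, via the universal property built into the Kan extension, to an ambigressive square obtained from the central one by pulling back along a fibration or pushing out along a cofibration — an operation that preserves ambigressiveness, as already used in the proof of \cref{exact_structure_on_S}.

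That $\lambda$ is a two-sided inverse to $\kappa$ is then direct: $\kappa \circ \lambda \simeq \id_{\mathrm{Amb}(\caC)}$ by construction, and $\lambda \circ \kappa \simeq \id_{S(\caC)_3}$ because, for any $A \in S(\caC)_3$, the exact-square conditions for the four-tuples $(0,1,1,2)$ and $(0,2,2,3)$ identify $A_{0,1}$ and $A_{2,3}$ with precisely the fibers and cofibers reconstructed by $\lambda$, while all other entries of $A$ are pinned down by functoriality and the vanishing on the diagonal. Preservation of the Waldhausen cofibration/fibration structure of \cref{cofibrations_in_Se} is immediate, since those definitions are phrased in terms of the restriction to the middle square. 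The main obstacle will be the finite but careful case analysis required to verify that every nontrivial square in $\lambda(\sigma)$ is ambigressive; the key input is the closure of ambigressive squares under pullback along fibrations and pushout along cofibrations, combined with a precise computation of the slice categories so that the Kan-extension (co)limits match the required (co)fibers.
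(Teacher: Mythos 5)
The paper states this lemma as a citation to \cite[Corollary 8.14]{realKthHSV} and supplies no proof of its own, so there is no in-paper argument to compare against; assessing your construction on its own terms, the strategy (an explicit inverse by Kan extension from the subposet $K$) has the right shape, but there are two genuine problems. The assertion that $(0,1)$ and $(2,3)$ are incomparable in $\Ar([3])$ is false: since $0\le 2$ and $1\le 3$ one has $(0,1)\le(2,3)$, and indeed these two vertices are exactly the ones exchanged by the duality $(i,j)\mapsto(3-j,3-i)$. Hence the two extensions cannot literally be glued "independently"; you must perform iterated Kan extensions and argue order-independence, which does hold, but for a cofinality reason — the coinitial subposet $\{(0,2),(1,1),(1,2)\}$ of $K_{\ge(0,1)}$ and the cofinal subposet $\{(1,2),(1,3),(2,2)\}$ of $K_{\le(2,3)}$ each avoid the other added vertex — not for the stated one, and the required morphism $A_{0,1}\to A_{2,3}$ then has to be produced from the universal property rather than being vacuously absent.

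The larger gap is that the statement is an equivalence in $\Wald_\infty^\gd$, while your argument only treats the underlying exact $\infty$-category. An equivalence here must also be an equivalence of (pro-)dualities and of the genuine refinements $H\to\caH(-)$; your remark that $\iota$ is $C_2$-equivariant shows $\kappa$ is a morphism in $\Wald_\infty^\gd$, but the self-duality of the "right at $(0,1)$, left at $(2,3)$" construction is only a heuristic, not a proof, that the induced map on genuine refinements is an equivalence, and this is precisely the part that makes the lemma more than a statement about exact $\infty$-categories. Finally, the closing claim that the cofibration/fibration structure of \cref{cofibrations_in_Se} is phrased purely in terms of the middle square is inaccurate: the cofibrations in $S(\caC)_3$ explicitly involve $A_{0,1}$ and pushouts over $A_{0,1}$, which lie outside the image of $\iota$, so exactness of $\lambda$ also requires an argument and not merely an observation.
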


\subsection{Real $K$-theory space}

In \cite[Section 8.2]{realKthHSV} we define a version of the $S_\bullet$-construction that plays nicely with the dualities involved in our objects. We then use this to define the real $K$-theory space for Waldhausen $\infty$-categories with genuine duality as follows.

\begin{definition}
Let $\caC$ be a Waldhausen $\infty$-category with genuine duality. We define the real $K$-theory space of $\caC$ as 
$$\KR(\caC)\coloneqq|  (-)^\simeq \circ S(\caC) \circ e|.$$
\end{definition}

\section{Setting the stage}

In this section we introduce the notion of $\infty$-category with genuine pro-duality, which generalizes the notion of $\infty$-category with pro-duality of \cite{Herm}. The main reason for us to consider pro-dualities is that the $\infty$-category of such is cotensored over the $\infty$-category of small $\infty$-categories, fact that is not true for genuine dualities, and that we use to define the hermitian $Q$-construction.

\subsection{$\infty$-categories with pro-duality}\label{produal}

Note that the non-trivial $C_2$-action on $\Cat_\infty$ (see \cite[Lemma 2.1]{realKthHSV}) together with the non trivial duality on $[1]$ induce a $C_2$-action on the over category $\Cat_{\infty / [1]}$. 

\begin{definition}\label{def:pro-duality}We define the $\infty$-category of $\infty$-categories with pro-dualities as the $\infty$-category $(\Cat_{\infty / [1]})^{hC_2}$, which we will denote simply by $\Cat_\infty^\mathrm{pd}$. We call an object of this category an $\infty$-category with pro-duality.
\end{definition}

\begin{lemma}\label{lem:duality_on_over_cat}
Let $\caC$ be an $\infty$-category with $C_2$-action and
$X \in \caC^{hC_2}.$ There is an induced $C_2$-action on $\caC_{/X}$
and a canonical equivalence  of $\infty$-categories
$$(\caC_{/X})^{hC_2} \simeq (\caC^{hC_2})_{/X}.$$
\end{lemma}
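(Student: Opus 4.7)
The plan is to realize the slice $\caC_{/X}$ as a pullback of $C_2$-objects in $\Cat_\infty$, and then to exploit the fact that homotopy fixed points commute with limits. Concretely, I would first write
\[
\caC_{/X} \simeq \caC^{[1]} \times_{\caC} \{X\},
\]
where the map $\caC^{[1]} \to \caC$ is evaluation at the target vertex $1 \in [1]$. The key point is that I equip $[1]$ here with the \emph{trivial} $C_2$-action (the flip action would correspond to passing to a duality, not to slicing). With this convention, the $C_2$-action on $\caC$ alone induces a $C_2$-action on $\caC^{[1]}$, the evaluation map is $C_2$-equivariant, and the inclusion $\{X\} \hookrightarrow \caC$ is equivariant precisely because $X \in \caC^{hC_2}$. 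This exhibits $\caC_{/X}$ as the limit of a diagram living in $\Fun(BC_2, \Cat_\infty)$, giving the induced $C_2$-action required by the first sentence of the statement.

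For the claimed equivalence, I would apply $(-)^{hC_2}$, which is a right adjoint (to the constant-diagram functor $\Cat_\infty \to \Fun(BC_2, \Cat_\infty)$) and so preserves limits, obtaining
\[
(\caC_{/X})^{hC_2} \simeq (\caC^{[1]})^{hC_2} \times_{\caC^{hC_2}} \{X\}^{hC_2} \simeq (\caC^{[1]})^{hC_2} \times_{\caC^{hC_2}} \{X\}.
\]
Since $[1]$ carries the trivial action, the cotensor $(-)^{[1]}$ commutes with the limit computing $(-)^{hC_2}$, so $(\caC^{[1]})^{hC_2} \simeq (\caC^{hC_2})^{[1]}$. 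Substituting this identification on the right-hand side recovers exactly the pullback presentation of $(\caC^{hC_2})_{/X}$.

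The main obstacle is really a bookkeeping one: making sure that the pullback diagram lifts to $\Fun(BC_2, \Cat_\infty)$ rather than being assembled only pointwise. This reduces to verifying the equivariance of the two structure maps $\caC^{[1]} \to \caC$ and $\{X\} \hookrightarrow \caC$, which is automatic from the choice of trivial action on $[1]$ and the hypothesis on $X$. Everything else then follows formally from the fact that both $(-)^{hC_2}$ and $(-)^{[1]}$ (with $[1]$ trivially acted upon) are right adjoints, hence preserve the limits in question.
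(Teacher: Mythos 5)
The paper states this lemma without providing a proof, so there is no in-text argument to compare against. Your proposal is correct as written: presenting $\caC_{/X}$ as the pullback of the cospan $\caC^{[1]} \xrightarrow{\ev_1} \caC \xleftarrow{X} \{*\}$, promoted to a diagram in $\Fun(BC_2,\Cat_\infty)$ by giving the cotensoring $[1]$ the trivial action and using the homotopy-fixed-point structure of $X$ to make the constant map $\{*\}\to\caC$ equivariant, is exactly the standard way to induce the $C_2$-action on the slice. Applying $(-)^{hC_2}$, which preserves limits, and commuting it with $(-)^{[1]}$ (both are limit constructions, and $[1]$ carries the trivial action) yields $(\caC^{hC_2})^{[1]}\times_{\caC^{hC_2}}\{X\}\simeq(\caC^{hC_2})_{/X}$, as desired. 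One small point worth flagging, which your argument handles implicitly but cleanly: the $[1]$ appearing in the cotensor and the object $X$ (which, in the paper's application, is also $[1]$ but equipped with its nontrivial duality) play entirely different roles, and it is the former that must carry the trivial action.
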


Using \cref{lem:duality_on_over_cat}, we obtain an alternative way of thinking of pro-dualities, as it directly follows that there is an equivalence of $\infty$-categories
$$(\Cat_{\infty / [1]})^{hC_2} \simeq (\Cat_{\infty}^{hC_2})_{/[1]}.$$

\begin{notation}
If $\caD \to[1]$ is an $\infty$-category with pro-duality,
we set $\caC\coloneqq \caD_0 \simeq \caD^\op_1$ 
and often write $\bar{\caC}$ for $\caD.$

We say that $\bar{\caC} \to [1]$ endows $\caC$ with a pro-duality. We call a map $\bar{\caC} \to \bar{\caD} $ in $(\Cat_{\infty / [1]})^{hC_2}$
a functor $\caC \to \caD$ preserving the pro-duality.
\end{notation}

\begin{example}\label{Ex_inclusion_dualities} Every duality is a pro-duality. Indeed, 
the $C_2$-equivariant adjunction
$$(-) \times [1]\colon \Cat_{\infty} \rightleftarrows \Cat_{\infty/[1]}\colon \Fun_{[1]}([1],-)$$
induces an adjunction $$\alpha\colon \Cat_{\infty}^{hC_2} \rightleftarrows (\Cat_{\infty / [1]})^{hC_2} \simeq (\Cat_{\infty}^{hC_2})_{/[1]}: \beta.$$
Since the functor $(-) \times [1]$ is an inclusion,
the functor $\alpha$ is an inclusion, too.
Via $\alpha$ we view $\infty$-categories with duality as $\infty$-categories with pro-duality.
\end{example}

\begin{remark} Every $\infty$-category with pro-duality has a canonical $\infty$-category with duality associated. We consider the functor 

$$(\Cat_{\infty}^{hC_2})_{/[1]}\simeq (\Cat_{\infty / [1]})^{hC_2} \to \Cat_{\infty}^{hC_2},$$
that maps a pro-duality to the total $\infty$-category with duality, $(\bar{\caC} \to [1]) \mapsto \bar{\caC}$.
\end{remark}

With this in mind, we make the following definition of hermitian objects.

\begin{definition}
Let $\overline{\caC}\to [1]$ be an $\infty$-category with pro-duality. We define its $\infty$-category of hermitian objects $\caH(\caC)$ as the $\infty$-category of hermitian objects of the total $\infty$-category with duality $\bar{\caC}$.
\end{definition}

\begin{remark}Let $\caC$ be an $\infty$-category with duality.
The $\infty$-category of hermitian objects of $\caC$
agrees with the $\infty$-category of hermitian objects of the underlying pro-duality of $\caC$. Indeed, since $\caH([1]) $ is contractible, there is a canonical equivalence
$$\caH(\caC) \simeq \caH(\caC)\times \caH([1]) \simeq \caH(\caC \times [1]).$$
\end{remark}

\begin{remark}\label{uujn}
Let $\bar{\caC} \to [1]$ be an $\infty$-category with pro-duality. Every hermitian object of $\caC$, i.e. of $\bar{\caC}$, lies over the unique hermitian object $0$ of $[1]$.
Thus $\caH(\caC) \simeq \caC \times_{\bar{\caC}} \caH(\caC).$
So there is a forgetful functor $\caH(\caC)\to \caC.$	
\end{remark}

\subsection{$\infty$-categories with genuine pro-duality}

\begin{definition}
An $\infty$-category with genuine pro-duality 
is a pair $(\bar{\caC} \to [1], \phi\colon H \to \caH(\caC))$,
where $\bar{\caC} \to [1]$ is an $\infty$-category with pro-duality and
$H \to \caH(\caC)$ is a right fibration.
\end{definition}

\begin{example}
Every $\infty$-category with duality $(\caC, \phi\colon H \to \caH(\caC))$ gives rise to an $\infty$-category with genuine pro-duality $(\caC\times[1] \to [1], \phi\colon H \to \caH(\caC))$.
\end{example}

\begin{definition}
The $\infty$-category of small $\infty$-categories with genuine pro-duality 
is the pullback $$ \Cat_{\infty}^{\mathrm{gp}}\coloneqq (\Cat_{\infty / [1]})^{hC_2} \times_{\Cat_{\infty}} \caR$$
of evaluation at the target
$\caR \to \Cat_{\infty}$ and the functor
$\caH\colon (\Cat_{\infty / [1]})^{hC_2} \to \Cat_{\infty}.$
\end{definition}

\begin{remark} There is a genuine version of what we observed after \cref{def:pro-duality}. Indeed, there is a canonical equivalence
$$\Cat_{\infty}^{\mathrm{gp}}=(\Cat_{\infty / [1]})^{hC_2} \times_{\Cat_{\infty}} \caR \simeq (\Cat_{\infty}^{hC_2})_{ / [1]} \times_{\Cat_{\infty}^{hC_2} } \Cat_{\infty}^\gd \simeq (\Cat_{\infty}^\gd)_{ / [1]},$$
where $[1]$ carries the standard genuine refinement.
\end{remark}

\begin{remark} We can see any $\infty$-category with genuine pro-duality as an $\infty$-category with genuine duality. Indeed, we have a localization $\Cat_{\infty}^{hC_2} \subset \Cat_\infty^\gd$
that allows us to view an $\infty$-category with duality as one with genuine duality, by considering the standard genuine refinement; see \cite[Remark 2.3.4]{realKthHSV}. This, in turn, gives rise to a localization 
\begin{equation}\label{locx}
(\Cat_{\infty / [1]})^{hC_2} \simeq (\Cat_{\infty}^{hC_2})_{ / [1]} \subset (\Cat_{\infty}^\gd)_{ / [1]}\simeq\Cat_{\infty}^{\mathrm{gp}} ,\end{equation}
where the left adjoint sends
$\bar{\caC} \to [1] $ to $(\bar{\caC} \to [1], \id: \caH(\caC) \to \caH(\caC))$
and the right adjoint forgets the genuine refinement.
\end{remark}

\begin{remark}\label{rmk:inclusion_gd_to_gp} 
\begin{enumerate}
\item We can also view $\infty$-categories with genuine duality as $\infty$-categories with genuine pro-duality. The inclusion $(-)\times [1]\colon \Cat_{\infty}^{hC_2} \hookrightarrow (\Cat_{\infty / [1]})^{hC_2}$ gives rise to an inclusion
$$\Cat_\infty^\gd = \Cat_{\infty}^{hC_2} \times_{\Cat_{\infty}} \caR \hookrightarrow (\Cat_{\infty / [1]})^{hC_2} \times_{\Cat_{\infty}} \caR=\Cat_{\infty}^{\mathrm{gp}},$$
that takes pullback along the map
$[1] \to [0]$ of $\infty$-categories with standard genuine duality.

\item This inclusion admits a right adjoint, which is the functor
$$ \beta \times_{\Cat_{\infty}} \caR:(\Cat_{\infty / [1]})^{hC_2} \times_{\Cat_{\infty}} \caR \to \Cat_{\infty}^{hC_2} \times_{\Cat_{\infty}} \caR,$$
where $\beta$ is the right adjoint of the inclusion
$ \Cat_{\infty}^{hC_2} \subset (\Cat_{\infty / [1]})^{hC_2}$
of Example \ref{Ex_inclusion_dualities}.
\end{enumerate}
\end{remark}

In the following diagram we summarize the main definitions of dualities prevoiusly introduced and their relation. 

\begin{tz}
\node[](1)  {$\Cat_\infty^{hC_2}$};
\node[above of=1,yshift=0.15cm](1') {$\Cat^d_\infty$};

\node[la] at ($(1'.south)!0.5!(1.north)$) {$\rotatebox{270}{$\coloneqq$}$}; 

\node[right of=1,xshift=2.5cm](2) {$\Cat_\infty^\mathrm{gd}$};

\node[below of=1,yshift=-1cm](3) {$\slice{(\Cat_\infty^{hC_2})}{[1]}$};
\node[below of=3](3') {$\left(\slice{\Cat_\infty}{[1]}\right) ^{hC_2}$};
\node[below of=3'](3'') {$\Cat_\infty^{\mathrm{pd}}$};

\node[la] at ($(3.south)!0.6!(3'.north)$) {$\rotatebox{270}{$\simeq$}$}; 
\node[la] at ($(3'.south)!0.5!(3''.north)$) {$\rotatebox{90}{$\coloneqq$}$}; 

\node[right of=3,xshift=2.5cm](4) {$\slice{(\Cat_\infty^{\mathrm{gd}})}{[1]}$};
\node[below of=4,](4') {$\Cat_\infty^{\mathrm{gp}}$};
\node[below of=4'](4'') {$\Cat_\infty^{\mathrm{pd}}\times\caR$};

\node[la] at ($(4.south)!0.5!(4'.north)$) {$\rotatebox{270}{$\simeq$}$};
\node[la] at ($(4'.south)!0.5!(4''.north)$) {$\rotatebox{270}{$\coloneqq$}$}; 
 
\draw[right hook->] (1) to (2);
\draw[right hook->] (3) to (4);
\draw[right hook->] (1) to (3);
\draw[right hook->] (2) to (4);
\end{tz}

\subsection{Internal homs for genuine pro-dualities} In this sub section we present results of the internal homs for $\infty$-categories with (genuine) pro-duality that will be necessary for the hermitian $Q$-construction.

\begin{remark}[Internal homs in $\Cat_{\infty / [1]}$]\label{rmk:internalhom_Cat_over_1}

By \cite[Proposition B.3.14]{lurie.higheralgebra} the $\infty$-category $\Cat_{\infty / [1]}$ is cartesian closed. For functors $\caA \to [1],\caB \to [1]$ we write 
$\Fun^{[1]}(\caA,\caB)\to[1]$ for the internal hom.

As proven in \cite[Remark 2.36]{monadicity}, these internal homs are better understood via the following canonical equivalences. Let $\caA,\caB$ be two $\infty$-categories over $[1]$. There are canonical equivalences:
\begin{enumerate}
    \item for $i\in [1]$, $$\{i\}\times_{[1]}\Fun^{[1]}(\caA,\caB) \simeq \Fun(\caA_i,\caB_i)$$
    \item $$\Fun^{[1]}(\caC \times [1],\caD \times [1]) \simeq \Fun(\caC,\caD)\times [1].$$
\end{enumerate}
\end{remark}

\begin{remark}[Internal homs for pro-dualities] Now, using \cref{rmk:internalhom_Cat_over_1} together with \cite[Remark 2.6]{realKthHSV}, we know that also $(\Cat_{\infty / [1]})^{hC_2}$ is cartesian closed and moreover the forgetful functor $(\Cat_{\infty / [1]})^{hC_2} \to \Cat_{\infty / [1]}$ preserves the internal hom.
For $\infty$-categories with pro-duality $\bar{\caC}\to [1], \bar{\caD}\to [1]$
we write $\overline{\Fun(\caC,\caD)} $ for $\Fun^{[1]}(\bar{\caC},\bar{\caD}).$
\end{remark}

In the next lemma we state the existence of two functors relating the hermitian objects of the internal hom of two categories with pro-duality to the hermitian objects of the categories with pro-duality. The first of these functors is also used to describe the internal hom of the category of small categories with genuine pro-duality.

\begin{lemma} Let $\bar{\caC}\to [1], \bar{\caD}\to [1]$ be $\infty$-categories with pro-duality. Then we have two canonical functors as below:

$$ \caH(\Fun(\caC,\caD)) \to \Fun(\caH(\caC),\caH(\caD)),$$

$$\caH(\Fun_{[1]}(\bar{\caC},\bar{\caD})) \to \caH(\Fun(\caC,\caD)).$$

\end{lemma}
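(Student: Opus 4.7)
My plan is to build both functors as applications of the hermitian objects functor $\caH\colon (\Cat_{\infty/[1]})^{hC_2} \to \Cat_{\infty}$ to natural morphisms in the cartesian closed $\infty$-category $(\Cat_{\infty/[1]})^{hC_2}$ described in \cref{rmk:internalhom_Cat_over_1}. The single structural input I will rely on is that $\caH$ preserves binary products: products in $(\Cat_{\infty/[1]})^{hC_2}$ are fibre products over $[1]$; the functor $\caH$ is, up to forgetting the map to $[1]$, given by $C_2$-homotopy fixed points, and so preserves all limits; and by the remark preceding \cref{uujn} the $\infty$-category $\caH([1])$ is contractible, so the fibre product over $\caH([1])$ degenerates to a plain product. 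Hence $\caH\bigl(\bar\caE_1\times_{[1]}\bar\caE_2\bigr)\simeq \caH(\caE_1)\times \caH(\caE_2)$.

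For the first functor I will feed the counit of the internal-hom adjunction in $(\Cat_{\infty/[1]})^{hC_2}$,
\[
\mathrm{ev}\colon \bar\caC \times_{[1]} \overline{\Fun(\caC,\caD)} \longrightarrow \bar\caD,
\]
into $\caH$. The product-preservation identifies the source with $\caH(\caC) \times \caH(\Fun(\caC,\caD))$, producing
\[
\caH(\caC) \times \caH(\Fun(\caC,\caD)) \longrightarrow \caH(\caD),
\]
and currying via the cartesian closed structure of $\Cat_\infty$ yields the desired $\caH(\Fun(\caC,\caD)) \to \Fun(\caH(\caC), \caH(\caD))$.

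For the second functor I will use the adjunction $\alpha\dashv\beta$ of \cref{Ex_inclusion_dualities}, with $\alpha = (-) \times [1]$ and $\beta = \Fun_{[1]}([1],-)$. Evaluating $\beta$ on $\overline{\Fun(\caC,\caD)} = \Fun^{[1]}(\bar\caC, \bar\caD)$, the description of the internal hom in \cref{rmk:internalhom_Cat_over_1} gives $\beta(\overline{\Fun(\caC,\caD)}) \simeq \Fun_{[1]}(\bar\caC, \bar\caD)$. The counit provides a morphism in $(\Cat_{\infty/[1]})^{hC_2}$,
\[
\Fun_{[1]}(\bar\caC,\bar\caD) \times [1] \;=\; \alpha\beta\bigl(\overline{\Fun(\caC,\caD)}\bigr) \longrightarrow \overline{\Fun(\caC,\caD)},
\]
to which I apply $\caH$. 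Product-preservation combined with the contractibility of $\caH([1])$ absorbs the $[1]$-factor on the left and produces the required $\caH(\Fun_{[1]}(\bar\caC,\bar\caD)) \to \caH(\Fun(\caC,\caD))$.

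The only real obstacle is verifying the commutation of $\caH$ with products in the pro-duality world in a canonical way, and specifically that this gives rise to natural transformations of functors of pro-dualities. Once the identification $\caH\bigl(\bar\caE_1\times_{[1]}\bar\caE_2\bigr)\simeq \caH(\caE_1)\times \caH(\caE_2)$ is set up naturally—which is powered entirely by $\caH([1])$ being contractible, as already observed in the paper—the rest is formal bookkeeping of currying and of the universal properties of $(-)\times[1]$ and $\Fun^{[1]}$.
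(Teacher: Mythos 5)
Your proof is correct and takes essentially the same approach as the paper: both functors are obtained by applying $\caH$ to the evaluation map $\Fun^{[1]}(\bar\caC,\bar\caD)\times_{[1]}\bar\caC\to\bar\caD$ and to the canonical map $\Fun_{[1]}(\bar\caC,\bar\caD)\times[1]\to\Fun^{[1]}(\bar\caC,\bar\caD)$, then using that $\caH$ preserves limits and $\caH([1])$ is contractible to rewrite the sources. The only cosmetic difference is that you explicitly identify the second map as the counit of the $\alpha\dashv\beta$ adjunction, whereas the paper simply asserts its existence.
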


\begin{proof}

There is a canonical map
$$ \Fun^{[1]}(\bar{\caC},\bar{\caD}) \times_{[1]}\bar{\caC} \to \bar{\caD}$$
in $(\Cat_{\infty / [1]})^{hC_2}$
that induces a functor
$$ \caH(\Fun(\caC,\caD)) \times \caH(\caC)=\caH(\Fun^{[1]}(\bar{\caC},\bar{\caD})) \times \caH(\bar{\caC}) \simeq \caH(\Fun^{[1]}(\bar{\caC},\bar{\caD})) \times_{\caH([1])} \caH(\bar{\caC}) $$$$ \simeq \caH(\Fun^{[1]}(\bar{\caC},\bar{\caD}) \times_{[1]}\bar{\caC}) \to \caH(\bar{\caD}) =\caH(\caD)$$
corresponding to a functor $ \caH(\Fun(\caC,\caD)) \to \Fun(\caH(\caC),\caH(\caD))$.
There is a canonical map
$$\Fun_{[1]}(\bar{\caC},\bar{\caD}) \times [1]\to \Fun^{[1]}(\bar{\caC},\bar{\caD})$$
in $(\Cat_{\infty / [1]})^{hC_2}$
that induces a functor
$$\caH(\Fun_{[1]}(\bar{\caC},\bar{\caD})) \simeq  \caH(\Fun_{[1]}(\bar{\caC},\bar{\caD})) \times \caH([1]) \simeq$$$$ \caH(\Fun_{[1]}(\bar{\caC},\bar{\caD}) \times [1]) \to \caH(\Fun^{[1]}(\bar{\caC},\bar{\caD}))= \caH(\Fun(\caC,\caD)).$$
\end{proof}

We now move towards studying the internal hom for $\infty$-categories with genuine pro-duality. Again, using \cite[Proposition 2.43]{realKthHSV} we know that the $\infty$-category $\Cat^\mathrm{gp}$ is cartesian closed. To give a description of the internal hom, we use the pullback square in the statement of \cite[Proposition 2.43]{realKthHSV}, which specialized to this case gives us this result.

\begin{proposition}[Internal homs for genuine pro-duality]\label{prop:internal_hom_produality}
The $\infty$-category $\Cat_{\infty}^{\mathrm{gp}}$ of $\infty$-categories with genuine pro-duality is cartesian closed. Moreover, for any two $\infty$-categories with genuine pro-duality $(\bar{\caC}\to [1], \phi\colon H\to \caH(\caC))$ and $(\bar{\caD}\to [1],\psi\colon T\to\caH(\caD))$, the internal hom $(\Fun(\caC,\caD),H_{\Fun(\caC,\caD)}\to \caH(\Fun(\caC,\caD)))$
is the $\infty$-category with genuine pro-duality
 $$(\overline{\Fun(\caC,\caD)}\to [1], \caH(\Fun(\caC,\caD)) \times_{ \Fun(H,\caH(\caD)) } \Fun(H,T) \to  \caH(\Fun(\caC,\caD)),$$
where the map in the second coordinate is just the projection.
\end{proposition}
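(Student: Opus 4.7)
The plan is to deduce the proposition from \cite[Proposition 2.43]{realKthHSV} via the equivalence $\Cat_{\infty}^{\mathrm{gp}} \simeq (\Cat_{\infty}^\gd)_{/[1]}$ established in the preceding remark. Cartesian closure is then formal: since $\Cat_\infty^\gd$ is cartesian closed by \cite[Proposition 2.43]{realKthHSV}, so is every slice, and in particular $(\Cat_\infty^\gd)_{/[1]}$. For objects $(\bar{\caC}\to [1])$ and $(\bar{\caD}\to [1])$ in the slice, the internal hom is computed as the pullback in $\Cat_\infty^\gd$ of the ambient internal hom $\uHom_{\gd}(\bar{\caC},\bar{\caD})$ along the adjoint of the structure map $\bar{\caC}\to [1]$.

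Next, I would apply \cite[Proposition 2.43]{realKthHSV} to describe $\uHom_{\gd}(\bar{\caC},\bar{\caD})$ concretely: its underlying $\infty$-category with duality is $\Fun(\bar{\caC},\bar{\caD})$ and its genuine refinement is the pullback
\begin{equation*}
\caH(\Fun(\bar{\caC},\bar{\caD}))\times_{\Fun(H,\caH(\bar{\caD}))}\Fun(H,T)\;\longrightarrow\; \caH(\Fun(\bar{\caC},\bar{\caD})).
\end{equation*}
Taking the slice pullback along $\bar{\caC}\to [1]$ then cuts out, at the level of underlying pro-dualities, precisely the fibre $\Fun^{[1]}(\bar{\caC},\bar{\caD})=\overline{\Fun(\caC,\caD)}$ of the internal hom in $(\Cat_{\infty/[1]})^{hC_2}$, by the description recalled in the remark on internal homs in $\Cat_{\infty/[1]}$. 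On genuine refinements, the same slice pullback combines with the contractibility of $\caH([1])$ (arising from the standard genuine refinement on $[1]$) to collapse to the stated pullback $\caH(\Fun(\caC,\caD))\times_{\Fun(H,\caH(\caD))}\Fun(H,T)$.

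The main step to verify is the bookkeeping of the two nested pullbacks, specifically that the $\caH$ of the pro-duality $\overline{\Fun(\caC,\caD)}$\textemdash which by definition means $\caH$ of the total duality $\Fun^{[1]}(\bar{\caC},\bar{\caD})$\textemdash naturally agrees with the $\caH(\Fun(\caC,\caD))$ appearing in the statement, and that the two projection maps in the pullback are the ones induced by $\phi\colon H\to\caH(\caC)$ and $\psi\colon T\to\caH(\caD)$ together with the canonical functor $\caH(\Fun(\caC,\caD))\to \Fun(\caH(\caC),\caH(\caD))$ constructed in the previous lemma.
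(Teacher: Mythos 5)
Your proposal diverges from the paper's route, and the divergence introduces a genuine gap. The paper establishes cartesian closure by applying \cite[Proposition 2.43]{realKthHSV} directly to the pullback presentation $\Cat_\infty^{\mathrm{gp}} = \Cat_\infty^{\mathrm{pd}} \times_{\Cat_\infty} \caR$, after having already recorded (in the preceding remark on internal homs for pro-dualities) that $\Cat_\infty^{\mathrm{pd}}=(\Cat_{\infty/[1]})^{hC_2}$ is cartesian closed via Lurie's \cite[Proposition B.3.14]{lurie.higheralgebra} and passage to homotopy fixed points. The internal-hom formula then falls out of the pullback square in the statement of \cite[Proposition 2.43]{realKthHSV}.

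You instead pass through the slice picture $\Cat_\infty^{\mathrm{gp}}\simeq(\Cat_\infty^\gd)_{/[1]}$ and assert that ``since $\Cat_\infty^\gd$ is cartesian closed, so is every slice.'' That inference is false in general: cartesian closure does not pass to slices. Already $\Cat_\infty$ is cartesian closed but not locally cartesian closed, and the cartesian closure of $\Cat_{\infty/[1]}$ specifically is the non-trivial content of \cite[Proposition B.3.14]{lurie.higheralgebra}; it is not a formal consequence. You would need an analogous dedicated statement for $(\Cat_\infty^\gd)_{/[1]}$, which you neither prove nor cite.

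Relatedly, your description of the slice internal hom as ``the pullback in $\Cat_\infty^\gd$ of $\uHom_{\gd}(\bar{\caC},\bar{\caD})$ along the adjoint of the structure map $\bar{\caC}\to[1]$'' does not produce the right object. The fiber of $\Fun(\bar{\caC},\bar{\caD})\to\Fun(\bar{\caC},[1])$ over the point classifying $\bar{\caC}\to[1]$ is $\Fun_{[1]}(\bar{\caC},\bar{\caD})$, the category of functors over $[1]$, not the relative internal hom $\Fun^{[1]}(\bar{\caC},\bar{\caD})\to[1]$, which is what the proposition asserts as the underlying pro-duality. (These two are distinguished precisely in \cref{rmk:internalhom_Cat_over_1}; the former is the category of sections of the latter.) Because of this confusion, the subsequent bookkeeping about $\caH(\Fun(\caC,\caD))$ and the collapse of the pullback is not actually performed but only asserted. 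To fix the argument, you should replace the slice reasoning by the paper's pullback presentation, or else supply an independent proof that $(\Cat_\infty^\gd)_{/[1]}$ is cartesian closed with internal hom $\overline{\Fun(\caC,\caD)}$ on underlying objects, tracking the genuine refinement through \cite[Proposition 2.43]{realKthHSV}.
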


\begin{corollary}
The restriction of the
canonical left action of $\Cat_\infty^\mathrm{gp}$ on itself (via the product) along the inclusion $\Cat_\infty^\gd \subset (\Cat_\infty^\gd)_{ / [1]}\simeq \Cat_\infty^\mathrm{gp}$ is a closed left action of $\Cat_\infty^\gd$ on $\Cat_\infty^\mathrm{gp}$.   
Moreover, for any two $\infty$-categories with genuine pro-duality $(\bar{\caC}\to [1], \phi\colon H\to \caH(\caC))$ and $(\bar{\caD}\to [1],\psi\colon T\to\caH(\caD))$, the external hom is the $\infty$-category with genuine duality 
$$(\Fun_{[1]}(\bar{\caC},\bar{\caD}), \caH(\Fun_{[1]}(\bar{\caC},\bar{\caD})) \times_{ \Fun(H,\caH(\caD)) } \Fun(H,T) \to \caH(\Fun_{[1]}(\bar{\caC},\bar{\caD}))),$$
where the map in the second coordinate is just the projection.
\end{corollary}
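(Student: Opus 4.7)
The plan is to deduce the corollary formally from the adjunction $L \dashv R \colon \Cat_\infty^\gd \rightleftarrows \Cat_\infty^\mathrm{gp}$ of Remark \ref{rmk:inclusion_gd_to_gp}, together with the internal hom description in Proposition \ref{prop:internal_hom_produality}.

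\textbf{Establishing the closed action.} First I would observe that $L$ is a product-preserving left adjoint: on the underlying pro-duality coordinate, the identity $(\caE \times [1]) \times_{[1]} (\caF \times [1]) \simeq (\caE \times \caF) \times [1]$ gives $L(\caE) \times L(\caF) \simeq L(\caE \times \caF)$, while on the genuine refinement coordinate the product in $(\Cat_\infty^\gd)_{/[1]}$ is computed componentwise and $\caH([1])$ is contractible, so refinements multiply in the expected way. Consequently $L$ is strong symmetric monoidal for the cartesian structures, so restriction yields a left action of $\Cat_\infty^\gd$ on $\Cat_\infty^\mathrm{gp}$ via $\caE \otimes \bar\caC \coloneqq L(\caE) \times \bar\caC$. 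This action is closed because for fixed $\bar\caC$ the functor $L(-) \times \bar\caC$ is the composite of the two left adjoints $L$ and $(-) \times \bar\caC$, whose right adjoints are $R$ and $\Fun^{[1]}(\bar\caC, -)$ (the latter from Proposition \ref{prop:internal_hom_produality}). Thus the external hom is $R \circ \Fun^{[1]}(\bar\caC, -)$.

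\textbf{Computing the external hom.} Second, I would unpack $R(\Fun^{[1]}(\bar\caC, \bar\caD))$ using the description $R \simeq \beta \times_{\Cat_\infty} \caR$ from Remark \ref{rmk:inclusion_gd_to_gp}(2), where $\beta = \Fun_{[1]}([1], -)$ is the right adjoint of $\alpha = (-) \times [1]$ from Example \ref{Ex_inclusion_dualities}. Cartesian closedness of $\Cat_{\infty / [1]}$ gives
$$\beta\bigl(\Fun^{[1]}(\bar\caC,\bar\caD)\bigr) \simeq \Fun_{[1]}\bigl([1], \Fun^{[1]}(\bar\caC,\bar\caD)\bigr) \simeq \Fun_{[1]}\bigl([1] \times_{[1]} \bar\caC, \bar\caD\bigr) \simeq \Fun_{[1]}(\bar\caC, \bar\caD),$$
producing the claimed underlying $\infty$-category with duality. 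For the refinement, the fiber product defining $R$ over $\Cat_\infty$ forces us to base-change the refinement from Proposition \ref{prop:internal_hom_produality} along the canonical comparison $\caH(\Fun_{[1]}(\bar\caC,\bar\caD)) \to \caH(\Fun(\caC,\caD))$ supplied by the lemma immediately preceding Proposition \ref{prop:internal_hom_produality}. A routine pullback chase then collapses the iterated fiber product to $\caH(\Fun_{[1]}(\bar\caC, \bar\caD)) \times_{\Fun(H, \caH(\caD))} \Fun(H, T)$, exactly as in the statement.

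\textbf{Main obstacle.} The conceptual content is routine categorical bookkeeping; the main difficulty lies in the final compatibility check of the previous step. One must verify that applying $R$ to the specific refinement of Proposition \ref{prop:internal_hom_produality} produces the pullback formula asserted in the corollary, rather than introducing additional corrections — i.e., that the second factor of the fiber product defining $\Cat_\infty^\mathrm{gp}$ is transported by $R$ precisely via base change along the canonical comparison map. This is best handled by the universal property characterization of both the internal hom and of $R$, ensuring that the two hermitian-object sources (from the internal hom and from $\beta$) interact coherently through the naturality of the lemma on hermitian objects of internal homs.
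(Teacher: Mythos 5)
Your proposal is correct and takes essentially the same route as the paper, whose proof is a two-line sketch citing cartesian closedness of $\Cat_\infty^\mathrm{gp}$ (\cref{prop:internal_hom_produality}) and the right adjoint to the inclusion $\Cat_\infty^\gd \hookrightarrow \Cat_\infty^\mathrm{gp}$ (\cref{rmk:inclusion_gd_to_gp}). You merely fill in the details the paper leaves implicit: that the inclusion is product-preserving, that the external hom is $R\circ\Fun^{[1]}(\bar\caC,-)$, that $\beta\bigl(\Fun^{[1]}(\bar\caC,\bar\caD)\bigr)\simeq\Fun_{[1]}(\bar\caC,\bar\caD)$ via the internal-hom adjunction, and that base-changing the genuine refinement along $\caH(\Fun_{[1]}(\bar\caC,\bar\caD))\to\caH(\Fun(\caC,\caD))$ collapses the iterated pullback to the stated formula.
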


\begin{proof}
This follows directly from \cref{prop:internal_hom_produality} which shows that the $\infty$-category $\Cat_\infty^\mathrm{gp}$ is cartesian closed, and the fact that the inclusion $\Cat_\infty^\mathrm{gd}\to\Cat_\infty^\mathrm{gp}$ admits a right adjoint, see \cref{rmk:inclusion_gd_to_gp}. 
\end{proof}

\begin{remark}
Both adjoints in the localization \cref{locx}, as well the inclusion
$\Cat_\infty^\gd \hookrightarrow (\Cat_\infty^\gd)_{ / [1]}$,
preserve the internal homs. 
\end{remark}

\subsection{A different model of $\Cat_\infty^{\mathrm{pd}}$ and $\Cat_\infty^{\mathrm{gp}}$}

In the following we construct a different model for $\infty$-categories 
with pro-duality and genuine pro-duality.
We use this model to give another description of the internal hom
in $\Cat_\infty^{\mathrm{pd}}$ and $\Cat_\infty^{\mathrm{gp}}$,
which we use to analyze the hermitian $Q$-construction presented in \cref{hermq}.

\begin{notation}
We now consider the pullback $\Sym\coloneqq\Cat_{\infty} \times_{ \Cat_{\infty}[C_2]} \caR[C_2]$; see \cite{Herm}. 
\end{notation}

An object of $\Sym$ is a pair $(\caC, \phi\colon \caV \to \widetilde{\caC \times \caC})$,
where $\phi$ is a $C_2$-equivariant right fibration.
The $C_2$-equivariant right fibration $\phi$
classifies a $C_2$-equivariant functor $\widetilde{\caC^\op \times \caC^\op} \to \caS,$ a symmetric functor in the sense of \cite[Definition 5.9]{hls}. Thus, an object of $\Sym$ corresponds to an $\infty$-category equipped with a symmetric functor.

\begin{proposition}\label{prop:equiv_pd_sym}There is a canonical equivalence
$$\Cat_\infty^\mathrm{pd} \simeq \Sym$$
over $\Cat_\infty$ that sends an $\infty$-category with pro-duality $\bar{\caC} \to [1] $ to the pair $$(\caC, \widetilde{\caC \times \caC} \times_{\widetilde{\overline{\caC} \times \overline{\caC}}} \Tw(\bar{\caC}) \longrightarrow \widetilde{\caC \times \caC}).$$    
\end{proposition}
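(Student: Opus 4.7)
The plan is to construct the functor directly from the formula using the twisted arrow $\infty$-category, and then prove it is an equivalence by identifying both sides with $C_2$-homotopy fixed points of an $\infty$-category of ``correspondences'' between $\infty$-categories. For the construction, observe that for any $\infty$-category $\caD$ the twisted arrow $\infty$-category defines a canonical right fibration $\Tw(\caD) \to \caD^\op \times \caD$ carrying a natural $C_2$-action (reversing arrows). Applied to $\bar{\caC}$, the duality on $\bar{\caC}$ allows us to view $\Tw(\bar{\caC})$ as a $C_2$-equivariant right fibration over $\widetilde{\bar{\caC} \times \bar{\caC}}$, and the canonical $C_2$-equivariant map $\widetilde{\caC \times \caC} \to \widetilde{\bar{\caC} \times \bar{\caC}}$ induced by $\caC = \bar{\caC}_0$ together with the duality produces, upon pullback, the right fibration in the statement. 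Naturality in $\bar{\caC}$ assembles this into the functor $\Cat_\infty^{\mathrm{pd}} \to \Sym$ over $\Cat_\infty$.

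To show this is an equivalence, we use the classical description of $\Cat_{\infty/[1]}$ as an $\infty$-category $\mathrm{Corr}$ of correspondences, whose objects are triples $(\caA, \caB, \caV)$ with $\caV$ a right fibration encoding the profunctor from $\caA$ to $\caB$, the equivalence sending $\bar{\caC} \to [1]$ to its pair of fibers and the appropriate restriction of $\Tw(\bar{\caC})$. Both sides carry natural $C_2$-actions (on the left, via $\op$ on $\Cat_\infty$ together with the flip of $[1]$; on the right, swapping $\caA$ and $\caB$ compatibly with $\op$), and the equivalence is $C_2$-equivariant by functoriality of $\Tw$ with respect to opposites. Passing to $C_2$-homotopy fixed points, the left side is by definition $\Cat_\infty^{\mathrm{pd}}$, while a $C_2$-fixed triple on the right is forced to satisfy $\caB \simeq \caA^\op$; setting $\caC := \caA$, what remains is the data of $\caC$ together with a $C_2$-equivariant right fibration over $\widetilde{\caC \times \caC}$, i.e., an object of $\Sym$.

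It remains to identify the formula. Under the equivalence $\Cat_{\infty/[1]} \simeq \mathrm{Corr}$, the appropriate restriction of $\Tw(\bar{\caC})$ is computed as the pullback $\widetilde{\caC \times \caC} \times_{\widetilde{\bar{\caC} \times \bar{\caC}}} \Tw(\bar{\caC})$, since the twisted arrow construction commutes with base change along inclusions of subcategories into the ambient category. Compatibility with the forgetful functors to $\Cat_\infty$ is automatic, as both send $\bar{\caC} \to [1]$ to $\caC$. The main obstacle will be the $C_2$-equivariance of the correspondence equivalence: while the un-equivariant statement is standard, its equivariant enhancement requires careful tracking of the involution on $\Tw$ and is essentially the content of the material in \cite{Herm} on which this proposition builds.
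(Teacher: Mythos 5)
Your proposal takes essentially the same route as the paper: both pass to $C_2$-homotopy fixed points of the correspondence equivalence $\Cat_{\infty/[1]} \simeq (\Cat_{\infty} \times \Cat_{\infty}) \times_{\Cat_{\infty}} \caR$, and both defer the genuinely difficult step\textemdash promoting that equivalence to a $C_2$-equivariant one\textemdash to \cite{Herm} (the paper cites \cite[Proposition 5.23]{Herm} for exactly this). Your description of the fixed points on the correspondence side as triples with ``$\caB \simeq \caA^\op$'' is acceptable shorthand, though in a full write-up one should treat the homotopy fixed point as structure rather than a condition, which is again why citing \cite{Herm} for the precise $C_2$-equivariant statement is the efficient move.
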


\begin{proof}
Recall that, by definition, $\infty$-categories with pro-duality are $\Cat_\infty^\mathrm{pd}=(\Cat_{\infty / [1]})^{hC_2}$. By \cite[Proposition 5.23]{Herm} we know that there is a canonical $C_2$-equivariant equivalence
$$\Psi\colon \Cat_{\infty / [1]} \to (\Cat_{\infty} \times \Cat_{\infty}) \times_{ \Cat_{\infty}} \caR$$ that sends a functor $ \caM \to [1]$
to the tuple $$(\caM_0, \caM_1, (\caM_0 \times \caM_1^\op) \times_{(\caM \times \caM^\op)} \Tw(\caM) \to \caM_0 \times \caM_1^\op),$$
where $\Cat_{\infty} $ and $\caR$ on the right hand side carry the trivial action.

Thus $\Psi$ induces on homotopy $C_2$-fixed points an equivalence
$$(\Cat_{\infty / [1]})^{hC_2} \simeq \Cat_{\infty} \times_{ \Cat_{\infty}[C_2]} \caR[C_2]$$
that sends an $\infty$-category with pro-duality $\bar{\caC} \to [1] $ to the pair $$(\caC, \widetilde{\caC \times \caC} \times_{\widetilde{\overline{\caC} \times \overline{\caC}}} \Tw(\bar{\caC}) \to \widetilde{\caC \times \caC}).$$
\end{proof}

\begin{notation}
We consider the pullback $$\mathrm{Qu}\coloneqq \Sym \times_{\Cat_{\infty}} \caR$$
 along evaluation at the target and the functor taking homotopy $C_2$-fixed points of the source of a $C_2$-equivariant right fibration. 
\end{notation}

\begin{corollary}\label{equiv_pg_Qu}
There is a canonical equivalence
$$\Cat_{\infty}^\mathrm{gp}= \Cat_{\infty}^\mathrm{pd} \times_{\Cat_{\infty}} \caR \simeq \mathrm{Qu}$$
that sends $(\bar{\caC} \to [1], \phi\colon H \to \caH(\caC))$
to $(\caC, \caV \to \widetilde{\caC \times \caC}, \phi\colon H \to \caV^{hC_2})$.
\end{corollary}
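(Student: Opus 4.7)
The plan is to deduce this corollary by base-changing the equivalence of Proposition \ref{prop:equiv_pd_sym} along the cospan that defines $\Cat_\infty^\mathrm{gp}$, and checking that the two structural maps to $\Cat_\infty$ are intertwined by that equivalence. Concretely, $\Cat_\infty^\mathrm{gp}$ is defined as the pullback $\Cat_\infty^\mathrm{pd} \times_{\Cat_\infty} \caR$ formed with the functor $\caH \colon \Cat_\infty^\mathrm{pd} \to \Cat_\infty$ on one side and evaluation at the target $\caR \to \Cat_\infty$ on the other; similarly, $\mathrm{Qu} = \Sym \times_{\Cat_\infty} \caR$ is formed with the functor $\Sym \to \Cat_\infty$ sending $(\caC, \caV \to \widetilde{\caC \times \caC})$ to $\caV^{hC_2}$ and the same $\caR \to \Cat_\infty$. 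Since pullbacks are computed levelwise, it suffices to produce a commutative triangle over $\Cat_\infty$ whose top edge is the equivalence $\Psi \colon \Cat_\infty^\mathrm{pd} \simeq \Sym$ of Proposition \ref{prop:equiv_pd_sym}.

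The crux is therefore to verify that, under $\Psi$, the hermitian-objects functor $\caH$ corresponds to the $C_2$-fixed-points functor $\caV \mapsto \caV^{hC_2}$. For this, recall from Remark \ref{uujn} that hermitian objects of a pro-duality lie over the unique hermitian object $0 \in [1]$, so $\caH(\caC)$ is the fiber of $\caH(\bar{\caC}) \to \caH([1])$ over that point. Using the twisted-arrow model of hermitian objects, $\caH(\bar{\caC}) \simeq \Tw(\bar{\caC})^{hC_2}$ and $\caH([1]) \simeq \Tw([1])^{hC_2}$, and taking fibers intertwines with taking homotopy fixed points, so
\[
\caH(\caC) \simeq \bigl(\widetilde{\caC \times \caC} \times_{\widetilde{\overline{\caC} \times \overline{\caC}}} \Tw(\bar{\caC})\bigr)^{hC_2} = \caV^{hC_2},
\]
where $\caV$ is precisely the object produced by $\Psi$. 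This compatibility is natural in $\bar{\caC}$, giving the desired commuting triangle over $\Cat_\infty$.

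Granting this identification, the equivalence follows by pasting pullback squares: applying $(-)\times_{\Cat_\infty}\caR$ to the equivalence $\Cat_\infty^\mathrm{pd} \simeq \Sym$ over $\Cat_\infty$ yields the required equivalence $\Cat_\infty^\mathrm{gp} \simeq \mathrm{Qu}$. Chasing an object through the composite gives the stated formula: $(\bar{\caC} \to [1], \phi \colon H \to \caH(\caC))$ maps first to $(\caC, \caV \to \widetilde{\caC \times \caC})$ with $\caV$ as above, and then together with $\phi$ (which, via the identification $\caH(\caC)\simeq \caV^{hC_2}$, becomes a right fibration $H \to \caV^{hC_2}$) to the triple $(\caC, \caV \to \widetilde{\caC \times \caC}, \phi \colon H \to \caV^{hC_2}) \in \mathrm{Qu}$.

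The main obstacle is the second paragraph, i.e.\ promoting the pointwise identification $\caH(\caC) \simeq \caV^{hC_2}$ to a natural equivalence of functors $\Cat_\infty^\mathrm{pd} \to \Cat_\infty$ compatible with $\Psi$; all the remaining bookkeeping is formal manipulation of pullbacks of $\infty$-categories.
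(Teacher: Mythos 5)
Your proof fills in exactly what the paper leaves implicit: base-changing the equivalence $\Psi$ of \cref{prop:equiv_pd_sym} along the two defining pullback squares, after verifying that $\Psi$ intertwines the hermitian-objects functor $\caH$ with the functor $\caV \mapsto \caV^{hC_2}$. That intertwining is precisely the content of \cref{uujnhn} (which the paper states just after the corollary), and your derivation of it — via the twisted-arrow model and the fact that $(-)^{hC_2}$ preserves pullbacks — matches the paper's; the proposal is correct and takes essentially the same approach.
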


\cref{prop:equiv_pd_sym} allows us to obtain a description of the hermitian objects of an $\infty$-category with pro-duality as follows.

\begin{lemma}\label{uujnhn}
Let $\bar{\caC} \to [1]$ be an $\infty$-category with pro-duality
and $\caV \to \widetilde{\caC \times \caC}$ the associated $C_2$-equivariant right fibration.
There is a canonical equivalence
$$ \caH(\caC) \simeq \caV^{hC_2}.$$		
\end{lemma}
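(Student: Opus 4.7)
The plan is to directly compute $\caV^{hC_2}$ using the explicit pullback description of $\caV$ from \cref{prop:equiv_pd_sym} and to compare it with $\caH(\caC)$ via \cref{uujn}.

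First I would recall that by \cref{prop:equiv_pd_sym} we have a pullback of $C_2$-equivariant objects
$$\caV \simeq \widetilde{\caC \times \caC} \times_{\widetilde{\bar{\caC} \times \bar{\caC}}} \Tw(\bar{\caC}),$$
where $\bar{\caC}$ carries its $C_2$-action coming from the duality and $\caC$ is viewed with trivial $C_2$-action. Since the homotopy $C_2$-fixed points functor preserves limits, applying it yields
$$\caV^{hC_2} \simeq (\widetilde{\caC \times \caC})^{hC_2} \times_{(\widetilde{\bar{\caC} \times \bar{\caC}})^{hC_2}} \Tw(\bar{\caC})^{hC_2}.$$

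Next I would identify the three terms appearing in this pullback. The key formal observation is that for any $C_2$-object $Z$ the cotensor/fixed-point adjunction in the $\Spc^{C_2}$-enriched setting yields a natural equivalence $(\widetilde{Z \times Z})^{hC_2} = (Z^{C_2})^{hC_2} \simeq Z$, since mapping equivariantly out of the free transitive $C_2$-set $C_2$ returns the underlying object. Applied to $\caC$ (with trivial action) and $\bar{\caC}$ (with its duality-induced action), this gives $(\widetilde{\caC \times \caC})^{hC_2} \simeq \caC$ and $(\widetilde{\bar{\caC} \times \bar{\caC}})^{hC_2} \simeq \bar{\caC}$, with the map between them identified with the inclusion of the fiber $\caC = \bar{\caC}_0 \hookrightarrow \bar{\caC}$. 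For the third term, $\Tw(\bar{\caC})^{hC_2} \simeq \caH(\bar{\caC}) = \caH(\caC)$ is the standard description of hermitian objects of a category with duality as the $C_2$-homotopy fixed sections of the twisted arrow right fibration.

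Assembling these identifications, the pullback becomes
$$\caV^{hC_2} \simeq \caC \times_{\bar{\caC}} \caH(\caC),$$
and the proof concludes by invoking \cref{uujn}, which identifies this fiber product with $\caH(\caC)$ itself, since every hermitian object of $\bar{\caC}$ lies over the unique hermitian object $0 \in [1]$. The main obstacle I anticipate is verifying that the identification $(Z^{C_2})^{hC_2} \simeq Z$ is suitably natural in $Z$ and compatible with the structure maps of the pullback square, but this is a formal consequence of the $\Spc^{C_2}$-enriched cotensor/fixed-point adjunction and does not introduce substantive difficulty.
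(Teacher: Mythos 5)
Your proof is correct and is essentially the paper's own argument read in the opposite direction: both start from the pullback description $\caV \simeq \widetilde{\caC \times \caC} \times_{\widetilde{\bar{\caC} \times \bar{\caC}}} \Tw(\bar{\caC})$ of \cref{prop:equiv_pd_sym}, use that $(-)^{hC_2}$ preserves limits, identify $(\widetilde{Z\times Z})^{hC_2}\simeq Z$ and $\Tw(\bar{\caC})^{hC_2}\simeq\caH(\bar{\caC})$, and conclude with the observation (\cref{uujn}) that $\caC\times_{\bar{\caC}}\caH(\bar{\caC})\simeq\caH(\bar{\caC})$ because hermitian objects lie over $\caC$. The only stylistic difference is that you compute $\caV^{hC_2}$ and land on $\caH(\caC)$, while the paper unwinds $\caH(\caC)$ and lands on $\caV^{hC_2}$; the intermediate identifications are identical.
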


\begin{proof}
We consider the following chain of equivalences.
\begin{align*}
     \caH(\caC)     &\coloneqq\caH(\bar{\caC})\\
                    &\simeq \caC \times_{\bar{\caC}} \caH(\bar{\caC})\\
                    & \simeq (\widetilde{\caC \times \caC} \times_{ \widetilde{\overline{\caC} \times \overline{\caC}}} \Tw(\bar{\caC}))^{hC_2}\\
                    &\simeq \caV^{hC_2}.
\end{align*}
The first equivalence holds because every hermitian object of $\bar{\caC}$ lies over an object of $\caC \subset \bar{\caC}$, for the second equivalence we use that taking homotopy $C_2$-fixed points preserves limits and since $\caH(\bar{\caC})= \Tw(\bar{\caC})^{hC_2}$, finally the third equivalence follows from the construction of the equivalence of \cref{prop:equiv_pd_sym}.
\end{proof}

\subsection*{Internal homs in $\Sym$ and $\mathrm{Qu}$} These descriptions, although technical, will aid in the comparison we are after.

Next we consider a variation of the $\infty$-category $\mathrm{Qu}$, that is slightly more general.

\begin{notation}
Let $$\Xi\coloneqq \Cat_{\infty} \times_{\Cat_{\infty}[C_2]} \Fun([1],\Cat_{\infty})[C_2] \times_{\Cat_{\infty}} \Fun([1],\Cat_{\infty})$$ be the pullbacks of the functors $$\Cat_{\infty} \to  \Cat_{\infty}[C_2]\hspace{1em}\text{ that sends }\hspace{1em}\caC \mapsto \widetilde{\caC \times \caC},$$
and evaluation at the target 
$$\ev_1\colon \Fun([1],\Cat_{\infty})[C_2] \to \Cat_{\infty}[C_2]$$
for the first pullback, and the following functors for the second pullback
$$\Fun([1],\Cat_{\infty})[C_2] \xrightarrow{\ev_0} \Cat_{\infty}[C_2] \xrightarrow{(-)^{hC_2}} \Cat_{\infty}, $$
which evaluates at the source, and evaluation at the target
$$\ev_1\colon \Fun([1],\Cat_{\infty}) \to \Cat_{\infty}.$$
\end{notation}

\begin{remark}
The $\infty$-category $\Xi$ admits small limits as a pullback of $\infty$-categories with small limits along functors preserving small limits.  
\end{remark}

In \cref{lemwa} we show that $\infty$-category $\Xi$ is cartesian closed. In order to state it, we introduce the following notation, that 

\begin{notation}Let $(\caB, \alpha, \rho), (\caC, \beta, \phi), (\caD, \gamma, \psi) \in \Xi$.
Let $$ \mathrm{Mor}_\Xi((\caC, \beta,\phi), (\caD, \gamma,\psi))\coloneqq $$$$\Fun(\caC,\caD) \times_{\Fun(\caV,\widetilde{\caD \times \caD})^{hC_2}} \Fun(\caV, \caW)^{hC_2} \times_{\Fun(H, \caW^{hC_2})} \Fun(H,H').$$
\end{notation}

\begin{remark}\label{rewqy}
The maximal subspace $ \mathrm{Mor}_\Xi((\caC, \beta,\phi), (\caD, \gamma,\psi))^\simeq$ is the mapping space $\map_\Xi((\caC, \beta,\phi), (\caD, \gamma,\psi)).$
\end{remark}

\begin{lemma}\label{lemwa}
Let $(\caB, \alpha, \rho), (\caC, \beta, \phi), (\caD, \gamma, \psi) \in \Xi$. There is a canonical equivalence
$$ \map_\Xi((\caB, \alpha, \rho) \times (\caC, \beta,\phi), (\caD, \gamma,\psi))
\simeq \map_\Xi((\caB, \alpha, \rho) , (\caD, \gamma,\psi)^{(\caC, \beta,\phi)}),$$
where the internal hom $(\caD, \gamma,\psi)^{(\caC, \beta,\phi)}\in\Xi$ is the triple $(\Fun(\caC,\caD), \lambda, \sigma)$ where $\lambda$ and $\sigma$ are the projections
$$\lambda\colon \widetilde{\Fun(\caC,\caD) \times \Fun(\caC,\caD)} \times_{\Fun(\caV,\widetilde{\caD \times \caD})} \Fun(\caV, \caW) \longrightarrow \widetilde{\Fun(\caC,\caD) \times \Fun(\caC,\caD)},\text{ and}$$
$$\sigma\colon \mathrm{Mor}_\Xi((\caC, \beta,\phi), (\caD, \gamma,\psi)) \to\Fun(\caC,\caD) \times_{\Fun(\caV,\widetilde{\caD \times \caD})^{hC_2}} \Fun(\caV, \caW)^{hC_2}.$$
\end{lemma}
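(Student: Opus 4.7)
The plan is to verify the stated cartesian-closed adjunction directly, by expanding both mapping objects $\mathrm{Mor}_\Xi$ via the pullback definition of $\Xi$ and then invoking cartesian closedness of each of the three component $\infty$-categories ($\Cat_\infty$, $\Fun([1],\Cat_\infty)[C_2]$, and $\Fun([1],\Cat_\infty)$). First I would observe that products in $\Xi$ are computed componentwise, since each of the four structure functors in the pullback defining $\Xi$ preserves finite products. In particular, the product $(\caB,\alpha,\rho)\times(\caC,\beta,\phi)$ is $(\caB\times\caC,\alpha\times\beta,\rho\times\phi)$, using the canonical identifications $\widetilde{(\caB\times\caC)\times(\caB\times\caC)}\simeq \widetilde{\caB\times\caB}\times\widetilde{\caC\times\caC}$ $C_2$-equivariantly and $(\caU\times\caV)^{hC_2}\simeq \caU^{hC_2}\times\caV^{hC_2}$.

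Next I would check that $(\Fun(\caC,\caD),\lambda,\sigma)$ is a legitimate object of $\Xi$. The domain $\caV'$ of $\lambda$ carries a natural $C_2$-action (swap on the factor $\widetilde{\Fun(\caC,\caD)\times\Fun(\caC,\caD)}$ combined with the $C_2$-equivariance of the pullback data along $\beta$ and $\gamma$), and the identification
$$(\caV')^{hC_2}\simeq \Fun(\caC,\caD)\times_{\Fun(\caV,\widetilde{\caD\times\caD})^{hC_2}}\Fun(\caV,\caW)^{hC_2}$$
follows by commuting $(-)^{hC_2}$ with the pullback and using $\widetilde{X\times X}^{hC_2}\simeq X$; this matches the codomain of $\sigma$, so the triple is well-defined. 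The central step is then the mapping-object computation: applying the formula for $\mathrm{Mor}_\Xi$ preceding \cref{rewqy} together with the componentwise product formula, the left-hand side expands as
\begin{align*}
\Fun(\caB\times\caC,\caD)&\times_{\Fun(\caU\times\caV,\widetilde{\caD\times\caD})^{hC_2}}\Fun(\caU\times\caV,\caW)^{hC_2}\\
&\quad\times_{\Fun(K\times H,\caW^{hC_2})}\Fun(K\times H,H').
\end{align*}
I would then apply cartesian closedness of $\Cat_\infty$ in each factor via $\Fun(X\times Y,Z)\simeq \Fun(X,\Fun(Y,Z))$, use that $\Fun(\caU,-)^{hC_2}$ and $\Fun(K,-)$ commute with pullbacks, and rearrange the iterated pullback so that the remaining $\Fun(\caU,-)^{hC_2}$ and $\Fun(K,-)$ factors are taken of exactly those pullbacks appearing in the definitions of $\lambda$ and $\sigma$. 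The resulting expression is precisely $\mathrm{Mor}_\Xi((\caB,\alpha,\rho),(\Fun(\caC,\caD),\lambda,\sigma))$. Passing to maximal subspaces and invoking \cref{rewqy} yields the claimed equivalence of mapping spaces.

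The hard part will be ensuring that the $C_2$-actions and tilde operation remain coherent throughout the rearrangement—specifically, confirming that the natural maps appearing in the iterated pullback match the structure maps $\lambda$ and $\sigma$ of the proposed internal hom rather than some other map into the same target space. This coherence follows from the naturality of the exponential transposes in $\Cat_\infty$: one tracks how post- and pre-composition with $\beta,\gamma,\phi,\psi$ on the left-hand side correspond, under the transpose, to the structure maps appearing in the definitions of $\lambda$ and $\sigma$ (for instance, post-composition with $\gamma$ and pre-composition with $\alpha\times\beta$ on $\Fun(\caU\times\caV,\caW)$ transpose to the projection maps defining $\lambda$ on the $\caU$-side).
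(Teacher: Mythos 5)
Your proposal is correct and takes essentially the same route as the paper, which simply asserts that the equivalence follows from the definitions; you are spelling out exactly that unwinding (componentwise products in the iterated pullback $\Xi$, exponential transposes in $\Cat_\infty$ applied factorwise, and commuting $(-)^{hC_2}$ past the pullbacks to match the structure maps $\lambda$ and $\sigma$). Your identification $(\widetilde{X\times X})^{hC_2}\simeq X$ and the check that the four structure functors preserve finite products are precisely the ingredients that make the "immediate" verification go through.
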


\begin{proof}
The equivalence follows immediately from the definitions.	
\end{proof}

Since we have just proved that the $\infty$-category $\Xi$ is cartesian closed in \cref{lemwa}, we know that  $\mathrm{Qu} \subset \Xi$ is cartesian closed as well.
Under the equivalence in \cref{equiv_pg_Qu} the internal homs correspond, which gives us the following corollary.

\begin{corollary}
Let $(\caC, H_\caC), (\caD, H_\caC)$ be 
$\infty$-categories with genuine pro-duality
corresponding to $(\caC, \beta,\phi), (\caD, \gamma,\psi)\in \Xi.$ There is a canonical equivalence
$$ H_{\Fun(\caC,\caD)} 
\simeq \mathrm{Mor}_\Xi((\caC, \beta,\phi), (\caD, \gamma,\psi)).$$

\end{corollary}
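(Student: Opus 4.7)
The plan is to chase the two given descriptions of the internal hom across the equivalence $\Cat_\infty^\mathrm{gp} \simeq \mathrm{Qu}$ of \cref{equiv_pg_Qu}, and check they coincide after inclusion into $\Xi$. More precisely, I would first note that $\mathrm{Qu} \subset \Xi$ is closed under the internal hom of $\Xi$: for $(\caC,\beta,\phi),(\caD,\gamma,\psi) \in \mathrm{Qu}$, both $\lambda$ and $\sigma$ of \cref{lemwa} are right fibrations. Indeed, since $\gamma\colon \caW \to \widetilde{\caD \times \caD}$ is a $C_2$-equivariant right fibration, so is $\Fun(\caV,\caW) \to \Fun(\caV,\widetilde{\caD\times\caD})$, and $\lambda$ is a pullback thereof; similarly $\sigma$ is a pullback of $\Fun(H,H') \to \Fun(H,\caW^{hC_2})$, which is a right fibration because $\psi$ is. By general principles this makes $\mathrm{Qu} \simeq \Cat_\infty^\mathrm{gp}$ an equivalence of cartesian closed $\infty$-categories, and the corollary reduces to matching the formula of \cref{prop:internal_hom_produality} with $\mathrm{Mor}_\Xi$ of the definition preceding \cref{rewqy}.

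The second step is the explicit comparison. By \cref{prop:internal_hom_produality},
$$H_{\Fun(\caC,\caD)} \simeq \caH(\Fun(\caC,\caD)) \times_{\Fun(H,\caH(\caD))} \Fun(H,T),$$
with $T = H'$ under the equivalence of \cref{equiv_pg_Qu}. Applying \cref{uujnhn} gives $\caH(\caD) \simeq \caW^{hC_2}$, and applying it to the pro-duality assigned to $\Fun(\caC,\caD)$ gives
$$\caH(\Fun(\caC,\caD)) \simeq \bigl(\widetilde{\Fun(\caC,\caD) \times \Fun(\caC,\caD)} \times_{\Fun(\caV,\widetilde{\caD\times\caD})} \Fun(\caV,\caW)\bigr)^{hC_2}.$$
Since $(-)^{hC_2}$ preserves limits and $\bigl(Z^{C_2}\bigr)^{hC_2} \simeq Z$, this collapses to $\Fun(\caC,\caD) \times_{\Fun(\caV,\widetilde{\caD\times\caD})^{hC_2}} \Fun(\caV,\caW)^{hC_2}$. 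Substituting back yields exactly the three-fold pullback in the definition of $\mathrm{Mor}_\Xi((\caC,\beta,\phi),(\caD,\gamma,\psi))$.

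The only place that requires genuine care — more a bookkeeping subtlety than a true obstacle — is the second invocation of \cref{uujnhn}: one must check that the $C_2$-equivariant right fibration assigned to the pro-duality $\overline{\Fun(\caC,\caD)} \to [1]$ (under the equivalence of \cref{prop:equiv_pd_sym}) is precisely the projection $\lambda$ of \cref{lemwa}. This should follow from naturality of the equivalence $\Psi$ of \cite[Proposition 5.23]{Herm} applied to the internal hom $\Fun^{[1]}(\bar{\caC},\bar{\caD})$, together with the description (2) of internal homs in $\Cat_{\infty/[1]}$ recalled in \cref{rmk:internalhom_Cat_over_1}. Once that identification is in hand, the rest is a mechanical diagram chase.
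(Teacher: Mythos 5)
Your proof is correct, but it takes a substantially different route from the paper's.

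The paper's proof is a one-line adjunction argument: it first observes that for any $(\caD,\gamma,\psi)\in\Xi$ the genuine refinement is corepresented by the terminal object, $H_\caD\simeq\mathrm{Mor}_\Xi(\ast,(\caD,\gamma,\psi))$ (plug $\ast$ into the defining three-fold pullback and watch everything collapse; the paper writes $H_\caC$ here, which is a typo for $H_\caD$). Then
$$H_{\Fun(\caC,\caD)}\simeq\mathrm{Mor}_\Xi(\ast,(\caD,\gamma,\psi)^{(\caC,\beta,\phi)})\simeq\mathrm{Mor}_\Xi((\caC,\beta,\phi),(\caD,\gamma,\psi))$$
by \cref{lemwa}. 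No translation back to the $\Cat_\infty^{\mathrm{gp}}$ side, no invocation of \cref{prop:internal_hom_produality}, no use of \cref{uujnhn}.

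You instead unwind the description of $H_{\Fun(\caC,\caD)}$ furnished by \cref{prop:internal_hom_produality} on the $\Cat_\infty^{\mathrm{gp}}$ side and match it term-by-term against $\mathrm{Mor}_\Xi$ on the $\Xi$ side, mediating the comparison through \cref{uujnhn} and the fact that $(-)^{hC_2}$ commutes with pullbacks. This is more work but buys something the paper's argument does not make visible, namely an explicit check that the abstract corepresentability is compatible with the concrete formula of \cref{prop:internal_hom_produality}; and your observation that $\mathrm{Qu}\subset\Xi$ is closed under internal hom because $\lambda,\sigma$ are pullbacks of right fibrations fills in a claim the paper asserts without comment.

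Two things deserve flagging. The collapse $(\widetilde{Z\times Z})^{hC_2}\simeq Z$ is fine — the paper itself relies on exactly this in the third line of the proof of \cref{uujnhn} — so you are using the authors' own conventions consistently. The genuine deferral in your argument is the identification of the $C_2$-equivariant right fibration attached to the pro-duality of $\overline{\Fun(\caC,\caD)}$ with the projection $\lambda$ of \cref{lemwa}. You correctly single this out and sketch how to get it (naturality of $\Psi$ from \cite[Proposition 5.23]{Herm} plus \cref{rmk:internalhom_Cat_over_1}), but it is precisely the work that the paper's Yoneda-style argument lets one avoid doing, which is why the paper's route is the shorter one.
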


\begin{proof}
From the definitions, it follows that there is an equivalence
$$H_\caC \simeq \mathrm{Mor}_\Xi(\ast, (\caD, \gamma,\psi)),$$
where $\ast$ is the final object of $\Xi$, and so a canonical equivalence 
$$ H_{\Fun(\caC,\caD)} 
\simeq \mathrm{Mor}_\Xi(*,(\caD, \gamma,\psi)^{(\caC, \beta, \phi)}) \simeq \mathrm{Mor}_\Xi((\caC, \beta,\phi), (\caD, \gamma,\psi)). $$	
\end{proof}

\subsection*{Cotensors on $\mathrm{Qu}$}

\begin{lemma}\label{adj_Cat_Xi} There is an adjunction

\begin{tz}
\node[](1)    {$\Cat_\infty$};
\node[right of=1,xshift=1.5cm](2)    {$\Xi$};
\node[la] at ($(1.east)!0.5!(2.west)$) {$\bot$}; 
\draw[->] ($(1.east)+(0,5pt)$) to node[above,la]{$\rho$} ($(2.west)+(0,5pt)$);
\draw[->] ($(2.west)-(0,5pt)$) to node[below,la]{} ($(1.east)-(0,5pt)$);

\end{tz}
where the functor $\rho\colon\Cat_\infty\to \xi$ is given by  $K \mapsto (K, K \to \widetilde{K \times K}, K \to K^{BC_2})$ using the diagonal morphisms, and
the right adjoint sends $(\caC, \beta\colon \caV \to \widetilde{\caC \times \caC},\phi\colon H \to \caV^{hC_2})$ to $H.$
Moreover, the functor $\rho$ is fully faithful.
\end{lemma}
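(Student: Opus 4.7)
The plan is to exhibit the claimed adjunction by directly verifying the universal property, and then deduce fully faithfulness from the triangle identities.

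First I would identify the right adjoint candidate $R \colon \Xi \to \Cat_\infty$, $(\caC, \beta, \phi) \mapsto H$, as the composite $\Xi \to \Fun([1], \Cat_\infty) \xrightarrow{\ev_0} \Cat_\infty$ of the projection onto the third factor followed by evaluation at the source. Both functors preserve small limits and go between presentable $\infty$-categories (the presentability of $\Xi$ following from its description as a pullback of presentables along accessible limit-preserving functors), so $R$ does too, and the adjoint functor theorem supplies a left adjoint.

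Next, to identify this left adjoint with $\rho$, I would apply \cref{lemwa} to unfold, for $K \in \Cat_\infty$ and $X = (\caC, \beta \colon \caV \to \widetilde{\caC \times \caC}, \phi \colon H \to \caV^{hC_2}) \in \Xi$,
\begin{equation*}
\map_\Xi(\rho(K), X) \simeq \Fun(K, \caC) \times_{\Fun(\pi K, \widetilde{\caC \times \caC})^{hC_2}} \Fun(\pi K, \caV)^{hC_2} \times_{\Fun(K, \caV^{hC_2})} \Fun(K, H),
\end{equation*}
where $\pi K$ denotes $K$ equipped with the trivial $C_2$-action. The decisive tool is the adjunction $\pi \dashv (-)^{hC_2}$ between $\Cat_\infty$ and $\Cat_\infty[C_2]$: for any $\caW$ with $C_2$-action one has a natural equivalence $\Fun(\pi K, \caW)^{hC_2} \simeq \Fun(K, \caW^{hC_2})$, and the map $g \mapsto g^{hC_2} \circ \eta_K$ occurring as the middle leg of the pullback is precisely this adjunction equivalence, with $\eta_K \colon K \to K^{BC_2}$ the unit. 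Applying this to both $\caV$ and $\widetilde{\caC \times \caC}$, together with the fact that $\Delta_K$ and $\eta_K$ in $\rho(K)$ are the universal such diagonals, collapses the pullback onto $\Fun(K, H)$, establishing the universal property.

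Finally, fully faithfulness is immediate from the triangle identities once the adjunction is in place: the unit $\id_{\Cat_\infty} \to R \circ \rho$ is visibly the identity, since $R(\rho(K)) = K$ (the source of the third component $\eta_K \colon K \to K^{BC_2}$ of $\rho(K) = (K, \Delta_K, \eta_K)$) and the comparison is canonically $\id_K$.

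The main obstacle is the bookkeeping in the pullback collapse: one must verify carefully that after transporting $f \in \Fun(K, \caC)$ through the adjunction equivalence on $\widetilde{\caC \times \caC}$ (producing the composite $\widetilde{\Delta}_\caC \circ f \colon K \to \widetilde{\caC \times \caC}^{hC_2}$), the resulting compatibility with $\beta^{hC_2} \circ \phi \circ h$ induced by a given $h \colon K \to H$ is parametrized by a contractible space of choices, so that the pullback genuinely reduces to $\Fun(K, H)$ rather than to a fibered variant.
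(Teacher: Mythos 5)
Your argument is essentially the paper's proof: unfold $\mathrm{Mor}_\Xi(\rho(K), X)$ using the explicit pullback formula and observe that it collapses to $\Fun(K,H)$, which after passing to cores gives the adjunction on mapping spaces; full faithfulness then follows from $R\rho \simeq \id$. The worry you raise at the end does not materialize: the two legs $\Fun(K,\caC) \to \Fun(K,\widetilde{\caC\times\caC})^{hC_2}$ and $\Fun(K,\caV)^{hC_2}\to\Fun(K,\caV^{hC_2})$ are outright equivalences (in both cases because $\Fun(K,-)$ commutes with $(-)^{hC_2}$ when $K$ carries the trivial $C_2$-action, and in the first case additionally because $(\widetilde{\caC\times\caC})^{hC_2}\simeq\caC$), and an iterated fiber product along two equivalent legs contracts onto the remaining corner with no fibered residue. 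The opening detour through presentability and the adjoint functor theorem is unnecessary and can be dropped, since the mapping-space computation exhibits the adjunction directly without needing to know in advance that a left adjoint exists.
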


\begin{proof}
There is a canonical equivalence
$$\mathrm{Mor}_\Xi(\rho(K), (\caC, \beta,\phi))	\simeq $$$$ \Fun(K, \caC) \times_{\Fun(K,\widetilde{\caC \times \caC})^{hC_2}} \Fun(K, \caV)^{hC_2} \times_{\Fun(K, \caV^{hC_2})} \Fun(K,H) \simeq \Fun(K,H), $$
where the last equivalence holds since the first and third functors in the pullback are equivalences.
\end{proof}

Restricting the action of $\Xi$ on itself, that comes from the cartesian product, along $\rho\colon \Cat_\infty \to \Xi$ 
gives an action of $\Cat_\infty$ on $\Xi$,
which is closed because the action of $\Xi$ on itself is closed by Lemma \ref{lemwa} and $\rho$ admits a right adjoint by \cref{adj_Cat_Xi}.
We show the reader of how the external hom looks like in this particular case.

\begin{remark}\label{coten} Let $(\caC,\beta,\phi)$ be in $\Xi$ with $\beta\colon\caV\to\widetilde{\caC\times\caC}$ and $\phi\colon H\to \caV^{hC_2}$, and let $K$ be an $\infty$-category. Then the cotensor $(\caC, \beta,\phi)^K$ is the triple given by the following components:
\begin{itemize}
    \item[-]  $((\caC, \beta,\phi)^K)_1=\Fun(K,\caC)$
    \item[-] $((\caC, \beta,\phi)^K)_2\colon \Fun(K, \caV) \xrightarrow{\Fun(K, \beta)} \Fun(K,  \widetilde{\caC \times \caC}) \simeq \widetilde{ \Fun(K,\caC) \times  \Fun(K,\caC)}$
    \item[-] $((\caC, \beta,\phi)^K)_3\colon \Fun(K, H) \xrightarrow{\Fun(K,\phi)} \Fun(K, \caV^{hC_2}) \simeq \Fun(K, \caV)^{hC_2}$
\end{itemize}
Moreover, if $(\caC,\beta,\phi)$ is in $\mathrm{Qu}$, then  $(\caC, \beta,\phi)^K$ is in $\mathrm{Qu}$ as well.
\end{remark}

\begin{notation}\label{notation_coten}
Let $K$ be an $\infty$-category and $(\caC,\caH_\caC)$ 
an $\infty$-category with genuine pro-duality corresponding to
$(\caC,  \beta, \phi) \in \mathrm{Qu}$ via the equivalence of \cref{equiv_pg_Qu}.
We will denote by $(\caC,\caH_\caC)^K$
the $\infty$-category with genuine pro-duality corresponding to $(\caC,  \beta,\phi)^K.$
\end{notation}

\begin{lemma}\label{cotensor_butnotreally}
Let $(\caC,H_\caC),(\caD,H_\caD)$ be $\infty$-categories with genuine pro-duality
and $K$ an $\infty$-category.
There is a canonical equivalence
$$ \map_{\Cat_\infty}(K,H_{\Fun(\caC,\caD)}) \simeq \map_{\Cat_{\infty}^\mathrm{gp}}((\caC,H_\caC),(\caD,H_\caD)^K). $$
\end{lemma}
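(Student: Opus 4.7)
The plan is to chain together several of the identifications assembled in this section: the cartesian closed structure on $\Xi$ from Lemma \ref{lemwa}, the adjunction $\rho \dashv (-)_3$ of Lemma \ref{adj_Cat_Xi}, the identification of $H_{\Fun(\caC,\caD)}$ with the ``$H$-component'' of an internal hom in $\Xi$ given by the corollary following Lemma \ref{lemwa}, the cotensor formula of Remark \ref{coten}, and the equivalence $\Cat_\infty^\mathrm{gp} \simeq \mathrm{Qu}$ of Corollary \ref{equiv_pg_Qu}. All steps are adjunction shuffles; the only thing to verify is that they are compatible.

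Concretely, let $(\caC,\beta,\phi)$ and $(\caD,\gamma,\psi)$ be the objects of $\mathrm{Qu}$ corresponding to $(\caC,H_\caC)$ and $(\caD,H_\caD)$ under Corollary \ref{equiv_pg_Qu}. First I would apply the adjunction of Lemma \ref{adj_Cat_Xi} to $(\caD,\gamma,\psi)^{(\caC,\beta,\phi)}\in\Xi$, which by the corollary after Lemma \ref{lemwa} has $H$-component equal to $\mathrm{Mor}_\Xi((\caC,\beta,\phi),(\caD,\gamma,\psi))\simeq H_{\Fun(\caC,\caD)}$. This gives
\[
\map_{\Cat_\infty}(K,H_{\Fun(\caC,\caD)})\;\simeq\;\map_\Xi\bigl(\rho(K),\,(\caD,\gamma,\psi)^{(\caC,\beta,\phi)}\bigr).
\]

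Next I would use the cartesian closed structure of $\Xi$ (Lemma \ref{lemwa}) together with the symmetry of $\times$ to rewrite
\[
\map_\Xi\bigl(\rho(K),\,(\caD,\gamma,\psi)^{(\caC,\beta,\phi)}\bigr)\;\simeq\;\map_\Xi\bigl(\rho(K)\times(\caC,\beta,\phi),\,(\caD,\gamma,\psi)\bigr)\;\simeq\;\map_\Xi\bigl((\caC,\beta,\phi),\,(\caD,\gamma,\psi)^{\rho(K)}\bigr).
\]
By construction of the $\Cat_\infty$-action on $\Xi$ (via restriction along $\rho$), the object $(\caD,\gamma,\psi)^{\rho(K)}$ is precisely the cotensor $(\caD,\gamma,\psi)^K$ of Remark \ref{coten}, which also lies in $\mathrm{Qu}$. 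Thus via Notation \ref{notation_coten} it corresponds to $(\caD,H_\caD)^K\in\Cat_\infty^\mathrm{gp}$.

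To finish, I would invoke that $\mathrm{Qu}\subset\Xi$ is a full subcategory (by the definition of $\mathrm{Qu}$ as a fiber product singling out right fibrations with $C_2$-action), so mapping spaces in $\Xi$ between objects of $\mathrm{Qu}$ agree with mapping spaces in $\mathrm{Qu}$; composing with the equivalence of Corollary \ref{equiv_pg_Qu} yields the right-hand side of the lemma. The only subtlety—and the main piece of bookkeeping—is verifying that Remark \ref{coten}'s explicit triple $(\caC,\beta,\phi)^K$ really is the internal hom $(\caC,\beta,\phi)^{\rho(K)}$ in $\Xi$. This is a direct unwinding from the formula in Lemma \ref{lemwa} using that $\rho(K)$ is $K$ equipped with the diagonals, so that each pullback over the $\rho(K)$-component collapses.
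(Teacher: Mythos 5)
Your argument is correct and is essentially the paper's proof spelled out: the paper states the same chain $\map_{\Cat_\infty}(K,H_{\Fun(\caC,\caD)})\simeq\map_{\mathrm{Qu}}((\caC,\beta,\phi),(\caD,\gamma,\psi)^K)\simeq\map_{\Cat_\infty^{\mathrm{gp}}}((\caC,H_\caC),(\caD,H_\caD)^K)$ in one line, with the first equivalence being exactly the adjunction shuffle you unwind via Lemma \ref{adj_Cat_Xi}, Lemma \ref{lemwa}, and the identification $H_{\Fun(\caC,\caD)}\simeq\mathrm{Mor}_\Xi((\caC,\beta,\phi),(\caD,\gamma,\psi))$. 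Your additional bookkeeping about $\rho(K)$ and the fullness of $\mathrm{Qu}\subset\Xi$ is sound and only makes the implicit steps explicit.
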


\begin{proof}Consider $(\caC,\beta, \phi),(\caD,\gamma, \psi)$ in $\mathrm{Qu}$ the corresponding objects to $(\caC,H_\caC),(\caD,H_\caD)$ respectively, via \cref{equiv_pg_Qu}.
Then we have the chain of equivalences below:
$$ \map_{\Cat_\infty}(K,H_{\Fun(\caC,\caD)}) \simeq \map_{\mathrm{Qu}}((\caC,\beta, \phi),(\caD,\gamma, \psi)^K) \simeq \map_{\Cat_{\infty}^\mathrm{gp}}((\caC,H_\caC),(\caD,H_\caD)^K). $$
\end{proof}

The next result will come in hand later. 

\begin{proposition}\label{maaap}
Let $(\caC,H_\caC), (\caD,H_\caD)$ be $\infty$-categories with genuine pro-duality. There is a canonical map
$$ (\Fun(\caC,\caD),H_{\Fun(\caC,\caD)}) \to (\caD,H_\caD)^{H_\caC}$$
of $\infty$-categories with genuine pro-duality
that induces the following functors:
\begin{enumerate}
    \item the canonical functor $\Fun(\caC,\caD) \to \Fun(H_\caC,\caD)$ on underlying $\infty$-categories,
    \item the functor $\caH(\Fun(\caC,\caD)) \to \Fun(\caH(\caC),\caH(\caD))\to \Fun(H_\caC,\caH(\caD))$ on hermitian objects, and 
    \item the functor $H_{\Fun(\caC,\caD)} \to \Fun(H_\caC,H_\caD)$ on genuine refinements.
\end{enumerate}
\end{proposition}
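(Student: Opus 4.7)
The plan is to work in the model $\mathrm{Qu}$ of $\Cat_\infty^{\mathrm{gp}}$ provided by \cref{equiv_pg_Qu}, where the internal hom admits the explicit formula of \cref{lemwa} and cotensors are given by \cref{coten}. Write $(\caC, \beta\colon \caV \to \widetilde{\caC \times \caC}, \phi\colon H_\caC \to \caV^{hC_2})$ and $(\caD, \gamma\colon \caW \to \widetilde{\caD \times \caD}, \psi\colon H_\caD \to \caW^{hC_2})$ for the objects of $\mathrm{Qu} \subset \Xi$ corresponding to $(\caC, H_\caC)$ and $(\caD, H_\caD)$ under the equivalences of \cref{prop:equiv_pd_sym} and \cref{equiv_pg_Qu}.

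First, I would construct a canonical evaluation map $\epsilon\colon \rho(H_\caC) \to (\caC, \beta, \phi)$ in $\Xi$. By the adjunction of \cref{adj_Cat_Xi} the space of such maps is equivalent to $\map_{\Cat_\infty}(H_\caC, H_\caC)$, so I take the map corresponding to the identity. Unwinding the proof of that lemma shows that its underlying functor is the composition $H_\caC \to \caV^{hC_2} \to \caV \to \widetilde{\caC \times \caC} \to \caC$, which by \cref{uujnhn} agrees with the forgetful $H_\caC \to \caH(\caC) \to \caC$. The sought map is then obtained by applying the internal hom $\Fun(-, (\caD, \gamma, \psi))$ to $\epsilon$, producing
\[
\Fun((\caC,\beta,\phi), (\caD,\gamma,\psi)) \longrightarrow \Fun(\rho(H_\caC), (\caD,\gamma,\psi)),
\]
and the target is identified with the cotensor $(\caD, H_\caD)^{H_\caC}$ because the closed action of $\Cat_\infty$ on $\Xi$ is obtained by restricting the cartesian closed structure of $\Xi$ along $\rho$, as explained before \cref{coten}.

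Next, I would verify the three identifications (1)--(3) componentwise, using the four-term pullback formula for $\mathrm{Mor}_\Xi$ from the notation preceding \cref{rewqy}. On the underlying $\infty$-category the map becomes precomposition with $H_\caC \to \caC$, yielding (1). On the middle component of the $\Xi$-triple it is precomposition with $H_\caC \to \caV$; passing to $C_2$-fixed points and using \cref{uujnhn} identifies the resulting map with the composition of the functor $\caH(\Fun(\caC,\caD)) \to \Fun(\caH(\caC), \caH(\caD))$ from the earlier lemma stating the two canonical functors (immediately preceding \cref{prop:internal_hom_produality}) with restriction along $\phi\colon H_\caC \to \caH(\caC)$, giving (2). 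On the top component it is simply the projection $\mathrm{Mor}_\Xi \to \Fun(H_\caC, H_\caD)$ onto the fourth factor of the defining pullback, giving (3).

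The main obstacle I foresee is purely organizational: tracking the nested pullback defining $\mathrm{Mor}_\Xi$ and matching each of its legs with the corresponding component of $(\caD, H_\caD)^{H_\caC}$ via the cotensor formula of \cref{coten}. Beyond the cartesian closed structure of $\Xi$ established in \cref{lemwa} and the adjunction of \cref{adj_Cat_Xi}, no new conceptual input should be required; the construction is entirely formal once the $\Xi$-model is in place.
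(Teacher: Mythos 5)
Your construction is correct and agrees with the paper's, though the two are phrased differently. The paper's proof is a Yoneda-style argument: for an arbitrary $(\caB,H_\caB)$ and a map $\alpha\colon (\caB,H_\caB)\to(\Fun(\caC,\caD),H_{\Fun(\caC,\caD)})$, it transposes to $(\caC,H_\caC)\to(\Fun(\caB,\caD),H_{\Fun(\caB,\caD)})$, applies the genuine-refinement forgetful functor to get $H_\caC\to H_{\Fun(\caB,\caD)}$, then invokes \cref{cotensor_butnotreally} to land in $\map_{\Cat^\mathrm{gp}_\infty}((\caB,H_\caB),(\caD,H_\caD)^{H_\caC})$, and finally sets $\alpha=\id$. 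You instead build the counit $\epsilon\colon\rho(H_\caC)\to(\caC,\beta,\phi)$ of the adjunction from \cref{adj_Cat_Xi} and apply the internal hom $\Fun(-,\caD)$ to it, using that the cotensor over $\Cat_\infty$ agrees with $\Fun(\rho(-),-)$ by construction of the closed action. These two recipes produce the same map\textemdash this is a standard exercise with triangle identities and naturality of the counit in a closed category\textemdash but yours is shorter and more explicit, and in particular makes it transparent why items (1)--(3) hold: they read off componentwise from precomposition with $\epsilon$, which is exactly what the statement asserts. The paper, by contrast, does not verify (1)--(3) in its proof; it only constructs the map. So if anything your version is more complete. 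The one place to be careful is your claim that the underlying functor of $\epsilon$ unwinds as $H_\caC\to\caV^{hC_2}\to\caV\to\widetilde{\caC\times\caC}\to\caC$: the last arrow needs justification (it is not a projection in the naive sense, since $\widetilde{\caC\times\caC}$ is an enriched cotensor), but the end result\textemdash that the underlying functor is the forgetful $H_\caC\to\caH(\caC)\to\caC$\textemdash is cleaner to extract directly from \cref{uujnhn} together with \cref{uujn}, which is what you then use anyway.
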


\begin{proof} To construct the map in the statement, let
 $(\caB,H_\caB), (\caC,H_\caC), (\caD,H_\caD),$ be $\infty$-categories with genuine pro-duality and let us consider a map
$$\alpha\colon (\caB,H_\caB) \to (\Fun(\caC,\caD),H_{\Fun(\caC,\caD)})$$ of $\infty$-categories with genuine pro-duality. 
We have an associated map $$(\caC,H_\caC) \to (\Fun(\caB,\caD),H_{\Fun(\caB,\caD)})$$ of $\infty$-categories with genuine pro-duality that in turns gives rise to a functor $H_\caC \to H_{\Fun(\caB,\caD)}$
corresponding to a map $ (\caB,H_\caB) \to (\caD,H_\caD)^{H_\caC}$
of $\infty$-categories with genuine pro-duality.
For $\alpha$ the identity we obtain the desired map
$$ (\Fun(\caC,\caD),H_{\Fun(\caC,\caD)}) \to (\caD,H_\caD)^{H_\caC}$$
of $\infty$-categories with genuine pro-duality.	
\end{proof}

\section{Hermitian \texorpdfstring{$Q$}\ -construction}\label{hermq}

The aim of this section is to define a hermitian $Q$-construction that both extends that of \cite{Calmes_etal2} and facilitates the comparison with the existing real $S_\bullet$-construction of \cite{realKthHSV}, and in doing that serves as a pivot for the comparison of real $K$-theory spaces of \cref{sec:comparison_Kth}.

\begin{definition}\label{def:Qconstruction}

Let $\caD$ be an $\infty$-category, $\caC, \caF \subset \caD$ be wide subcategories
and $n \geq 0$. 
The $\caQ$-construction is the full subfunctor $\caQ(\caD,\caC,\caF)$
of the functor $$\Delta^\op \to \Cat_{\infty} \hspace{0.8em}\text{ that maps }\hspace{0.8em} [n] \mapsto \Fun(\Tw([n]), \caD)$$
such that for every $n \geq0$ the full subcategory 
$$\caQ(\caD,\caC,\caF)_{[n]} \subset \Fun(\Tw([n]), \caD)$$
consists of the functors $X\colon \Tw([n]) \to \caD$ such that for any 
$ 0 \leq i \leq j \leq \ell \leq k \leq n$
the square
\begin{equation*}\label{pushhh}
\xymatrix{X_{i,k}\ar[r] \ar[d] & X_{j,k} \ar[d] \\
X_{i,\ell} \ar[r] & X_{j,\ell}}
\end{equation*}
is a pullback square,
for any $ 0 \leq i \leq \ell \leq k \leq n$
the map $X_{i,k} \to X_{i,\ell}$ is a fibration and 
for any $ 0 \leq i \leq j \leq \ell \leq n$
the map $X_{i,\ell} \to X_{j,\ell}$ is a cofibration.
\end{definition}

\begin{remark}
The embedding $\caQ(\caD,\caC,\caF)_{[0]} \subset \Fun(\Tw([0]),\caD) \simeq \caD$ is an equivalence.	
\end{remark}

The next result gives a sufficient condition for the $Q$-construction to be Segal. 

\begin{proposition}\label{lemas}
Let $(\caD, \caC,\caF)$ be an exact $\infty$-category.
The simplicial object given by the $Q$-construction
$\caQ(\caD, \caC,\caF)\colon \Delta^\op \to \Cat_\infty$ is Segal.
\end{proposition}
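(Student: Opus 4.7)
The plan is to produce an explicit inverse to the Segal map
\[
\caQ(\caD,\caC,\caF)_n \longrightarrow \caQ(\caD,\caC,\caF)_1 \times_{\caQ(\caD,\caC,\caF)_0} \cdots \times_{\caQ(\caD,\caC,\caF)_0} \caQ(\caD,\caC,\caF)_1
\]
via iterated pullbacks along the twisted arrow diagram. First, I would identify the Segal pullback with a full subcategory of $\Fun(\Tw^{\mathrm{sp}}([n]), \caD)$, where $\Tw^{\mathrm{sp}}([n]) \hookrightarrow \Tw([n])$ is the spine subcategory generated by the objects $(i,i)$ and $(i, i{+}1)$ together with the evident morphisms; the characterizing condition is that each restriction to $\Tw([\{i, i{+}1\}])$ lies in $\caQ(\caD)_1$, so the edges decreasing the second coordinate are fibrations and those increasing the first coordinate are cofibrations. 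The Segal map is then restriction along this inclusion.

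Given compatible spine data, I would extend to $\Tw([n])$ by the recursion
\[
X_{i,k} \;\coloneqq\; X_{i{+}1,k} \times_{X_{i{+}1, i{+}1}} X_{i, i{+}1} \qquad (k - i \geq 2),
\]
inductively on $k - i$, using the composite fibration $X_{i{+}1, k} \to X_{i{+}1, i{+}1}$ supplied by the previous stages and the cofibration $X_{i, i{+}1} \to X_{i{+}1, i{+}1}$ from the spine. The universal property of pullbacks supplies all the required structural maps, assembling into a coherent functor $X\colon \Tw([n]) \to \caD$.

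The crux is to verify that $X$ lies in $\caQ(\caD)_n$. The general pullback condition for $0 \leq i \leq j \leq \ell \leq k \leq n$ follows from the defining pullbacks by iterated application of the pasting lemma. The fibration condition on $X_{i, k} \to X_{i, \ell}$ is automatic, since fibrations are stable under pullback and the spine fibrations serve as base cases. The subtle point is the cofibration condition on $X_{i, \ell} \to X_{j, \ell}$: here I would invoke condition 3 of \cref{def:exact_infty_cats}, which forces the ambigressive pullback squares produced by the recursion (cofibration on the bottom, fibration on the right) to be simultaneously ambigressive pushouts, so the top edge is also a cofibration. To promote this to an equivalence of $\infty$-categories, I would observe that the restriction functor is fully faithful because the pullback axioms imply any natural transformation between such diagrams is determined by its values on the spine, and essentially surjective by the extension construction. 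The main obstacle is precisely this cofibration verification, where the exact $\infty$-category axiom enters crucially.
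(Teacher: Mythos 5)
Your proof is correct but follows a genuinely different route from the paper's. The paper embeds $\caD$ into its presheaf category $\caP(\caD)$, cites results of Haug (\cite[Proposition 3.2]{Haug_Segal}, \cite[Proposition 5.14]{Haug_spans}) for the Segal condition of the unrestricted $Q$-construction $\caQ(\caP(\caD),\caP(\caD),\caP(\caD))$, and then uses Yoneda together with the exact-category hypotheses to show that the Segal map for the subcategory $\caQ(\caD,\caC,\caF)$ is fully faithful and essentially surjective. You instead construct an explicit inverse to the Segal map by iterated pullbacks along the spine of $\Tw([n])$, and you invoke axiom (3) of \cref{def:exact_infty_cats}\textemdash that ambigressive pullbacks are ambigressive pushouts\textemdash precisely where the paper invokes it, namely in checking that the extended diagram satisfies the (co)fibration conditions. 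What your approach buys is a self-contained argument avoiding the external citation; what it costs is more bookkeeping: the verification that the new edges are (co)fibrations needs a careful double induction on $k-i$ (one must already know that $X_{i+1,k}\to X_{i+1,i+1}$ is a fibration before applying axiom (3) at stage $k-i$), and the full-faithfulness claim is best made precise by observing that objects of $\caQ(\caD,\caC,\caF)_{[n]}$ are right Kan extensions of their restrictions to the spine, so that restriction along the spine inclusion is fully faithful on this subcategory. Both arguments rest on the same essential input from the exact $\infty$-category axioms; they differ only in whether the Segal property of the ambient pullback-span construction is quoted or reconstructed by hand.
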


\begin{proof}

Let us consider the category $\caP(\caD)$ of presheaves on $\caD$, and denote $\caQ(\caP(\caD))$ to the Q-construction $\caQ(\caP(\caD)), \caP(\caD), \caP(\caD))$. 
For $n \geq 2$, we know that the Segal map
$$\caQ(\caP(\caD))_{[n]} \to \caQ(\caP(\caD))_{[1]} \times_{\caP(\caD)} \dots \times_{\caP(\caD)} \caQ(\caP(\caD))_{[1]}$$ is an equivalence (see \cite[Proposition 3.2]{Haug_Segal} or \cite[Proposition 5.14]{Haug_spans}). By the Yoneda-lemma this functor restricts to a fully faithful functor
$$\caQ(\caD, \caC,\caF)_{[n]} \to \caQ(\caD,\caC,\caF)_{[1]} \times_{\caD} \dots \times_{\caD} \caQ(\caD,\caC,\caF)_{[1]}.$$
Moreover, such restriction is essentially surjective\textemdash and therefore and equivalence\textemdash since the hypothesis on $(\caD,\caF,\caC)$ guarantees that every object of $\caQ(\caP(\caD))_{[n]}$ belongs to $\caQ(\caD, \caC,\caF)_{[n]}$ if its image under the Segal map belongs to 
$$ \caQ(\caD,\caC,\caF)_{[1]} \times_{\caD} \dots \times_{\caD} \caQ(\caD,\caC,\caF)_{[1]}$$
using the hypothesis.
\end{proof}

In the next definition we give candidates for cofibrations and fibrations for every level of $\caQ(\caD)$. 

\begin{definition}\label{definition_exact} Let $\caD$ be an exact $\infty$-category. 
\begin{itemize}
\item For $n=0$, we use the equivalence $\caQ(\caD)_{[0]} \simeq \caD$ and choose the (co)fibrations to be those of $\caD$.
\item For $n=1$, we say that a map $(X \to Y, X \to Z) \to (X' \to Y', X' \to Z')$ in $\caQ(\caD)_{[1]}$ is
\begin{itemize}
    \item[-] a cofibration if the maps $ 
X \to X', Z \coprod_X {X'} \to Z', Y \coprod_X {X'} \to Y'$ are cofibrations, and
    \item[-] a fibration if the maps $ 
Y \to Y', Z \to Z', X \to Z \prod_{Z'} {X'}, X \to Y \prod_{Y'} {X'}$ are fibrations.
\end{itemize}
\item For $n \geq 2$, we say that a map in $\caQ(\caD)_{[n]} \simeq\caQ(\caD)_{[1]} \times_{\caD} \dots \times_{\caD} \caQ(\caD)_{[1]}$ is a (co)fibration if the 
images under the projections are (co)fibrations.
\end{itemize}
\end{definition}

\begin{remark}\label{Q1_Amb_underlying_equiv}
Let $\caD $ be an exact $\infty$-category. The canonical functor $\Fun([1]\times[1],\caD) \to \Fun(\Lambda^2_0,\caD)$ restricting along the canonical embedding $\Lambda^2_0 \subset [1]\times[1]$ 
restricts to an equivalence
$\mathrm{Amb}(\caD) \to \caQ(\caD)_{[1]}$
of exact $\infty$-categories.
\end{remark}

\begin{proposition}\label{corx}
Let $\caD$ be an exact $\infty$-category and $n \geq 0$.
The $\infty$-category $\caQ(\caD)_{[n]}$ is exact with the choice of cofibrations and fibrations from above.
	
\end{proposition}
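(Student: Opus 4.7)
The plan is to induct on $n$, reducing the general case to $n=1$ via the Segal decomposition from \cref{lemas}.

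For $n=0$, the claim is immediate from $\caQ(\caD)_{[0]} \simeq \caD$ and the hypothesis that $\caD$ is exact. For $n=1$, I would work through the equivalence $\mathrm{Amb}(\caD) \simeq \caQ(\caD)_{[1]}$ of \cref{Q1_Amb_underlying_equiv}, under which objects may be viewed as spans $Y \leftarrow X \to Z$ with $X \to Y$ a fibration and $X \to Z$ a cofibration. Additivity of the underlying $\infty$-category follows from the fact that $\mathrm{Amb}(\caD)$ is closed under finite biproducts in $\Fun([1]\times[1],\caD)$. The cofibrations and fibrations of \cref{definition_exact} are patently stable under equivalences and composition, the latter using composition-closure of cofibrations in $\caD$ together with the pushout-pasting lemma. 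The Waldhausen axioms for cofibrations\textemdash every object cofibrant and existence of pushouts along cofibrations that remain cofibrations\textemdash reduce to successive applications of pushout pasting in $\caD$ at each of the three vertices of the span, together with the observation that pushing a fibration out along a cofibration remains a fibration in the ambigressive square produced (which is exactly the exactness of $\caD$). The co-Waldhausen axioms for fibrations are formally dual. Finally, the coincidence of ambigressive pushouts and pullbacks in $\caQ(\caD)_{[1]}$ follows because in each case the conditions translate vertexwise into corresponding ambigressive conditions in $\caD$, which agree by assumption.

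For $n \geq 2$, the Segal map
$$\caQ(\caD)_{[n]} \simeq \caQ(\caD)_{[1]} \times_\caD \cdots \times_\caD \caQ(\caD)_{[1]}$$
is an equivalence by \cref{lemas}. Since the cofibrations and fibrations in $\caQ(\caD)_{[n]}$ are defined componentwise through the projections, it is enough to check that a pullback of exact $\infty$-categories along exact functors, equipped with the componentwise exact structure, is again exact. The relevant projections to $\caQ(\caD)_{[1]}$ and to $\caD$ (evaluation at vertices of $\Tw([1])$) are exact by construction, and componentwise (co)fibrations, pushouts and pullbacks in the fibre product assemble from those in the factors, so this is a formal verification.

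The main obstacle is the $n=1$ case: the careful check that the classes in \cref{definition_exact} are closed under pushouts and pullbacks of spans, and that the pushed-out span still belongs to $\caQ(\caD)_{[1]}$ (i.e.\ the new apex-to-left-foot map remains a fibration and apex-to-right-foot remains a cofibration). This is where all the combinatorial bookkeeping sits, but the arguments are repeated applications of pushout/pullback pasting in $\caD$ together with the stability of cofibrations and fibrations established by the exact structure on $\caD$.
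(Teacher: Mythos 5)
Your proposal is correct and lines up with the paper for $n=0$ and $n\geq 2$, but for the crux case $n=1$ you take a genuinely different route. You propose a direct verification of the exact $\infty$-category axioms for $\caQ(\caD)_{[1]}$ (additivity, stability of the classes in \cref{definition_exact} under composition and pushout/pullback, the Waldhausen and co-Waldhausen axioms, and the coincidence of ambigressive pushouts and pullbacks), reducing each to pushout/pullback pasting together with exactness of $\caD$. The paper instead transports the exact structure along the chain of equivalences $S(\caD)_3 \simeq \mathrm{Amb}(\caD) \simeq \caQ(\caD)_{[1]}$ furnished by \cref{corqa} and \cref{Q1_Amb_underlying_equiv}, citing \cref{exact_structure_on_S} (from \cite[Proposition 8.7]{realKthHSV}) for the fact that $S(\caD)_3$ is already exact, and noting that (co)fibrations correspond under the composite equivalence. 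The paper's argument is short precisely because the hard combinatorial verification you anticipate has already been done once for $S(\caD)_n$ in the cited reference; your route is more self-contained and avoids invoking \cref{corqa}, at the cost of re-doing that bookkeeping. Both are sound, but be aware that \cref{Q1_Amb_underlying_equiv} (which you lean on) is itself partly a reformulation rather than a free input, so if you proceed as you propose you should verify exactness on $\mathrm{Amb}(\caD)$ directly from the data and then observe the identification of (co)fibration classes with those of \cref{definition_exact}, rather than citing the remark as giving an equivalence of exact $\infty$-categories outright.
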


\begin{proof} By definition we know it for $n=0$. For $n=1$, note that  \cref{corqa} and \cref{Q1_Amb_underlying_equiv} gives us, respectively, the following equivalences
$$ S(\caD)_3 \simeq \mathrm{Amb}(\caD)\hspace{1em}\text{and}\hspace{1em}\mathrm{Amb}(\caD) \simeq \caQ(\caD)_{[1]}.$$
Since we know that $S(\caD)_3$ is an exact $\infty$-category by \cref{exact_structure_on_S}, and that cofibrations and fibrations correspond
under the equivalence $S(\caD)_3 \simeq \caQ(\caD)_{[1]}$, it follows that $\caQ(\caD)_{[1]}$ is an exact category. Finally, for $n\geq 2$ the statement follows from the definition of (co)fibrations and the fact that pushout and pullbacks are computed component-wise in pullbacks of $\infty$-categories.
\end{proof}

\begin{remark}\label{rmk:Q_exact}
By \cref{corx}, given an exact $\infty$-category $\caD$, the Segal object $\caQ(\caD) \colon \Delta^\op \to \Cat_\infty$ canonically lifts to a Segal object $\Delta^\op \to \Exact_\infty$.
\end{remark}

\subsection{Hermitian $Q$-construction}
We now enhance the $Q$-construction from a Segal object in $\Cat_\infty$ to a Segal object in $\Wald_\infty^\gd.$ 

\begin{definition}
A Waldhausen $\infty$-category with genuine pro-duality is a triple $$((\caD,\phi), \caC,\caF),$$
where $(\caD,\phi)$ is an $\infty$-category with genuine pro-duality
and $(\caD, \caC,\caF)$ is an exact $\infty$-category.
\end{definition}

\begin{definition} We define $\infty$-category of small Waldhausen $\infty$-categories with genuine pro-duality as the pullback
$$\Wald_\infty^{\mathrm{gp}} \coloneqq \Cat_\infty^{\mathrm{gp}} \times_{\Cat_\infty} \Exact_\infty $$ of the forgetful functor
$\Cat_\infty^{\mathrm{gp}} \to \Cat_\infty$ that maps $(\caC, \phi) \mapsto \caC$ along the functor $ \Exact_\infty \to \Cat_\infty$
that forgets the exact structure.
\end{definition}

\begin{definition}Let $(\caD, \psi) $ be an $\infty$-category with genuine pro-duality and $\caC, \caF \subset \caD$ wide subcategories.
The hermitian $\caQ$-construction is the full subfunctor $\caQ(\caD,\caC,\caF)$
of the functor $$\Delta^\op \to \Cat^{\mathrm{gp}}_{\infty}, [n] \mapsto \caD^{\Tw([n])}$$
such that for every $n \geq0$ the full subcategory with genuine pro-duality
$$\caQ(\caD,\caC,\caF)_{[n]} \subset \caD^{\Tw([n])}$$
is spanned by $\caQ(\caD,\caC,\caF)_{[n]}$,
where $\caD^{\Tw([n])}$ is the cotensor of \cref{notation_coten}. 
\end{definition}

\begin{remark}If $\caD$ is a Waldhausen $\infty$-category with genuine pro-duality, 
\cref{rmk:Q_exact} implies that the functor $$\caQ(\caD)\colon\Delta^\op \to \Cat^{\mathrm{gp}}_{\infty}$$
lifts to a functor $\Delta^\op \to \Wald^{\mathrm{gp}}_{\infty}.$
\end{remark}

\begin{remark}\label{constr:lambda}
There is a canonical equivalence 
$$\caH(\widetilde{[1] \times [1]}) \simeq \Lambda_0^2 \simeq \Tw([1])$$
of $\infty$-categories.
Let $\caD $ be a Waldhausen $\infty$-category with genuine pro-duality.
By \cref{maaap} there is a map $\caD^{\widetilde{[1] \times [1]}} \to \caD^{\Tw([1])}$
of Waldhausen $\infty$-categories with genuine pro-duality
whose underlying functor is restriction along the canonical embedding $ \Lambda_0^2 \subset [1] \times[1].$ This map restricts to a map
$$\lambda\colon \mathrm{Amb}(\caD) \to \caQ(\caD)_{[1]}$$ of Waldhausen $\infty$-categories with genuine pro-duality.
\end{remark}

\begin{proposition}\label{proqa}
Let $\caD $ be a Waldhausen $\infty$-category with genuine pro-duality.
The map of Waldhausen $\infty$-categories with genuine pro-duality $$\lambda\colon \mathrm{Amb}(\caD) \to \caQ(\caD)_{[1]}$$ 
of \cref{constr:lambda} is an equivalence.
\end{proposition}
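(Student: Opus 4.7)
My plan is to reduce the claim to verifying three component-wise equivalences via the identification $\Cat_\infty^{\mathrm{gp}} \simeq \mathrm{Qu}$ of Corollary~\ref{equiv_pg_Qu}: a morphism is an equivalence iff it induces equivalences on the underlying $\infty$-category, on the symmetric functor $\caV$, and on the right fibration $H$ (the genuine refinement). The first of these is already essentially available: on underlying exact $\infty$-categories, $\lambda$ is the restriction functor of Remark~\ref{Q1_Amb_underlying_equiv}, which is an equivalence, and matching of cofibrations and fibrations follows from the chain of identifications $S(\caD)_3 \simeq \mathrm{Amb}(\caD) \simeq \caQ(\caD)_{[1]}$ used in the proof of Proposition~\ref{corx}.

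For the $\caV$- and $H$-components, I would exploit that both $\mathrm{Amb}(\caD)$ and $\caQ(\caD)_{[1]}$ sit as full subcategories of the ambient cotensors $\caD^{\widetilde{[1]\times[1]}}$ and $\caD^{\Tw([1])}$, so (since right fibrations pull back along inclusions of full subcategories) the symmetric functor and right fibration on each side are the pullbacks of the ambient ones. The map $\lambda$ is induced by the canonical ambient map
\[\caD^{\widetilde{[1]\times[1]}} \longrightarrow \caD^{\caH(\widetilde{[1]\times[1]})} \simeq \caD^{\Tw([1])}\]
of Proposition~\ref{maaap}, using the equivalence $\caH(\widetilde{[1]\times[1]}) \simeq \Tw([1])$. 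Combining the already-established equivalence on underlying $\infty$-categories with the explicit descriptions of the cotensor (Remark~\ref{coten}) and of the ambient map on each component (items (2)--(3) of Proposition~\ref{maaap}) should deliver the desired equivalences on $\caV$ and $H$ after restriction.

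The hard part will be bookkeeping: $\caD^{\widetilde{[1]\times[1]}}$ is a cotensor in $\Cat_\infty^\gd$ by an $\infty$-category with duality, while $\caD^{\Tw([1])}$ is a cotensor in $\Cat_\infty^{\mathrm{gp}}$ by a plain $\infty$-category, and Proposition~\ref{maaap} is precisely the bridge between these two cotensor constructions. One must carefully unpack the two items of that proposition to confirm that the $\caV$- and $H$-components of the ambient map, after restriction to the full subcategories $\mathrm{Amb}(\caD)$ and $\caQ(\caD)_{[1]}$, yield honest equivalences rather than strictly weaker comparisons---a verification that I expect to boil down to applying $\Fun(-, \caV_\caD)$ and $\Fun(-, H_\caD)$ to the equivalence $\caH(\widetilde{[1]\times[1]}) \simeq \Tw([1])$ and then pulling back along the underlying equivalence of Remark~\ref{Q1_Amb_underlying_equiv}.
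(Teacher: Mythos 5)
Your overall strategy is the same as the paper's: decompose the claim into a check on the three components (underlying $\infty$-category, symmetric functor $\caV$, genuine refinement $H$) of the identification $\Cat_\infty^{\mathrm{gp}} \simeq \mathrm{Qu}$, handle the first with \cref{Q1_Amb_underlying_equiv}, and use the cotensor descriptions for the other two. The $H$-component also works out the way you expect: the fibers of the genuine refinements are $\Fun_{\caH(\caD)}(\caH(\widetilde{[1]\times[1]}),H)$ and $\Fun_{\caH(\caD)}(\Tw([1]),H)$, and the map between them is induced by $\caH(\widetilde{[1]\times[1]})\simeq\Lambda^2_0\simeq\Tw([1])$.

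The gap is in the $\caV$-component, and it is real. Two points. First, items (2)--(3) of \cref{maaap} only describe the map on hermitian objects $\caH=\caV^{hC_2}$ and on genuine refinements $H$; neither gives you the map on $\caV$ itself, so you cannot extract the $\caV$-equivalence from that proposition. Second, and more substantively, your heuristic of ``applying $\Fun(-,\caV_\caD)$ to $\caH(\widetilde{[1]\times[1]})\simeq\Tw([1])$'' presupposes that $\caV_{\caD^{\widetilde{[1]\times[1]}}}$ is $\Fun(\caH(\widetilde{[1]\times[1]}),\caV_\caD)$, but it is not. The cotensor $\caD^{\widetilde{[1]\times[1]}}$ is taken in $\Cat_\infty^\gd$ by an $\infty$-category \emph{with duality}, so its $\caV$-fiber over a pair $(\sigma,\sigma')$ is a genuine mapping space $\mathrm{Map}_{\Fun([1]\times[1],\caD)}(\sigma,\sigma'^\dual)$, a limit over $\Tw([1]\times[1])$ rather than over $\Lambda_0^2$. \cref{coten} only applies to cotensors by plain $\infty$-categories, i.e.\ to the $\caD^{\Tw([1])}$ side, not to the $\caD^{\widetilde{[1]\times[1]}}$ side. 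These two fibers are genuinely different on the ambient cotensors, and only become equivalent after restriction to ambigressive squares; the reduction crucially uses that $D'$ is the pushout (equivalently, $D'^\dual$ the pullback) in $\sigma'$ so that $\caD(A,D'^\dual)\simeq\caD(A,B'^\dual)\times_{\caD(A,A'^\dual)}\caD(A,C'^\dual)$. The paper performs exactly this explicit pullback computation; your sketch does not contain the ambigressiveness step, which is the substantive content of the $\caV$-verification.
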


\begin{proof}
We will verify that $\lambda$ induces an equivalence on underlying $\infty$-categories, on pro-dualities and on the genuine refinements.

That the functor $\lambda$ is an equivalence of $\infty$-categories is easy to observe. Indeed, recall that $\caQ(\caD)_{[1]} \subset \Fun(\Tw([1]),\caD) = \Fun({\Lambda^2_0},\caD)$ is the full subcategory
of spans $ X \to Y,X \to Z$, where $X\to Y$ is a fibration and $X \to Z$
is a cofibration.
The canonical functor $$\Fun([1]\times[1],\caD) \to \Fun(\Lambda^2_0,\caD)$$ restricts to an equivalence
$$\mathrm{Amb}(\caD) \to \caQ(\caD)_{[1]}$$ on underlying exact $\infty$-categories as noted in \cref{Q1_Amb_underlying_equiv}.

We now move to proving the equivalecne at the level of pro-dualities. Let $\sigma, \sigma'$ as below be ambigressive squares in $\caD$.
\begin{equation*}
\xymatrix{A \ar[r] \ar[d] & C \ar[d] \\
B \ar[r] & D,}\xymatrix{A' \ar[r] \ar[d] & C' \ar[d] \\
B' \ar[r] & D'}
\end{equation*}

We want to show that that the canonical map 

$$ \mathrm{Amb}(\caD)(\sigma, \sigma'^\dual) \longrightarrow \alpha_{\caQ(\caD)_{[1]}}(\lambda(\sigma),\lambda(\sigma'))$$
is an equivalence, where $\alpha_{\caQ(\caD)_{[1]}}$
is the pro-duality of $\caQ(\caD)_{[1]}$.
This map identifies with the canonical equivalence
\begin{align*}
\mathrm{Amb}(\caD)(\sigma, \sigma'^\dual)    &\simeq \caD(B,B'^\dual)\times_{\caD(A,B'^\dual) } \caD(A,D'^\dual) \times_{\caD(A,C'^\dual) } \caD(C,C'^\dual) \\
        &\simeq\caD(B,B'^\dual) \times_{\caD(A,B'^\dual) } (\caD(A,B'^\dual) \times_{\caD(A,A'^\dual)}\caD(A,C'^\dual) ) \times_{\caD(A,C'^\dual) } \caD(C,C'^\dual)\\
        &\simeq\caD(B,B'^\dual) \times_{\caD(A,A'^\dual) } \caD(C,C'^\dual)\\
        &\simeq\alpha_{\caQ(\caD)_{[1]}}(\lambda(\sigma),\lambda(\sigma')).
\end{align*}
For the last equivalence we use that by definition the pro-duality of $\caQ(\caD)_{[1]}$ is the pro-duality of the cotensor
$\caD^{\Lambda_0^2},$ which applies object-wise the pro-duality of $\caD$ underlying the duality.

It remains to see that $\lambda$ induces an equivalence on genuine refinements. For this we consider two genuine refinements  $H' \to \caH(\mathrm{Amb}(\caD))$ and $H'' \to \caH(\caQ(\caD)_{[1]})$,  and  $A \in \caH(\mathrm{Amb}(\caD))$. Let us consider $B\in \caH(\caQ(\caD)_{[1]})$ the image of $A$ under $\caH(\lambda)$.

Now, we can see $B$ as a functor $B\colon \Tw([1])\to \caH(\caD)$ via the inclusions
$$\caH(\caQ(\caD)_{[1]})\subset \caH(\caD^{\Tw([1])}) \simeq \Fun(\Tw([1]),\caH(\caD)).$$

There are canonical equivalences
$$ H'_A \simeq \Fun_{\caH(\caD)}(\caH(\widetilde{[1] \times [1]}) ,H), \hspace{1em}\text{ and }\hspace{1em}
H''_{B} \simeq  \Fun_{\caH(\caD)}(\Tw([1]),H)$$
and the induced map $H'_A \to H''_{B}$ identifies with the map
$$ \Fun_{\caH(\caD)}(\caH(\widetilde{[1] \times [1]}) ,H) \to \Fun_{\caH(\caD)}(\Tw([1]),H)$$ induced by the canonical
equivalence $\Tw([1])\simeq \Lambda^2_0 \simeq \caH(\widetilde{[1] \times [1]}),$ and therefore it is an equivalence itself.	
\end{proof}

\begin{proposition}\label{proposition_Segal}
Let $\caD $ be a Waldhausen $\infty$-category with genuine pro-duality. The simplicial object $\caQ(\caD)\colon \Delta^\op \to \Wald_\infty^\mathrm{gp}$ is Segal.
\end{proposition}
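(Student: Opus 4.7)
The plan is to verify the Segal condition componentwise, following the template of \cref{proqa}. Since $\Wald_\infty^\mathrm{gp}$ is defined as the pullback $\Cat_\infty^\mathrm{gp}\times_{\Cat_\infty}\Exact_\infty$, equivalences in it are jointly detected by the two projections, so it suffices to show the composites $\Delta^\op\to\Cat_\infty^\mathrm{gp}$ and $\Delta^\op\to\Exact_\infty$ are Segal. For the projection to $\Exact_\infty$, combine \cref{lemas} (Segal-ness on underlying $\infty$-categories) with \cref{definition_exact} and \cref{corx}: the (co)fibrations on $\caQ(\caD)_{[n]}$ for $n\ge 2$ are defined by pullback through the Segal decomposition, so the Segal map is by construction an equivalence of exact $\infty$-categories, which is the content of \cref{rmk:Q_exact}.

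For the $\Cat_\infty^\mathrm{gp}$-component, I would invoke the model $\mathrm{Qu}$ of \cref{equiv_pg_Qu} and unpack the Segal map as three separate maps: on underlying $\infty$-categories, on symmetric functors $\caV\to\widetilde{\caC\times\caC}$, and on genuine refinements $H\to\caV^{hC_2}$. Equivalence on underlying $\infty$-categories is \cref{lemas}. The remaining two equivalences are verified using the cotensor description of \cref{coten}, which shows that the symmetric functor and genuine refinement of $\caQ(\caD)_{[n]}\subset\caD^{\Tw([n])}$ are pulled back along the full inclusion from the pointwise structures $\Fun(\Tw([n]),\caV)$ and $\Fun(\Tw([n]),H)$. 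A direct fiber computation, modelled on the last two paragraphs of the proof of \cref{proqa}, identifies the pro-duality mapping space at two objects $X,Y$ with a limit of spaces of the form $\caD(X_{ij},Y_{k\ell}^\vee)$ indexed by $\Tw([n])$, and the genuine-refinement fiber at a hermitian object with $\Fun_{\caH(\caD)}(\Tw([n]),H)$; under the pullback-square conditions of the $\caQ$-construction these limits decompose compatibly with the Segal decomposition of $[n]$.

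The main obstacle is that $\Tw([n])$ is \emph{not} the naive pushout $\Tw([1])\coprod_{\Tw([0])}\cdots\coprod_{\Tw([0])}\Tw([1])$ in $\Cat_\infty$, so Segal-ness is not merely a formal consequence of cotensoring. As in \cref{lemas}, the $\caQ$-condition forces each object of $\caQ(\caD)_{[n]}$ to be the right Kan extension of its restriction to the consecutive $\Tw([1])$'s along the pullback squares in \cref{def:Qconstruction}. The componentwise Segal equivalences then amount to checking that this Kan extension is compatible with the right-fibration structure of the genuine refinement and with the $C_2$-equivariance of the symmetric functor, which is formal since right fibrations and homotopy $C_2$-fixed points are preserved by the relevant limits.
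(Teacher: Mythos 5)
Your overall plan — verify the Segal condition componentwise via the projections to $\Exact_\infty$ and $\Cat_\infty^{\mathrm{gp}}$, and on the latter further split into underlying category, pro-duality, and genuine refinement — matches the shape of the paper's argument, and your treatment of the $\Exact_\infty$-component agrees with the paper. Where you diverge is in the pro-duality and genuine-refinement steps: you propose a fiber-by-fiber computation modelled on the $n=1$ analysis in \cref{proqa}, and you explicitly flag that this requires a compatibility-of-Kan-extensions argument because $\Tw([n])$ is not the naive pushout of $\Tw([1])$'s. The paper avoids this detour entirely by a bootstrapping observation you did not spot: by the definition of the cotensor $\caD^{\Tw([n])}$ in $\Cat_\infty^{\mathrm{gp}}$ (\cref{coten}), the pro-duality of $\caQ(\caD)_{[n]}$ is itself $\caQ(\caW_\caD,\caC,\caF)_{[n]}$ where $\caW_\caD$ is the pro-duality of $\caD$, and the genuine refinement is $\caQ(H,\caC,\caF)_{[n]}$. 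Hence the Segal maps on pro-dualities and on genuine refinements are literally instances of the Segal map for $\caQ$ applied to the ordinary $\infty$-categories $\caW_\caD$ and $H$, and \cref{lemas} applies to those directly — no fiber computation, no Kan-extension compatibility, no claim about right fibrations or $C_2$-fixed points being preserved by the relevant limits. Your sentence asserting that the compatibility check ``is formal'' is precisely where your version has a real gap: right fibrations are not in general preserved by the limits involved, and you would need to spell out why the specific Kan extension in play respects that structure; the paper's reformulation is what renders the issue moot. I would recommend replacing your last paragraph with the identification $\caW_{\caQ(\caD)_{[n]}}\simeq\caQ(\caW_\caD)_{[n]}$ and its analogue for $H$, after which the proof collapses to three applications of \cref{lemas}.
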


\begin{proof}
	
Let $n \geq 2$. We want to show that the Segal map
$$\theta\colon \caQ(\caD)_{[n]} \to \caQ(\caD)_{[1]} \times_{\caD} \dots \times_{\caD} \caQ(\caD)_{[1]}$$ is an equivalence of $\infty$-categories with genuine pro-duality.

By \cref{lemas} we already know that the map $\theta$ induces on underlying $\infty$-categories an equivalence. Moreover, by \cref{definition_exact} we know that this is an equivalence between exact $\infty$-categories.

Let $\caW_\caD \to \caD \times \caD$ the pro-duality of $\caD$,
and similar for $\caQ(\caD)_{[n]}.$
To prove this equivalence is an equivalence between pro-dualities, we want to see that the induced functor
$$ \caW_{\caQ(\caD)_{[n]} } \to \caW_{\caQ(\caD)_{[1]} } \times_{\caW_\caD} \dots
\times_{\caW_\caD} \caW_{\caQ(\caD)_{[1]} }$$ is an equivalence.
Now, by definition of the cotensor of $\infty$-categories with pro-dualities, this functor identifies with the canonical functor
$$ \caQ(\caW_{\caD},\caC,\caF)_{[n]} \to  \caQ(\caW_{\caD},\caC,\caF)_{[1]}  \times_{\caW_\caD} \dots 
\times_{\caW_\caD}  \caQ(\caW_{\caD},\caC,\caF)_{[1]}  $$
which is an equivalence by \cref{lemas}.

Similarly, at the level of \emph{genuine} pro-duality, we note that the map $\theta$ induces on genuine refinements the canonical functor
$$ \caQ(H,\caC,\caF)_{[n]} \to \caQ(H,\caC,\caF)_{[1]} \times_{H} \dots \times_{H} \caQ(H,\caC,\caF)_{[1]},$$ which is an equivalence by \cref{lemas}.
\end{proof}

\begin{corollary}
Let $\caD $ be a Waldhausen $\infty$-category with genuine duality. The Segal object $$\caQ(\caD)\colon \Delta^\op \to \Wald_\infty^\mathrm{gp}$$ induces a Segal object $$\caQ(\caD)\colon \Delta^\op \to \Wald_\infty^\mathrm{gd}. $$
\end{corollary}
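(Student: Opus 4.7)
My plan is to check, for each $n \geq 0$, that $\caQ(\caD)_{[n]}$ already lives in the full subcategory $\Wald_\infty^\mathrm{gd} \subset \Wald_\infty^\mathrm{gp}$ singled out by the inclusion of \cref{rmk:inclusion_gd_to_gp}, and then to deduce the Segal condition in $\Wald_\infty^\mathrm{gd}$ from the one in $\Wald_\infty^\mathrm{gp}$ already established in \cref{proposition_Segal}.

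For the first step I would work in the model $\mathrm{Qu}$ of \cref{equiv_pg_Qu}. Under that equivalence, the inclusion $\Cat_\infty^\gd \hookrightarrow \Cat_\infty^\mathrm{gp}$ of \cref{rmk:inclusion_gd_to_gp}, given by $(-)\times [1]$, picks out the triples $(\caC, \beta\colon \caV \to \widetilde{\caC \times \caC}, \phi\colon H \to \caV^{hC_2})$ whose middle map $\beta$ is an equivalence. Given $\caD$ with genuine duality, I would then apply the cotensor description of \cref{coten}: for any $\infty$-category $K$, the middle component of $\caD^K$ is the map $\Fun(K, \widetilde{\caD \times \caD}) \to \widetilde{\Fun(K,\caD) \times \Fun(K,\caD)}$, and I would argue this is an equivalence because $\Fun(K,-)$ is a right adjoint and hence preserves the cotensor $(-)^{C_2} = \widetilde{(-)\times(-)}$ from \cref{not:tilde}. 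Taking $K = \Tw([n])$ gives $\caD^{\Tw([n])} \in \Cat_\infty^\gd$, and since $\caQ(\caD)_{[n]} \subset \caD^{\Tw([n])}$ is a full subcategory inheriting the ambient pro-duality and genuine refinement, it too lands in $\Cat_\infty^\gd$; combined with the exact structure of \cref{corx} this gives $\caQ(\caD)_{[n]} \in \Wald_\infty^\mathrm{gd}$.

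For the Segal condition, the map $\caQ(\caD)_{[n]} \to \caQ(\caD)_{[1]} \times_{\caQ(\caD)_{[0]}} \cdots \times_{\caQ(\caD)_{[0]}} \caQ(\caD)_{[1]}$ is an equivalence in $\Wald_\infty^\mathrm{gp}$ by \cref{proposition_Segal}. Its source belongs to the fully faithful subcategory $\Wald_\infty^\mathrm{gd} \subset \Wald_\infty^\mathrm{gp}$, so its target does as well, and hence the Segal pullback in $\Wald_\infty^\mathrm{gp}$ coincides with the one computed in $\Wald_\infty^\mathrm{gd}$; the Segal map is therefore an equivalence there too. The hard part will be the cotensor identification in the first step: one must carefully navigate between the intrinsic description of $\Cat_\infty^\mathrm{gp}$ and the three-component model $\mathrm{Qu}$, and verify that the equivalence $\Fun(K, \widetilde{\caD \times \caD}) \simeq \widetilde{\Fun(K,\caD) \times \Fun(K,\caD)}$ is compatible with the pro-duality structure (covering the identity on $\widetilde{\Fun(K,\caD) \times \Fun(K,\caD)}$), rather than being merely an equivalence of underlying $\infty$-categories.
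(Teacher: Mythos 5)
The first step of your argument contains a critical error. You claim that under the equivalence $\Cat_\infty^\mathrm{gp} \simeq \mathrm{Qu}$ of \cref{equiv_pg_Qu}, the full subcategory $\Cat_\infty^\gd \subset \Cat_\infty^\mathrm{gp}$ corresponds to the triples $(\caC,\beta,\phi)$ whose middle map $\beta\colon \caV \to \widetilde{\caC\times\caC}$ is an equivalence. This is false: for an $\infty$-category $\caC$ with duality, the associated pro-duality is $\caC\times[1]\to[1]$, and feeding this into the equivalence of \cref{prop:equiv_pd_sym} yields $\caV\simeq\Tw(\caC)$ with $\beta$ the twisted arrow right fibration over $\widetilde{\caC\times\caC}$ --- far from an equivalence. (You may be conflating this with the localization \cref{locx}, which is cut out by the genuine refinement $\phi$ being an identity; that is a different full subcategory, and in any case not $\Cat_\infty^\gd$.) Consequently your claimed middle component of the cotensor, $\Fun(K,\widetilde{\caD\times\caD})\to\widetilde{\Fun(K,\caD)\times\Fun(K,\caD)}$, has the wrong source: by \cref{coten} it is $\Fun(K,\caV)\to\widetilde{\Fun(K,\caD)\times\Fun(K,\caD)}$, which is not an equivalence since $\caV\to\widetilde{\caD\times\caD}$ is not.

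More to the point, the conclusion you want from this step --- that $\caD^{\Tw([n])}$, taken as a cotensor over $\Cat_\infty$ in the sense of \cref{notation_coten}, lands in $\Wald_\infty^\gd$ --- is generally false, and this is precisely why the paper introduces pro-dualities in the first place: when $K$ is an $\infty$-category without a specified duality, $\Fun(K,\caD)$ does not inherit a duality from $\caD$ (the naive candidate $F\mapsto D\circ F^\op$ has domain $K^\op$, not $K$) and only carries a pro-duality. The paper's proof sidesteps this by invoking \cref{proqa} to identify $\caQ(\caD)_{[1]}$ with $\mathrm{Amb}(\caD)$, which by \cref{notation_ambwald} is a full subcategory of the cotensor $\caD^{\widetilde{[1]\times[1]}}$ taken in $\Wald_\infty^\gd$ over $\Cat_\infty^\gd$; since $\widetilde{[1]\times[1]}$ carries a genuine duality, that cotensor is an honest Waldhausen $\infty$-category with genuine duality. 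Your second paragraph, deducing the remaining levels from the Segal condition of \cref{proposition_Segal}, is essentially the paper's reduction --- but the base case $\caQ(\caD)_{[1]}$ requires the detour through $\mathrm{Amb}(\caD)$ rather than a direct analysis of the $\Cat_\infty$-cotensor $\caD^{\Tw([n])}$.
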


\begin{proof}
In view of the Segal condition, \cref{proposition_Segal},
it is enough to see that the Waldhausen $\infty$-category with genuine pro-duality $\caQ(\caD)_{[1]}$ is a Waldhausen $\infty$-category with genuine duality.
By \cref{proqa} there is an equivalence $\caQ(\caD)_{[1]} \simeq \mathrm{Amb}(\caD)$ of Waldhausen $\infty$-categories with genuine pro-duality.
The Waldhausen $\infty$-category with genuine pro-duality $\mathrm{Amb}(\caD)$ is in fact a Waldhausen $\infty$-category with genuine duality by \cref{notation_ambwald}.
\end{proof}

\section{Comparison between real \texorpdfstring{$S$}\ - and hermitian \texorpdfstring{$Q$}\ -constructions}\label{sec:comparison}

The content of this section is a comparison between the $K$-theory spaces defined in \cite{Calmes_etal2} and \cite{realKthHSV}. We first compare the $S_\bullet$-construction of \cite{realKthHSV} and the hermitian $Q_\bullet$-construction of \cref{hermq}; see \cref{comp}. This, together with the coincidence between $Q$-constructions explained in \cref{Qconstructions_coincide} allow us to conclude the sought comparison in \cref{cor:Ktheory_coincide}.

\begin{notation}\label{rho_Qcosntruction}
For every $n \geq 0$ we consider the canonical embeddings $$ \alpha\colon [n] \hookrightarrow [n]*[n]^\op, \ \beta\colon[n]^\op \hookrightarrow [n]*[n]^\op.$$ Let $\rho_n$ be the functor
$$\Tw([n]) \to \caH(\Ar([n]*[n]^\op))$$
between posets that sends a morphism $i \leq j$ of $[n]$ to
the morphism $\alpha(i) \leq \beta(j) $ of $[n]*[n]^\op$
equipped with the unique morphism $$(\alpha(i) \leq \beta(j)) \to (\alpha(i) \leq \beta(j))^\dual = (\beta(j)^\dual \leq \alpha(i)^\dual)= (\alpha(j) \leq \beta(i))$$ in $\Ar([n]*[n]^\op)$ that exists because $\alpha(i) \leq \alpha(j) $ and $\beta(j) \leq \beta(i).$
\end{notation}

\begin{construction}\label{constr_map}
For every Waldhausen $\infty$-category with genuine duality $\caC$
let $\gamma_\caC^n$ be
the composition
$$ \caC^{\Ar([n]*[n]^\op)} \longrightarrow \caC^{\caH(\Ar([n]*[n]^\op))} \longrightarrow \caC^{\Tw([n])} $$
in $\Wald_\infty^\gd.$ The map $\gamma_\caC^n$ restricts to a map
$$\theta^n_\caC: S(\caC)_{[n]*[n]^\op} \to \caQ(\caC)_{[n]}$$
in $\Wald_\infty^\gd$. Since $\rho_n$ in \cref{rho_Qcosntruction} is natural in $[n] \in \Delta$, the map $\gamma_\caC^n $ 
and so also $\theta^n_\caC$ are natural in $[n] \in \Delta$. Hence the latter organize to a map $$\theta_\caC\colon S(\caC)\circ e \to \caQ(\caC)$$ of simplicial Waldhausen $\infty$-categories with genuine duality, where $e$ is the edgewise subdivision.
\end{construction}

\begin{theorem}\label{comp}
Let $\caC$ be a Waldhausen  $\infty$-category with genuine duality. The map $$\theta_\caC\colon S(\caC)\circ e \to \caQ(\caC)$$ of simplicial Waldhausen $\infty$-categories with genuine duality is an equivalence.	
\end{theorem}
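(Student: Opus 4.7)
The plan is to verify that $\theta_\caC$ is a levelwise equivalence of simplicial objects in $\Wald_\infty^\gd$. Once that is done, the theorem follows since equivalences in a functor $\infty$-category are detected levelwise. The two key reductions are: first, both simplicial objects are Segal and $\theta_\caC$ is a map of simplicial objects; second, low-level equivalences are already recorded elsewhere in the paper.

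\textbf{Step 1 (Segal reduction).} By \cite[Proposition 8.15]{realKthHSV}, $S(\caC)\circ e$ is a Segal object in $\Wald_\infty^\gd$, and by \cref{proposition_Segal} and the subsequent corollary, $\caQ(\caC)$ is a Segal object in $\Wald_\infty^\gd$. Since $\theta_\caC$ is a natural transformation between simplicial objects, it is compatible with the Segal maps. A map of Segal objects is an equivalence if and only if it is an equivalence on $[0]$ and $[1]$, so it suffices to check $\theta_\caC^0$ and $\theta_\caC^1$.

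\textbf{Step 2 (The case $n=0$).} One has $[0]*[0]^\op\simeq [1]$, hence $S(\caC)_{[0]*[0]^\op}=S(\caC)_{1}$, which is equivalent to $\caC$ via evaluation at $(0,1)$ (noting $A_{0,0}=A_{1,1}=0$). On the other hand $\caQ(\caC)_{[0]}\simeq \caD^{\Tw([0])}\simeq \caC$. Tracing $\rho_0$ through the construction in \cref{constr_map}, the map $\theta_\caC^0$ is identified with the identity of $\caC$ in $\Wald_\infty^\gd$, which is visibly an equivalence.

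\textbf{Step 3 (The case $n=1$).} Here $[1]*[1]^\op\simeq [3]$, so $S(\caC)_{[1]*[1]^\op}=S(\caC)_{3}$. By \cref{corqa} the restriction map $\kappa\colon S(\caC)_3\to \mathrm{Amb}(\caC)$ induced by the embedding $\widetilde{[1]\times[1]}\subset \Ar([3])$, $(i,j)\mapsto (i,j+2)$, is an equivalence in $\Wald_\infty^\gd$. By \cref{proqa} the map $\lambda\colon \mathrm{Amb}(\caC)\to \caQ(\caC)_{[1]}$ is also an equivalence. I would complete the proof by showing that $\theta_\caC^1=\lambda\circ\kappa$, so that $\theta_\caC^1$ is an equivalence as well. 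For this, one unfolds the definition of $\rho_1$: it factors as the composite
\[
\Tw([1])\;\xrightarrow{\sim}\;\caH(\widetilde{[1]\times[1]})\;\hookrightarrow\;\caH(\Ar([3])),
\]
where the first map is the equivalence of \cref{constr:lambda} and the second is $\caH$ applied to the embedding $\widetilde{[1]\times[1]}\subset \Ar([3])$. Restriction along $\rho_1$, which is the underlying functor of $\theta_\caC^1$, therefore factors exactly as $\lambda\circ\kappa$.

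\textbf{Step 4 (Compatibility with duality and genuine refinement).} The main technical point is to upgrade the identification in Step 3 from an equivalence of underlying $\infty$-categories to an equivalence in $\Wald_\infty^\gd$. This requires checking compatibility with the pro-duality and with the genuine refinement. The arguments will closely parallel the proof of \cref{proqa}: the pro-duality and genuine refinement of both $S(\caC)_3$ and $\caQ(\caC)_{[1]}$ are induced, via the cotensor description of \cref{coten} and the restriction formula for $\caH$ of cotensors, from the duality/refinement on $\caC$; the equivalence $\caH(\widetilde{[1]\times[1]})\simeq \Tw([1])$ of \cref{constr:lambda} is $C_2$-equivariant, and the diagonal embeddings into $\caH(\Ar([3]))$ commute with the duality on both sides.

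I expect Step 4 to be the main obstacle. The underlying-categorical identification is essentially bookkeeping of posets, but verifying that the identification $\theta_\caC^1=\lambda\circ\kappa$ holds in the whole of $\Wald_\infty^\gd$, i.e.\ at the level of the pro-dualities $\caW$ and the right fibrations $H$, requires carefully composing the internal-hom formulas of \cref{prop:internal_hom_produality} and \cref{coten} with the $C_2$-equivariance of $\rho_1$. Once this naturality in the genuine data is established, Steps 1--3 assemble to give the desired equivalence.
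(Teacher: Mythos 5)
Your proposal is correct and follows essentially the same approach as the paper's proof: reduce via the Segal property to $n=0,1$, observe that $\theta_\caC^0$ is the identity on $\caC$, and identify $\theta_\caC^1$ with the composite of the equivalences $\kappa\colon S(\caC)_3 \to \mathrm{Amb}(\caC)$ from \cref{corqa} and $\lambda\colon \mathrm{Amb}(\caC) \to \caQ(\caC)_{[1]}$ from \cref{proqa}. Your Step 4 worry about upgrading the factorization from underlying $\infty$-categories to $\Wald_\infty^\gd$ is handled in the paper more succinctly, by the observation that $\theta_\caC$, $\kappa$ and $\lambda$ are all constructed from the start as maps in $\Exact_\infty^\gd$ (via the cotensor formalism of \cref{maaap} and \cref{constr:lambda}), so the factorization of $\rho_1$ as $\Tw([1]) \xrightarrow{\sim} \caH(\widetilde{[1]\times[1]}) \hookrightarrow \caH(\Ar([3]))$ that you correctly identify already yields the factorization $\theta_\caC^1 \simeq \lambda\circ\kappa$ at the level of genuine dualities, with no further work.
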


\begin{proof}
We know that both source and target of $\theta_\caC$ are Segal objects. Thus, it is enough to prove that for $n=0,1$ the map $$\theta_{[n]}\colon S(\caC)_{[n]*[n]^\op} \to \caQ(\caC)_{[n]}$$ of exact $\infty$-categories with genuine duality is an equivalence.

For $n=0$ this is clear, as the map $\theta_{[0]}$ identifies with the identity of $\caC.$ For $n=1$, we recall that by \cite[Proposition 8.13]{realKthHSV}, the embedding  $\widetilde{[1] \times [1]} \subset \Ar([3]) $ of $\infty$-categories with duality sending 
$$(0,0)\mapsto (0,2), \hspace{0.8em} (0,1)\mapsto (0,3),\hspace{0.8em} (1,0)\mapsto (1,2),\hspace{0.5em}\text{and}\hspace{0.5em}(1,1)\mapsto(1,3) $$
induces an equivalence $S(\caC)_{[3]} \simeq \mathrm{Amb}(\caC)$ of exact $\infty$-categories with genuine duality. Moreover, by \cref{proqa} there is an equivalence of exact $\infty$-categories with genuine duality $\mathrm{Amb}(\caC) \to \caQ(\caC)_{[1]}$, which is the restriction of the canonical map
$$ \caC^{\widetilde{[1] \times [1]}} \to \caC^{\caH(\widetilde{[1] \times [1]})}
\simeq \caC^{\Tw([1])}$$
induced by the equivalence $ \caH(\widetilde{[1] \times [1]}) \simeq \Lambda_0^2 \simeq \Tw([1])$ of \cref{constr:lambda}. Finally, the map $\theta_{[1]}$ factors in $\Exact_\infty^\gd$ as below. 
\begin{tz}
\node[](1) {$S(\caC)_{[3]}$}; 
\node[below of=1,xshift=2cm,yshift=0cm](3) {$\mathrm{Amb}(\caC)$};
\node[above of=3,xshift=2cm,yshift=-0cm](2) {$\caQ(\caC)_{[1]}$}; 

\draw[->] (1) to node[above,la]{$\theta_{[1]}$} (2);
\draw[->] (3) to node[right,la,xshift=-2pt,yshift=-6pt]{$\simeq$} (2);
\draw[->] (1) to node[left,la,yshift=-4pt]{$\simeq$} (3);
\end{tz}
\end{proof}

\section{Comparison of hermitian \texorpdfstring{$K$}\ -theory spaces}\label{sec:comparison_Kth}

We use the tools constructed before to prove an equivalence between the real $K$-theory spaces of \cite{Calmes_etal1} and \cite{realKthHSV} when both frameworks make sense.

\begin{definition}
Let $\caC$ be a stable $\infty$-category. A quadratic functor on $\caC$ is a 2-excisive functor $\caC^\op \to \Sp$. 
\end{definition}

\begin{remark}

By definition a quadratic right fibration classifies a quadratic functor $\caC^\op \to \Sp_{\geq0}$ that by \cite[6.14.(2)]{realKthHSV} is the connective cover of a unique quadratic functor $\caC^\op \to \Sp$.

\end{remark}

\begin{definition}

A quadratic functor is non-degenerate if its polarization is non-degenerate.

\end{definition}

\begin{definition}
\begin{enumerate}
\item A Poincar\'e $\infty$-category is a pair $(\caC, \phi),$
where $\caC$ is a stable $\infty$-category and
$\phi$ is a non-degenerate quadratic functor.
\item A map of Poincar\'e $\infty$-categories $(\caC, \phi) \to (\caD, \psi)$ is a pair $(F, \alpha),$ where $F: \caC \to \caD$ is an exact functor and $\alpha$ is a natural transformation $ \phi \to \psi \circ F^\op$ 
that induces on polarizations a duality preserving functor.
\end{enumerate}
 We denote by $\Cat_\infty^p$ be the $\infty$-category of small Poincar\'e $\infty$-categories.
\end{definition}

\begin{theorem}\label{equiv_qu_gd}\label{eqiv_Poincare_stablegd}
There is an equivalence of $\infty$-categories
$$\Cat_\infty^p \simeq \St^{\mathrm{gd}}$$
left tensored over $\Spc^{C_2}$.
\end{theorem}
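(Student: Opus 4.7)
The plan is to exhibit the claimed equivalence as a restriction of the more general equivalence already established in \cite[Section 6.3]{realKthHSV} between stable $\infty$-categories with genuine duality and suitable quadratic data. Concretely, a genuine duality $(\caC, \phi\colon H \to \caH^\lax(\caC))$ on a stable $\infty$-category is equivalent to the datum of a quadratic functor $q\colon \caC^\op \to \Sp$ whose polarization $B_q(x,y) \simeq \mathrm{fib}(q(x\oplus y)\to q(x)\oplus q(y))$ encodes the underlying duality, and whose connective cover $\Omega^\infty q\colon \caC^\op \to \Spc$ classifies the right fibration $\phi$.

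First I would invoke the structural result of \cite[Section 6.3]{realKthHSV}, which (together with the remark cited as \cite[6.14.(2)]{realKthHSV}) identifies the $\infty$-category of stable $\infty$-categories equipped with a right fibration $H \to \caH^\lax(\caC)$ (without any non-degeneracy condition on the underlying duality) with the $\infty$-category of pairs $(\caC, q)$ where $q\colon \caC^\op \to \Sp$ is a $2$-excisive functor. In this correspondence the polarization $B_q$ recovers the symmetric bilinear pairing underlying the duality, the $\Omega^\infty$ of $q$ is the presheaf classifying the genuine refinement $H$, and morphisms $(F,\alpha)$ of quadratic data correspond on the other side to exact functors together with a compatible morphism of right fibrations over $\caH^\lax(-)$.

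Next I would match the non-degeneracy conditions on both sides. On the Poincaré side, non-degeneracy of $q$ is by definition non-degeneracy of its polarization $B_q$, i.e.\ the induced functor $\caC^\op \to \caC$ classified by $B_q$ is an equivalence. On the genuine-duality side, the defining datum of an object in $\St^\gd$ requires that the underlying duality on $\caC$ be an actual duality, equivalently an equivalence $\caC^\op \xrightarrow{\sim} \caC$. Under the correspondence these two conditions are literally the same, so the equivalence of \cite[Section 6.3]{realKthHSV} restricts to an equivalence of the respective full subcategories cut out by non-degeneracy.

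Finally I would check morphism compatibility: a map $(F,\alpha)\colon (\caC,q)\to(\caD,q')$ in $\Cat_\infty^p$ must induce a duality-preserving functor on polarizations, and the corresponding condition in $\St^\gd$ is that the exact functor preserves the duality and lifts to a map of genuine refinements. The main obstacle is to verify that these two morphism-level conditions agree; this reduces to checking that the bilinear polarization is functorial in the expected way and that the induced map $B_q \to B_{q'}\circ(F^\op\times F^\op)$ coincides with the map on the cross-effects of the classifying right fibrations. Once this compatibility is confirmed, combining the object-level equivalence with the morphism-level identification yields the desired equivalence $\Cat_\infty^p \simeq \St^\gd$.
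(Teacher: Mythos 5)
Your proposal is essentially the paper's own proof, spelled out in more detail. The paper's argument is a two-line invocation of \cite[Theorem 6.27]{realKthHSV} (giving $\St^{\mathrm{gd}} \simeq \mathrm{QuR}$) followed by \cite[Proposition 6.14.(2)]{realKthHSV} to pass from quadratic functors valued in $\Sp_{\geq 0}$ to quadratic functors valued in $\Sp$, with non-degeneracy preserved throughout; you invoke the same two results and then make explicit the bookkeeping (polarization encodes the duality, $\Omega^\infty q$ classifies the genuine refinement, morphism-level compatibility), which the paper leaves implicit inside the cited equivalence. The only cosmetic difference is that you phrase the argument as establishing a larger correspondence without non-degeneracy and then restricting to the non-degenerate objects, whereas in $\St^{\mathrm{gd}}$ the underlying duality is already required to be an equivalence by construction, so the paper never passes through a degenerate version; this does not change the substance.
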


\begin{proof}
By \cite[Theorem 6.27]{realKthHSV}, there is a canonical equivalence when know that there is a canonical equivalence between the $\infty$-categories of stable $\infty$-categories with genuine duality and that of quadratic right fibrations
$$\St^{\mathrm{gd}} \simeq \mathrm{QuR}.$$
By definition (non-degenerate) quadratic right fibrations classify (non-degenerate) quadratic functors to $\Sp_{\geq0}$,
which by \cite[Proposition 6.14.(2)]{realKthHSV}
uniquely extend to (non-degenerate) quadratic functors to $\Sp.$
\end{proof}

\begin{remark}\label{Qconstructions_coincide}
There is a commutative square
$$\begin{xy}
\xymatrix{\mathrm{St}^\mathrm{gd}  \ar[d]^{\caQ} \ar[rr]^{\xi}
&& \Cat_\infty^p\ar[d]^{\caQ'} 
\\
\Fun(\Delta^\op,\mathrm{St}^\mathrm{gd}) \ar[rr]^{\Fun(\Delta^\op,\xi)} &&\Fun(\Delta^\op,\Cat_\infty^p),}
\end{xy}$$
where $\caQ'$ is the hermitian $Q$-construction of \cite[Definition 2.2.1.]{Calmes_etal2} and $\caQ$ is the hermitian $Q$-construction of \cref{hermq}. To see this, we note that the equivalence $\xi\colon \mathrm{St}^\mathrm{gd} \simeq \Cat_\infty^p$ of \cref{equiv_qu_gd} preserves cotensors with small $\infty$-categories. Since
 both $\caQ(\caC)_n$ and $\caQ'(\caD)_n$ are defined respectively as the restrictions of the cotensors
$\caC^{\Tw([n])}$ in $\mathrm{St}^\mathrm{gd}$ and $ \caD^{\Tw([n])}$ in $\Cat_\infty^p$ (see  \cite[Definition 2.2.1.]{Calmes_etal2} and \cite[Definition 6.3.4.]{Calmes_etal1}).
\end{remark}

\begin{notation}
Let $(\caC,\phi)$ be a Poincar\'e $\infty$-category.
Let $(\caC,\phi)[1]\coloneqq (\caC,(-)[1] \circ \phi) $, which is again a Poincar\'e $\infty$-category,
where $(-)[1]\colon \Sp \to \Sp$ is the suspension functor.
\end{notation}

The following definition, adapted to our notation for non-degenerate hermitian objects, is \cite[Corollary 4.1.5.]{Calmes_etal2}.

\begin{definition}\label{def:GW} Let $(\caC,\phi)$ be a Poincar\'e $\infty$-category. The Grothendieck-Witt space of  $(\caC,\phi)$  is
$$\GW(\caC)\coloneqq \Omega |(-)^{C_2} \circ (-)^\simeq \circ \caQ((\caC,\phi)[1]) |.$$
\end{definition}

Let $(-)_+\colon \Spc \to \Spc_*$ be the left adjoint of the forgetful functor that adds a disjoint base point.

\begin{definition}
Let $S^{1,1} $ be the cofiber of the map of pointed genuine $C_2$-spaces
$$ (C_2)_+ \to *_+ \to S^{1,1}. $$ 
\end{definition}

\begin{remark}
Since the forgetul functor $\Spc^{C_2} \to \Spc$ preserves colimits,
the underlying space of $S^{1,1}$ is the 1-sphere.
The $C_2$-action on $S^{1,1}$ is the sign representation.
    
\end{remark}

\begin{notation}
Let $\Omega^{1,1}\colon \Spc^{C_2} \to \Spc^{C_2} $
the functor cotensoring with $S^{1,1}.$
\end{notation}

The following definition is \cite[Definition 9.21.]{realKthHSV}:

\begin{definition} Let $(\caC,\phi)$ be a Waldhausen $\infty$-category with genuine duality. The real $K$-theory space of  $(\caC,\phi)$  is
$$\KR(\caC)\coloneqq \Omega^{1,1} | (-)^\simeq \circ S(\caC) |.$$
\end{definition}

To establish the first part of the comparison, we need the following result that relates, for any genuine $C_2$-space, the fixed points of $\Omega^{1,1}(X)$ with those of $X$.

\begin{lemma}[{\cite[Corollary 9.5]{realKthHSV}}] For every genuine $C_2$-space $X$, there is a fiber sequence of spaces
$$\Omega^{1,1}(X)^{C_2}\to X^{C_2} \to X.$$
\end{lemma}

\begin{theorem}\label{cor:Ktheory_coincide}
Let $\caC$ be a stable $\infty$-category with genuine duality and $\caC'$ the corresponding Poincar\'e $\infty$-category via the equivalence of \cref{eqiv_Poincare_stablegd}. There is an equivalence $$\KR(\caC)^{C_2} \to \GW(\caC')$$ of grouplike $\bE_\infty$-spaces.
\end{theorem}

\begin{proof}
The map  $$ S(\caC) \circ e \longrightarrow \caQ(\caC) $$ of simplicial $\infty$-categories with genuine duality of \cref{constr_map}, which is an equivalence by \cref{comp}, together with \cref{Qconstructions_coincide}
induce an equivalence $$ (-)^\simeq \circ S(\caC) \circ e \longrightarrow (-)^\simeq \circ\caQ(\caC') $$ of simplicial spaces with genuine $C_2$-action. The latter map induces on $C_2$-fixed points of  geometric realizations the following map of grouplike $\bE_\infty$-spaces 
$$
\KR(\caC)^{C_2}=(\Omega^{1,1}|(-)^\simeq \circ S(\caC) \circ e|)^{C_2} \longrightarrow (\Omega^{1,1}|(-)^\simeq \circ\caQ(\caC')|)^{C_2}  \simeq $$
$$ \fib(|(-)^\simeq \circ\caQ(\caC')|^{C_2} \to |(-)^\simeq \circ\caQ(\caC')|) \simeq $$$$ \fib(|(-)^{C_2} \circ (-)^\simeq \circ\caQ(\caC')| \to |(-)^\simeq \circ\caQ(\caC')|) \simeq $$
$$ \Omega |(-)^{C_2} \circ (-)^\simeq \circ \caQ((\caC,\phi)[1]) |= \GW(\caC').$$ 
\end{proof}

\section{Comparison of real \texorpdfstring{$K$}-theory spectra.}
The aim of this section is to upgrade the comparison between the two approaches by proving the following result. 

\begin{theorem}\label{theorem_compsp}
Let $\caC$ be a stable $\infty$-category with genuine duality and $\caC'$ the corresponding Poincar\'e $\infty$-category via the equivalence in \cref{eqiv_Poincare_stablegd}. There is a canonical equivalence $$\KR(\caC) \to \KR'(\caC')$$ of genuine $C_2$-spectra.
\end{theorem}

Before diving into the proof, we recall both constructions of the real $K$-theory spectra.

\subsection{Construction of Calm\`es et al.}

\begin{notation}

Let $\caO_{C_2}$ be the orbit category of $C_2.$
\end{notation}

\begin{definition}
A genuine $C_2$-category is a functor $\caO_{C_2}^\op \to \Cat_\infty.$ We write $$ \Cat_\infty^{C_2}\coloneqq \Fun(\caO^\op_{C_2},\Cat_\infty)$$ for
the $\infty$-category of genuine $C_2$-categories.
\end{definition}

\begin{example}\label{example_span}
By \cite[Definition 4.10.]{nardin2016parametrized} there is a genuine $C_2$-category
$ \overline{\Span(\Fin[C_2])}$
such that $$ \overline{\Span(\Fin[C_2])}(*) \simeq \Span(\Fin[C_2]) $$
and $$ \overline{\Span(\Fin[C_2])}(C_2) \simeq \Span(\Fin[C_2]/C_2) $$
and the canonical functor
$$ \overline{\Span(\Fin[C_2])}(*) \simeq \Span(\Fin[C_2]) \to  \overline{\Span(\Fin[C_2])}(C_2) \simeq \Span(\Fin[C_2]/C_2) $$
is induced by the functor
$$ (-) \times C_2\colon\Fin[C_2] \to \Fin[C_2]/C_2. $$ 
\end{example}

\begin{remark}
By \cite[Lemma A.59.]{realKthHSV} there is a canonical forgetful functor
$$ \nu\colon\Cat_\infty^{\Spc^{C_2}} \to \Cat_\infty^{C_2}.$$

For every real $\infty$-category $\caC$ the $\infty$-category
$\nu(\caC)(*)$ is the $\infty$-category that carries the enrichment
and $\nu(\caC)(C_2) $ is the underlying $\infty$-category.
\end{remark}

In what follows we recall notions of preadditivity introduced in \cite{Calmes_etal1} and \cite{realKthHSV} before comparing them in \cref{rmk:compare_preadditivity}. The next definition is the content of Definition 7.4.9 and Lemma 7.4.4 in \cite{Calmes_etal1} combined\textemdash note that we call here $C_2$-preadditive what is there called $C_2$-semiadditive.

\begin{definition}\label{def:C2_preadditive}
A genuine $C_2$-category $\caC$ is $C_2$-preadditive if the $\infty$-categories
$\caC(*), \caC(C_2)$ are preadditive and the canonical functor
$g\colon \caC(*) \to \caC(C_2)$ admits a left adjoint $f$ and a right adjoint $h$ such that the following holds:

\begin{enumerate}
\item For every $X \in \caC(C_2)$ the morphism
$$ X \coprod \tau(X) \to g(f(X))$$ corresponding to the codiagonal morphism
$$ f(X \coprod \tau(X)) \simeq f(X) \coprod f(\tau(X)) \simeq f(X) \coprod f(X) \to f(X)$$ is an equivalence.

\item For every $X \in \caC(C_2)$ the morphism
$$ g(h(X)) \to X \prod \tau(X) $$ corresponding to the diagonal morphism
$$ h(X) \to h(X \prod \tau(X))\simeq h(X) \prod h(\tau(X))\simeq h(X) \prod h(X) $$ is an equivalence.

\item For every $X \in \caC(C_2)$ the morphism
$$f(X) \to h(X) $$ corresponding to the summand inclusion
$$ X \to g(h(X)) \simeq X \oplus \tau(X)$$ is an equivalence.

\end{enumerate}
\end{definition}

\begin{definition}
Let $\caC, \caD$ be genuine $C_2$-categories such that
$\caC(*), \caC(C_2)$ admit finite coproducts and the functor
$\caC(*) \to \caC(C_2)$ preserves finite coproducts and admits a left adjoint, and the same for $\caD.$ A genuine $C_2$-functor $\caC \to \caD$ is $C_2$-preadditive if
the induced functors
$$\caC(*) \to \caD(*), \ \caC(C_2) \to \caD(C_2)$$ preserve finite coproducts and commute with the left adjoints of the functors
$$ \caC(*) \to \caC(C_2), \ \caD(*) \to \caD(C_2).$$
    
\end{definition}

\begin{remark}

Let $\caC, \caD$ be $C_2$-preadditive genuine $C_2$-categories.
For a $C_2$-preadditive genuine $C_2$-functor $\caC \to \caD$ 
the induced functors
$$\caC(*) \to \caD(*), \ \caC(C_2) \to \caD(C_2)$$ also preserve finite products and commute with the right adjoints of the functors
$$ \caC(*) \to \caC(C_2), \ \caD(*) \to \caD(C_2).$$
    
\end{remark}

Now we recall the notion of genuine preadditivity of \cite[Definition 5.13]{realKthHSV}.

\begin{definition}\label{def:genuine_preadd}
We call a real $\infty$-category $\caC$ genuine preadditive if 
\begin{enumerate}
\item the underlying (that is, forgetting the enrichment) $\infty$-category of $\caC$ is preadditive,
\item $\caC$ admits a zero object, finite products and finite coproducts (in the $\Spc^{C_2}$-enriched sense) and tensors and cotensors with $C_2$, and
\item for every $X \in \caC$, the natural map $C_2 \otimes X \to X^{C_2}$ 
corresponding to the $C_2$-equivariant map
$$C_2 \times C_2 \longrightarrow * \xrightarrow{\id_\X} \Mor_{\caC}(X, X) $$ is an equivalence.
\end{enumerate}
\end{definition}

\begin{remark}\label{rmk:compare_preadditivity} In the following items we describe why \cref{def:C2_preadditive} and \cref{def:genuine_preadd} coincide.
\begin{enumerate}
\item A real $\infty$-category $\caC$ admits finite conical (co)products 
if and only if $\nu(\caC)(*), \nu(\caC)(C_2)$ admit finite (co)products and
the functor $\nu(\caC)(*) \to \nu(\caC)(C_2)$ preserves finite (co)products.
 
\item A real $\infty$-category $\caC$ admits (co)tensors with $C_2$
if and only if the functor 
$$\nu(\caC)(*) \to \nu(\caC)(C_2)$$ admits a left (right) adjoint. 
If such exist, the canonical functor
$$\nu(\caC)(*) \to \nu(\caC)(C_2)$$ followed by the left (right) adjoint assigns the (co)tensor with $C_2$.

\item Hence a real $\infty$-category $\caC$ is genuine preadditive if and only if the underling genuine $C_2$-category $\nu(\caC)$ is $C_2$-preadditive.

\item Hence a real functor between real $\infty$-categories having finite conical coproducts and tensors with $C_2$ preserves finite conical coproducts and tensors with $C_2$ if and only if the underling genuine $C_2$-functor is $C_2$-preadditive.
\end{enumerate}
\end{remark}

In the following we recall the generalization of the idea that every genuine $C_2$-spectrum has an underlying spectral Mackey functor, which is a finite products preserving functor $\Span(\Fin[C_2]) \to \Sp$ (see \cite{barwick2017spectral,guillou2024models}) \cref{rho} is \cite[Proposition 7.4.16.]{Calmes_etal1}, we include a proof as it aids to prove \cref{rho2} which will be crucial for the comparison we seek.

\begin{proposition}\label{prop_Mackey} Let $\caC$ be a $C_2$-preadditive
genuine $C_2$-category.

\begin{enumerate}
\item\label{rho} There is a canonical functor
$$ \rho_\caC: \caC(*) \to \Fun^{\prod}(\Span(\Fin[C_2]),\caC(*))$$
such that the composition with the functor 
$$\Fun^{\prod}(\Span(\Fin[C_2]),\caC(*)) \to \caC(*) $$ evaluating
at $*$ factors as the functor $\caC(*) \to \caC(C_2)$ followed by its  left adjoint.

\vspace{1mm}
\item\label{rho2} Let $\caD$ be a $C_2$-preadditive genuine $C_2$-category and $\caC \to \caD$ a $C_2$-preadditive genuine $C_2$-functor.
There is a canonical commutative square:

$$\begin{xy}
\xymatrix{\caC(*) \ar[d]^{\rho_\caC} \ar[rrr]^{\phi(*)}
&&& \caD(*) \ar[d]^{\rho_\caD}
\\
\Fun^{\prod}(\Span(\Set[C_2]),\caC(*)) \ar[rrr]^{\phi(*)_*} &&& \Fun^{\prod}(\Span(\Set[C_2]),\caD(*)).}
\end{xy}$$
\end{enumerate}
    
\end{proposition}

\begin{proof}

(1): For every genuine $C_2$-categories $\caC, \caD$ let $\Fun^{C_2}(\caC,\caD)$
be the morphism $\infty$-category in $\Cat_\infty^{C_2}.$
The functor $$(-)^{C_2}: \Cat_\infty^{C_2} \to \Cat_\infty$$ taking $C_2$-fixed points gives rise to a functor
\begin{equation}\label{mapos}
\Fun^{C_2}(\caC,\caD) \to \Fun(\caC(*),\caD(*)).\end{equation}

We use the genuine $C_2$-category $ \overline{\Span(\Fin[C_2])}$ of \cref{example_span}.

Let $$ \Fun^{C_2, \prod}(\overline{\Span(\Fin[C_2])},\caC) \subset \Fun^{C_2}(\overline{\Span(\Fin[C_2])},\caC) $$
be the full subcategory spanned by the $C_2$-preadditive genuine $C_2$-functors.

The functor (\ref{mapos}) restricts to a functor $$\xi: \Fun^{C_2, \prod}(\overline{\Span(\Fin[C_2])},\caC) \to \Fun^{\prod}(\Span(\Fin[C_2]),\caC(*)).$$

Evaluation at the final $C_2$-set gives a functor $$\ev_*: \Fun^{\prod}(\Span(\Fin[C_2]),\caC(*)) \to \caC(*).$$

By \cite[Proposition 5.11., Theorem 6.5.]{nardin2016parametrized} the composition 
$$\Fun^{C_2, \prod}(\overline{\Span(\Fin[C_2])},\caC) \xrightarrow{\xi} \Fun^{\prod}(\Span(\Fin[C_2]),\caC(*)) \to \caC(*) $$
is an equivalence.
Let $\rho_\caC$ be the composition 
$$\caC(*) \simeq \Fun^{C_2, \prod}(\overline{\Span(\Fin[C_2])},\caC) \xrightarrow{\xi} \Fun^{\prod}(\Span(\Fin[C_2]),\caC(*)).$$

(2): Since $\phi$ preserves $C_2$-coproducts, there are canonical 
commutative squares
$$\begin{xy}
\xymatrix{\Fun^{C_2, \prod}(\overline{\Span(\Fin[C_2])},\caC) \ar[d]^{\xi} \ar[rrr]^{\phi_*}
&&& \Fun^{C_2, \prod}(\overline{\Span(\Fin[C_2])},\caD) \ar[d]^{\xi}
\\
\Fun^{\prod}(\Span(\Set[C_2]),\caC(*)) \ar[rrr]^{\phi(*)_*}
&&& \Fun^{\prod}(\Span(\Set[C_2]),\caD(*)) }
\end{xy}$$
and 
$$\begin{xy}
\xymatrix{\Fun^{\prod}(\Span(\Set[C_2]),\caC(*)) \ar[d]^{\ev_*} \ar[rrr]^{\phi(*)_*}
&&& \Fun^{\prod}(\Span(\Set[C_2]),\caD(*)) \ar[d]^{\ev_*}
\\
\caC(*) \ar[rrr]^{\phi(*)} &&&  \caD(*)}
\end{xy}$$

This proves (2).
\end{proof}

\begin{remark}\label{rmk:models_gensp} We recall the equivalence between the standard model and the Mackey functor model of genuine $C_2$-spectra.
The $\infty$-category $\Sp^{C_2}$ is genuine preadditive by \cite[Lemma 10.19]{realKthHSV}. Hence there is a functor
$$\rho_{\nu(\Sp^{C_2})}\colon \Sp^{C_2} \to \Fun^{\prod}(\Span(\Set[C_2]),\Sp^{C_2}).$$

Postcomposition with the functor $(-)^{C_2}\colon \Sp^{C_2} \to \Sp$ induces a functor
$$ \Fun^{\prod}(\Span(\Set[C_2]),\Sp^{C_2}) \to \Fun^{\prod}(\Span(\Set[C_2]),\Sp).$$

Now, taking the composition we get a functor $$\Sp^{C_2} \xrightarrow{\rho_{\nu(\Sp^{C_2})}} \Fun^{\prod}(\Span(\Set[C_2]),\Sp^{C_2})\to \Fun^{\prod}(\Span(\Set[C_2]),\Sp),$$
which is an equivalence, see \cite{barwick2017spectral}, \cite{guillou2024models}.
\end{remark}

\begin{corollary}\label{cor_form}

Let $\caC$ be a $C_2$-preadditive genuine $C_2$-category and $\phi: \caC \to \nu(\Sp^{C_2})$ a $C_2$-preadditive genuine $C_2$-functor.
There is a canonical commutative square as below
$$\begin{xy}
\xymatrix{\caC(*) \ar[d]^{\rho_\caC} \ar[rrr]^{\phi(*)}
&&& \Sp^{C_2} \ar[d]^{\simeq}
\\
\Fun^{\prod}(\Span(\Set[C_2]),\caC(*)) \ar[rrr]^{((-)^{C_2} \circ \phi(*))_*} &&& \Fun^{\prod}(\Span(\Set[C_2]),\Sp).}
\end{xy}$$

\end{corollary}

\begin{proof}
By \cref{prop_Mackey} there is a canonical commutative square:
$$\begin{xy}
\xymatrix{\caC(*) \ar[d]^{\rho_\caC} \ar[rrr]^{\phi}
&&& \Sp^{C_2} \ar[d]^{\rho_{\nu(\Sp^{C_2})}}
\\
\Fun^{\prod}(\Span(\Set[C_2]),\caC(*)) \ar[rrr]^{\phi_*} &&& \Fun^{\prod}(\Span(\Set[C_2]),\Sp^{C_2}).}
\end{xy}$$
We prolong this square with the functor
$$\Fun^{\prod}(\Span(\Set[C_2]),\Sp^{C_2}) \to \Fun^{\prod}(\Span(\Set[C_2]),\Sp)$$ induced by the functor $(-)^{C_2}\colon \Sp^{C_2} \to \Sp$ taking $C_2$-fixed points to obtain the result. Note that by \cref{rmk:models_gensp}, the vertical right map is an equivalence.
\end{proof}

In \cite{Calmes_etal2} the authors define
real $K$-theory the following way:

\begin{definition}\label{def_realK_9authors}
The real $K$-theory functor
$$\KR'\colon \Cat^p \to \Fun^{\prod}(\Span(\Set[C_2]),\Sp) \simeq \Sp^{C_2} $$
is the composition
$$ \Cat_\infty^p \xrightarrow{\rho_{\Cat_\infty^p}} \Fun^{\prod}(\Span(\Set[C_2]),\Cat_\infty^p) \xrightarrow{\GW_*} \Fun^{\prod}(\Span(\Set[C_2]),\Sp)$$
where $\GW$ is as in \cref{def:GW}.

\end{definition}

\subsection{Our construction} We briefly recall some results needed to understand our definitition of real $K$-theory from \cite{realKthHSV}, which we recall in \cref{def:KR_ours}

\begin{proposition}[{\cite[Theorem A.50]{realKthHSV}}]\label{prop_preaddlift}
Let $\caC$ be a preadditive real $\infty$-category and $\caD$ a real $\infty$-category that admits finite conical coproducts.
The induced functor
$$ \Fun^{\Spc^{C_2}, \prod}(\caC,\Mon_{\bE_\infty}(\caD)) \to \Fun^{\Spc^{C_2}, \prod}(\caC,\caD) $$
is an equivalence.

    
\end{proposition}

\begin{definition}

Let $\caC$ be a preadditive $\infty$-category and $X \in \caC$.
The shear map $$X \times X \to X \times X$$ on $X$
is the map whose projection to the first component is the fold map
$$X \times X \simeq X \coprod X \to X $$ and whose projection to the second component is the projection to the second component.
    
\end{definition}

\begin{corollary}\label{cor_liftgrp}

Let $\caC$ be a preadditive real $\infty$-category.
The induced functor
$$ \Fun^{\Spc^{C_2}, \prod}(\caC,\Fun(\caO^\op_{C_2}, \Sp_{\geq 0})) \to \Fun^{\Spc^{C_2}, \prod}(\caC,\Spc^{C_2}) $$
is fully faithful and the essential image precisely consists of the real functors $\phi: \caC \to \Spc^{C_2}$ preserving finite products 
such that for every $X \in \caC$ the shear map $$ \phi(X) \times \phi(X) \to \phi(X) \times \phi(X) $$ on $\phi(X)$  is an equivalence.
    
\end{corollary}

\begin{proof}
By \cref{prop_preaddlift} every real functor $\phi: \caC \to \Spc^{C_2}$ preserving finite products 
such that for every $X \in \caC$ the shear map $$ \phi(X) \times \phi(X) \to \phi(X) \times \phi(X) $$ on $\phi(X)$  is an equivalence, 
uniquely lifts to a real functor $$\caC \to \Grp_{\bE_\infty}(\Spc^{C_2}) \subset \Mon_{\bE_\infty}(\Spc^{C_2}) $$ preserving finite products. Moreover, there is a canonical equivalence $$\Fun(\caO^\op_{C_2}, \Sp_{\geq 0}) \simeq \Fun(\caO^\op_{C_2},  \Grp_{\bE_\infty}(\Spc)) \simeq \Grp_{\bE_\infty}(\Spc^{C_2}),$$
which concludes the proof.
\end{proof}

\begin{remark}
There is a canonical equivalence
$$\Fun(\caO^\op_{C_2}, \Sp) \simeq \Sp(\Fun(\caO^\op_{C_2}, \Spc) = \Sp(\Spc^{C_2}) = \Sp_{S^{1,0}}(\Spc^{C_2}).$$ In \cite{realKthHSV} we used the terminology on the right, but for the purposes of this note, we use the terminology on the left.
\end{remark}

\begin{definition}\label{def:KR_ours}

The real functor
$$\KR : \Wald_\infty^\gd \to \Fun(\caO^\op_{C_2}, \Sp_{\geq 0}) $$
is the unique real functor 
of \cref{cor_liftgrp} preserving finite products that lifts the real functor
$$\KR: \Wald_\infty^\gd \to \Spc^{C_2} $$
of \cite[Definition 9.21.]{realKthHSV}, which preserves finite products and sends any Waldhausen
$\infty$-category with genuine duality to a genuine $C_2$-space whose shear map is an equivalence \cite[Lemma 10.32.]{realKthHSV}.

\end{definition}

Next we recall how we lift the real $K$-theory real functor
$$\KR\colon \Wald_\infty^\gd \to \Fun(\caO_{C_2}^\op,\Sp) $$
to the real $\infty$-category $\Sp^{C_2}$ of genuine $C_2$-spectra.

\begin{definition}

A real $\infty$-category is reduced if it admits a zero object.
A real functor is reduced if it preserves the zero object.
    
\end{definition}

\begin{remark}
Every reduced real $\infty$-category uniquely refines to a $\Spc^{C_2}_*$-enriched $\infty$-category.
Every reduced real functor uniquely refines to a $\Spc^{C_2}_*$-enriched functor.
\end{remark}

The next definition is \cite[Definition 9.6.]{realKthHSV}:

\begin{definition}
Let $\caC$ be a reduced real $\infty$-category that admits tensors with
$S^{1,1} $ and $\caD$ a reduced real $\infty$-category that admits cotensors with $S^{1,1}.$

A real functor $\caC \to \caD$ is genuine excisive
if for every $X \in \caC$ the morphism
$$ F(X) \to F(S^{1,1} \otimes X)^{S^{1,1}} $$
corresponding to the morphism 
$$S^{1,1} \otimes F(X) \to F(S^{1,1} \otimes X) $$
is an equivalence, where tensors and cotensors are formed with respect to the $\Spc^{C_2}_*$-enrichment.
    
\end{definition}

The following is \cite[Proposition 10.20.]{realKthHSV}:

\begin{proposition}
Let $\caC$ be a reduced real $\infty$-category that admits tensors with
$S^{1,1} $.
\begin{enumerate}
\item Every genuine excisive real functor $$\phi: \caC \to \Fun(\caO_{C_2}^\op,\Sp) $$ uniquely lifts to a 
genuine excisive real functor $$\phi': \caC \to \Sp^{C_2}.$$

\item If $\phi$ preserves finite products, then also $\phi'$
preserves finite products.

\item If $\phi$ preserves cotensors with finite $C_2$-spaces, then also $\phi'$
preserves cotensors with finite $C_2$-spaces.
\end{enumerate}
\end{proposition}

The next definition is \cite[Definition 9.21.]{realKthHSV}:

\begin{definition}
We define the real functor
$$\KR\colon \Wald_\infty^\gd \to \Spc^{C_2} $$
as the unique genuine excisive real functor
$$ \Wald_\infty^\gd \to \Spc^{C_2} $$
that presererves finite products and cotensors with finite $C_2$-sets
lifting the genuine excisive real functor
$$\KR : \Wald_\infty^\gd \to \Fun(\caO_{C_2}^\op,\Sp) $$
that preserves finite products and cotensors with finite $C_2$-sets.
\end{definition}

\subsection{Comparison}

\begin{proposition}\label{cor_compa}
There is a canonical commutative square:
$$\begin{xy}
\xymatrix{\Wald_\infty^\gd \ar[d]^{\rho_{\Wald_\infty^\gd}} \ar[rrr]^{\KR}
&&& \Sp^{C_2} \ar[d]^{\simeq}
\\
\Fun^{\prod}(\Span(\Set[C_2]),\Wald_\infty^\gd) \ar[rrr]^{\KR^{C_2}_*} &&& \Fun^{\prod}(\Span(\Set[C_2]),\Sp).}
\end{xy}$$

that restricts to commutative square below when we consider stable $\infty$-categories.
$$\begin{xy}
\xymatrix{\St^\gd \ar[d]^{\rho_{\St^\gd}} \ar[rrr]^{\KR}
&&& \Sp^{C_2} \ar[d]^{\simeq}
\\
\Fun^{\prod}(\Span(\Set[C_2]),\St^\gd) \ar[rrr]^{\KR^{C_2}_*} &&& \Fun^{\prod}(\Span(\Set[C_2]),\Sp).}
\end{xy}$$
\end{proposition}

\begin{proof} 
By \cite[Proposition 5.20., Remark 7.10.]{realKthHSV} the real $\infty$-category $\Wald_\infty^\gd$ is genuine preadditive. Furthermore, since $\Sp^{C_2}$ is genuine preadditive, we find that 
$$\KR \colon \Wald_\infty^\gd \to \Sp^{C_2} $$ also preserves finite coproducts and tensors with finite $C_2$-sets. Thus we can apply \cref{cor_form} to obtain the desired result.
\end{proof}

\begin{proof}[Proof of \cref{theorem_compsp}]

By \cref{equiv_qu_gd} there is an equivalence
$$\St^{\mathrm{gd}} \simeq \Cat_\infty^p$$
of presentable real $\infty$-categories.
By \cref{cor:Ktheory_coincide} we have that $$\KR^{C_2}: 
\St^{\mathrm{gd}} \to \Sp_{\geq 0}$$ factors as
$$ \St^{\mathrm{gd}}\simeq \Cat_\infty^p\xrightarrow{\GW} \Sp_{\geq 0}.$$

Hence by \cref{cor_compa} and \cref{prop_Mackey} there is a commutative diagram

$$\begin{xy}
\xymatrix{& \St^\gd \ar[ld]_\simeq \ar[d]^{\rho_{\St^\gd}} \ar[rrr]^{\KR}
&&& \Sp^{C_2} \ar[d]^{\simeq}
\\
\infty\Cat^p \ar[rd]_{\rho_{\Cat_\infty^p}} & \Fun^{\prod}(\Span(\Set[C_2]),\St^\gd) \ar[d]^{\simeq}\ar[rrr]^{\KR^{C_2}_*} &&& \Fun^{\prod}(\Span(\Set[C_2]),\Sp)\ar[d]^{=}
\\
& \Fun^{\prod}(\Span(\Set[C_2]),\Cat_\infty^p) \ar[rrr]^{\GW_*} &&& \Fun^{\prod}(\Span(\Set[C_2]),\Sp).}
\end{xy}$$
\end{proof}

\bibliographystyle{alpha}
\bibliography{add}

\end{document}